\documentclass[12pt]{amsart}
\usepackage[utf8]{inputenc}
\usepackage{amsmath}
\usepackage{amssymb}
\usepackage{amsthm}
\usepackage[margin=1.1in]{geometry}
\usepackage{csquotes}
\usepackage{xcolor}
\usepackage{hyperref}
\usepackage{enumerate}
\usepackage{comment}

\newtheorem{theorem}{Theorem}[section]

\newtheorem{lemma}[theorem]{Lemma}
\newtheorem{corollary}[theorem]{Corollary}
\newtheorem{definition}{Definition}
\newtheorem{proposition}[theorem]{Proposition}

\newtheorem{example}[theorem]{Example}

\newcommand{\F}{{\mathbb F}}

\newcommand{\Z}{{\mathbb Z}}

\newcommand{\GL}{\hbox{{\rm GL}}}
\newcommand{\K}{{\mathbb K}}

\newcommand{\fq}{{\mathbb F}_{q}}

\newcommand{\PG}{\mathrm{PG}}

\newcommand{\ZZZ}{\mathcal{Z}}

\newcommand{\rad}{\mathrm{rad}_q}

\usepackage{setspace}
\onehalfspacing

\title{Prime Power Residues and Blocking Sets}
\author[B. Mishra]{Bhawesh Mishra} \author[P. Santonastaso]{Paolo Santonastaso}
\date{}

\begin{document}

\begin{abstract}
Let $q$ be a fixed odd prime. We show that a finite subset $B$ of integers, not containing any perfect $q^{th}$ power, contains a $q^{th}$ power modulo almost every prime if and only if $B$ corresponds to a blocking set (with respect to hyperplanes) in $\PG(\F_{q}^{k})$. Here, $k$ is the number of distinct prime divisors of $q$-free parts of elements of $B$. As a consequence, the property of a subset $B$ to contain $q^{th}$ power modulo almost every prime $p$ is invariant under geometric $q$-equivalence defined by an element of the projective general linear group $\mathrm{PGL}(\mathbb{F}_{q}^{k})$. Employing this connection between two disparate branches of mathematics, Galois geometry and number theory, we classify, and provide bounds on the sizes of, minimal such sets $B$. 
\end{abstract}

\maketitle

\noindent
\textbf{Keywords:} Power-Residue; Blocking Sets; Local-Global Principle.\\
\textbf{MSC2020:}  51E21; 05B25; 11A15.

\section{Introduction}
In this article, we are primarily concerned with finite subsets $B$ of integers that contain a $q^{th}$ power modulo almost every prime $p$. Here, the phrase \textit{almost every prime} will denote all but finitely many primes and \textit{perfect $q^{th}$ power} will mean $q^{th}$ power of an integer. Any $B$ that already contains a perfect $q^{th}$ power is trivially such a set. The more interesting case is when $B$ does not contain any perfect $q^{th}$ power but still contains a $q^{th}$ power modulo almost every prime. We will call such sets \textit{locally $q^{th}$ power sets}. 

Our starting point is the following classical result on locally $2^{nd}$ power sets, which was first obtained by Fried in \cite{fried1969arithmetical}, and later rediscovered by Filaseta and Richman in \cite{filaseta1989sets}.

\begin{proposition}
Let $a_{1}, a_{2}, \ldots , a_{l}$ be finitely many nonzero integers. Then the following three conditions are equivalent:
\begin{enumerate}
    
    \item The set $\{a_{1}, a_{2}, \ldots, a_{l}\}$ contains a square modulo almost every prime.\vspace{1mm} 
    
    \item For every odd prime $p \nmid \prod_{j=1}^{l} a_{j}$, the set $\{a_{1}, a_{2}, \ldots, a_{l}\}$ contains a square modulo $p$.\vspace{1mm}
    
    \item There exists $T \subseteq \{1, 2, \ldots , l\}$ of odd cardinality such that $\prod_{j\in T} a_{j} $ is a perfect square.
    
\end{enumerate}
\label{FR}
\end{proposition}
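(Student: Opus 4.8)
The plan is to establish the cycle of implications $(3)\Rightarrow(2)\Rightarrow(1)\Rightarrow(3)$. The implication $(2)\Rightarrow(1)$ is immediate, since only finitely many primes divide $\prod_{j=1}^l a_j$. For $(3)\Rightarrow(2)$: suppose $\prod_{j\in T} a_j=c^2$ with $|T|$ odd and $c$ a nonzero integer, and let $p$ be an odd prime with $p\nmid\prod_j a_j$; then $p\nmid c$, so multiplicativity of the Legendre symbol gives $\prod_{j\in T}\left(\tfrac{a_j}{p}\right)=\left(\tfrac{c^2}{p}\right)=1$. If every $a_j$ with $j\in T$ were a nonsquare mod $p$, this product would equal $(-1)^{|T|}=-1$, a contradiction; hence some $a_j$ with $j\in T$ is a nonzero square modulo $p$.

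The substantive direction is $(1)\Rightarrow(3)$, which I would prove by contraposition: assuming no odd-sized subproduct $\prod_{j\in T}a_j$ is a perfect square, I will produce infinitely many primes modulo which \emph{every} $a_j$ is a nonsquare. The first step is to recast the hypothesis as $\mathbb{F}_2$-linear algebra. Let $v_j\in W:=\mathbb{Q}^\times/(\mathbb{Q}^\times)^2$ denote the square class of $a_j$; all the $v_j$ lie in the finite-dimensional $\mathbb{F}_2$-subspace spanned by $-1$ together with the primes dividing $\prod_j a_j$. The odd-sized subproducts realize exactly the set of affine $\mathbb{F}_2$-combinations of $v_1,\dots,v_l$, i.e.\ the affine hull $v_1+U$ with $U=\langle v_1+v_2,\dots,v_1+v_l\rangle$. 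Thus the failure of $(3)$ says precisely that $0$ does not lie in this affine hull; over $\mathbb{F}_2$ in finite dimension this yields a separating affine hyperplane, hence an $\mathbb{F}_2$-linear functional $\phi$ on $W$ with $\phi(v_j)=1$ for every $j$.

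It remains to realize $\phi$ arithmetically. Attached to any prime $p$ is the functional $\phi_p$ recording whether $\left(\tfrac{-1}{p}\right)$ and whether each $\left(\tfrac{\ell}{p}\right)$ (for $\ell$ prime) equals $+1$ or $-1$; by multiplicativity, $\left(\tfrac{a_j}{p}\right)=(-1)^{\phi_p(v_j)}$ whenever $p\nmid a_j$. So it suffices to find infinitely many primes $p$ with $\phi_p=\phi$. Prescribing the values of $\left(\tfrac{-1}{p}\right)$, $\left(\tfrac{2}{p}\right)$ and $\left(\tfrac{\ell}{p}\right)$ for the odd primes $\ell\mid\prod_j a_j$ is, by quadratic reciprocity and the Chinese Remainder Theorem, equivalent to placing $p$ in a suitable residue class coprime to $8\prod_\ell\ell$: the class of $p$ modulo $8$ controls $\left(\tfrac{-1}{p}\right)$ and $\left(\tfrac{2}{p}\right)$, and then each $\left(\tfrac{\ell}{p}\right)$ is adjusted independently via $p\bmod\ell$. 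Dirichlet's theorem then provides infinitely many primes $p$ in that class, each automatically coprime to $\prod_j a_j$, and for each of them $\left(\tfrac{a_j}{p}\right)=(-1)^{\phi(v_j)}=-1$ for all $j$, contradicting $(1)$.

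The main obstacle is precisely this last realization step: verifying that the reciprocity sign-corrections do not obstruct surjectivity of the map $p\mapsto\big(\left(\tfrac{-1}{p}\right),\left(\tfrac{2}{p}\right),(\left(\tfrac{\ell}{p}\right))_\ell\big)$ onto all sign-tuples — equivalently, the $\mathbb{F}_2$-linear independence of the characters $p\mapsto\left(\tfrac{a_j}{p}\right)$ when the $v_j$ are independent — and then feeding the resulting congruence into Dirichlet's theorem. Everything else, including the passage from the vector-space statement back to the divisibility statement in $(3)$, is routine bookkeeping.
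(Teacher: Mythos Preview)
The paper does not supply its own proof of this proposition: it is quoted in the introduction as a classical result of Fried (rediscovered by Filaseta--Richman) and serves only as background motivation for the $q^{\text{th}}$-power analogue (Proposition~\ref{Skalbalemma}). So there is no in-paper argument to compare against.

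That said, your proof is correct and is essentially the standard one. The easy implications $(3)\Rightarrow(2)\Rightarrow(1)$ are fine. For $(1)\Rightarrow(3)$ your $\mathbb{F}_2$-linear reformulation is exactly right: the odd-cardinality subproducts trace out the affine hull $v_1+U$ in $\mathbb{Q}^\times/(\mathbb{Q}^\times)^2$, and ``$0$ not in this affine hull'' is equivalent to $v_1\notin U$, which lets you extend $U$ to a hyperplane missing $v_1$ and hence produce the desired functional $\phi$ with $\phi(v_j)=1$ for all $j$. The realization of $\phi$ by primes via quadratic reciprocity, the supplementary laws, CRT, and Dirichlet is also sound; the worry you flag about ``reciprocity sign-corrections obstructing surjectivity'' is not a real obstruction, since the four residue classes mod $8$ hit all four sign-pairs for $\bigl(\tfrac{-1}{p}\bigr),\bigl(\tfrac{2}{p}\bigr)$, and once $p\bmod 8$ is fixed each odd $\ell$ can be handled independently by choosing $p\bmod\ell$.

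One stylistic remark: what you call ``the main obstacle'' is really the heart of the argument and deserves to be written out rather than signposted. As presented, the proposal reads more like a plan than a proof; the final paragraph should be absorbed into the body.
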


The analogous result for general powers was obtained by A. Schinzel and M. Skałba in \cite{SS} and is combinatorially quite complex in nature. The main result in \cite{SS} deals with a more general problem and considers subsets of rings of integers. We refer the readers to \cite[Theorems 1, 2]{SS} for the results obtained by Schinzel and Skałba. For prime powers, M. Skałba further simplified the result to obtain the following \cite{skalba2005sets}.

\begin{proposition}
Let $q$ be a prime and $B = \{ b_{1}, b_{2}, \ldots, b_{l}\}$ be a set of finitely many distinct non-zero integers. Then the following conditions are equivalent:
\begin{enumerate}

    \item The set $B$ contains a $q^{th}$ power modulo almost every prime. \vspace{1mm}
    
    \item For every prime $p \neq q$ and $p \nmid \prod_{j=1}^{l} b_{j}$, the set $B$ contains a $q^{th}$ power modulo $p$. \vspace{1mm}

    \item For each sequence of integers $\{c_{j}\}_{j=1}^{l}$, there exists a sequence of integers $\{f_{j}\}_{j=1}^{l}$ such that \begin{equation}
\sum_{j=1}^{l} f_{j} \not\equiv 0 (\text{ mod } q) \text{ and } \prod_{j=1}^{l} b_{j}^{c_{j}f_{j}} = d^{q} \text{ for some integer } d. \label{skalbalemmaequation}
\end{equation}
\end{enumerate}
\label{Skalbalemma}
\end{proposition}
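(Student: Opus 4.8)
The plan is to interpose, between the two number-theoretic conditions and the combinatorial one, a single linear-algebraic condition over $\mathbb{F}_q$, and then to check the implications around the resulting cycle.

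First I would discard the easy primes: if $p\neq q$ and $p\not\equiv 1\pmod q$ then $\gcd(q,p-1)=1$, so $x\mapsto x^q$ is a bijection of $\mathbb{F}_p$ and \emph{every} integer is a $q^{th}$ power modulo $p$; such primes impose nothing in (1) or (2). For a prime $p\equiv 1\pmod q$ with $p\nmid\prod_j b_j$, reduction modulo $p$ followed by $\mathbb{F}_p^\times\twoheadrightarrow\mathbb{F}_p^\times/(\mathbb{F}_p^\times)^q\cong\mathbb{F}_q$ gives a homomorphism $\langle b_1,\ldots,b_l\rangle\to\mathbb{F}_q$ that annihilates $q^{th}$ powers; it therefore descends to an element $\psi_p$ of the dual $V^*$ of the finite-dimensional $\mathbb{F}_q$-space $V:=\langle b_1,\ldots,b_l\rangle(\mathbb{Q}^\times)^q/(\mathbb{Q}^\times)^q$. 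Writing $\bar b_j$ for the image of $b_j$ in $V$, one has $b_j$ is a $q^{th}$ power modulo $p$ $\iff\psi_p(\bar b_j)=0$. So (2) is equivalent to ``$\psi_p$ vanishes at some $\bar b_j$, for every prime $p\equiv 1\pmod q$ with $p\nmid\prod_j b_j$'', and (1) to the same with ``every'' relaxed to ``all but finitely many''.

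Next I would reformulate (3) as
\[
(\ast)\qquad\text{every }\chi\in V^*\text{ vanishes at some }\bar b_j
\]
--- equivalently, every linear hyperplane of $V$ meets $\{\bar b_1,\ldots,\bar b_l\}$, i.e.\ these points form a blocking set in $\PG(V)$. Fix $(c_1,\ldots,c_l)\in\mathbb{F}_q^l$. After replacing each $f_j$ by a suitable representative modulo $q$ (which changes neither $\sum_j f_j\bmod q$ nor the $q^{th}$-power status of $\prod_j b_j^{c_j f_j}$, and allows one to take all $c_j f_j\geq 0$ so that a rational $q^{th}$ power is automatically an integral one), the equation $\prod_j b_j^{c_j f_j}=d^q$ becomes $\sum_j c_j f_j\,\bar b_j=0$ in $V$. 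Hence (3) holds for $(c_j)$ iff the functional $\sigma\colon(f_j)\mapsto\sum_j f_j$ is not identically zero on $\ker\phi$, where $\phi\colon\mathbb{F}_q^l\to V$ sends the $j^{th}$ standard basis vector to $c_j\bar b_j$; that is, (3) \emph{fails} for $(c_j)$ iff $\sigma\in(\ker\phi)^{\perp}=\operatorname{im}\phi^*$, iff there is $\chi\in V^*$ with $c_j\chi(\bar b_j)=1$ for all $j$. Letting $(c_j)$ vary, (3) fails for some choice iff there is $\chi\in V^*$ with $\chi(\bar b_j)\neq 0$ for all $j$ (take $c_j=\chi(\bar b_j)^{-1}$). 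Thus $(3)\iff(\ast)$.

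It then remains to establish $(\ast)\Rightarrow(2)\Rightarrow(1)\Rightarrow(\ast)$: the first implication is immediate (apply $(\ast)$ to $\chi=\psi_p$), the second is trivial, and for $(1)\Rightarrow(\ast)$ I would argue contrapositively. If $(\ast)$ fails, choose $\chi\in V^*$ with $\chi(\bar b_j)\neq 0$ for all $j$; it suffices to produce infinitely many primes $p$ with $\psi_p(\bar b_j)\neq 0$ for every $j$, for then $B$ contains no $q^{th}$ power modulo $p$ infinitely often. This realization step is the crux, and the place where I expect the real work to lie. I would carry it out by Kummer theory and Chebotarev: set $L=\mathbb{Q}(\zeta_q,\sqrt[q]{b_1},\ldots,\sqrt[q]{b_l})$; since $\gcd(q,q-1)=1$ forces $\mathbb{Q}^\times\cap(\mathbb{Q}(\zeta_q)^\times)^q=(\mathbb{Q}^\times)^q$, Kummer theory gives $\mathrm{Gal}(L/\mathbb{Q}(\zeta_q))\cong\mathrm{Hom}(V,\mu_q)$, under which --- for all but finitely many $p\equiv 1\pmod q$ --- the Frobenius of a prime of $L$ above $p$ corresponds to $\psi_p$. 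By the Chebotarev density theorem every element of $\mathrm{Gal}(L/\mathbb{Q}(\zeta_q))$ arises as such a Frobenius for a positive density of primes, up to the conjugation action of $\mathrm{Gal}(\mathbb{Q}(\zeta_q)/\mathbb{Q})$; but that action merely rescales a character by a unit of $\mathbb{F}_q$ and so preserves its vanishing locus on $\{\bar b_1,\ldots,\bar b_l\}$, so applying it to $\chi$ yields the required primes. (One could instead argue more concretely via Dirichlet's theorem on primes in arithmetic progressions together with the $q^{th}$-power reciprocity law, as in Ska\l ba's original treatment.) Together with $(3)\iff(\ast)$ this closes the cycle and gives the equivalence of (1), (2), and (3).
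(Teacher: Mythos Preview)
The paper does not prove this proposition; it is quoted from Ska\l ba \cite{skalba2005sets} as background in the introduction, so there is no in-paper argument to compare against. Your proof is correct and self-contained. The linear-algebraic reformulation $(\ast)$ that you introduce --- every functional on $V=\langle b_1,\dots,b_l\rangle(\mathbb{Q}^\times)^q/(\mathbb{Q}^\times)^q$ vanishes at some $\bar b_j$ --- is precisely the blocking-set characterization that the paper develops later as its Theorem~\ref{mainresult} (there phrased in the concrete coordinates coming from the prime factorization of the $b_j$, rather than in your invariant language). So your argument in effect proves the paper's main structural theorem en route, and then closes the loop with the number-theoretic input (Kummer theory and Chebotarev) that the paper instead imports from \cite{skalba2005sets} and \cite{mishra2023prime}. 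One small remark: your treatment tacitly uses that $-1\in(\mathbb{Q}^\times)^q$, i.e.\ that $q$ is odd, when passing between ``integer $q^{\text{th}}$ power'' and ``$q^{\text{th}}$ power in $\mathbb{Q}^\times$''; this is harmless since the paper restricts to odd $q$ throughout, but would need a separate sign bookkeeping for $q=2$.
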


This article focuses on extremal sizes and multiplicative structure of locally $q^{th}$ power sets. We bridge number theory and finite geometry by providing a geometric characterization of locally $q^{\text{th}}$ power sets. Specifically, we show that locally $q^{th}$ power sets $B$ correspond to blocking sets (with respect to hyperplanes) in projective space $\mathrm{PG}(\mathbb{F}_{q}^{k})$. Here, $k$ is equal to the number of distinct primes dividing the $q$-free parts of elements of $B$ (see Section \ref{section:main}). This geometric characterization allows us to introduce the notion of \textit{geometric $q$-equivalence}, which, we will establish, is an equivalence relation between locally $q^{th}$ power sets. This allows us to construct and recognize locally $q^{th}$ power sets from existing ones (see Section \ref{section:geomeq}). Moreover, our methods enable us to study subtler details of locally $q^{th}$ power sets such as their dimensions and lower and upper bounds on their cardinalities. In some cases, classifications of minimum size locally $q^{th}$ power sets are also provided (see Sections \ref{section:classification}, \ref{section:2dim} and \ref{section:3dim}).

\section{Preliminaries}\label{section:prelim}
Given a prime $p$, an integer $ \nu\geq 1$ and $a \in \mathbb{Z}$, we write $p^{\nu} \mid\mid a$, if $p^{\nu}$
is the highest power of $p$ dividing $a$. In this article, $q$ will always denote a fixed odd prime. 
\subsection{Reduction to the positive q-free equivalence class}
Let $B =\{b_{1}, b_{2}, \ldots, b_{\ell}\}$ be a finite subset of integers. For any prime $p$, an integer $b$ is a $q^{th}$ power modulo $p$ if and only if $|b|$ is a $q^{th}$ power modulo $p$. This is because $-1$ is always a perfect $q^{th}$ power. Therefore, we can replace the set $B$ by the set $\{|b_{j}|\}_{j=1}^{\ell}$.

Given a positive integer $b$ with unique prime factorization $\prod_{j=1}^{k} p_{j}^{a_{j}}$ $(a_{j}\geq 1)$, its \emph{$q$-free part} is denoted by $\text{rad}_{q}(b)$, and can be defined as \[\text{rad}_{q}(b) := \prod_{j=1}^{k} p_{j}^{a_{j} \text{mod } (q)}.\] For a prime $p \neq q$ and $p \nmid b$, $b$ is a $q^{th}$ power modulo $p$ if and only if $\text{rad}_{q}(b)$ is a $q^{th}$ power modulo $p$. Since there are only finitely many primes dividing $p$ that divide $\prod_{b\in B} b$, we can see that a set $B = \{b_{j}\}_{j=1}^{\ell} \subset\mathbb{Z}$ contains a $q^{th}$ power modulo almost every prime if and only if the set $\{\text{rad}_{q}(\lvert b_{j} \rvert)\}_{j=1}^{\ell}$ contains a $q^{th}$ power modulo almost every prime. Therefore, we introduce an equivalence relation $\sim_{q}$ on the set $\mathbb{Z}\setminus\{0\}$ defined as follows:

Let $b_{1}, b_{2}$ be two non-zero integers and $p_{1}, p_{2}, \ldots, p_{k}$ be all the primes dividing $b_{1}b_{2}$. Let $b_{1} = \pm \prod_{i=1}^{k} p_{i}^{\mu_{i}}$ $(\mu_{i} \geq 0)$ and $b_{2} = \pm \prod_{i=1}^{k} p_{i}^{\nu_{i}}$ $(\nu_{i} \geq 0)$. We will say that 
\begin{equation*}
    b_{1} \sim_{q} b_{2} \text{ if and only if } \mu_{i} \equiv \nu_{i} \hspace{1mm} (\text{mod } q) \text{ for every } 1 \leq i \leq k.
\end{equation*}
$\sim_{q}$ is clearly an equivalence relation, where the equivalence class containing an integer $b$ will be denoted by $[b]_{q}$. Let $\mathcal{Z}$ be the set of equivalence classes and \[\pi_{q}: \mathbb{Z}\setminus\{0\} \rightarrow \mathcal{Z}\] be the canonical map that takes an integer $b$ to its equivalence class $[b]_{q}$. 
Note that $b \sim_{q} \text{rad}_{q}(|b|)$ for every integer $b$, and therefore we will often identify $[b]_{q}$ with $\text{rad}_{q}(|b|)$. 

Therefore, to study whether a given finite subset $B = \{b_{j}\}_{j=1}^{\ell} \subset\mathbb{Z}\setminus\{0\}$ is a locally $q^{th}$ power set, it suffices to investigate whether $\pi_{q}(B) = \{\text{rad}_{q}(|b_{j}|)\}_{j=1}^{\ell}\subset\mathcal{Z}$ is a locally $q^{th}$ power set. Moreover, if $B$ is a locally $q^{th}$ power set, then every set $A$, with $A \supset B$, is also a locally $q^{th}$ power set. Keeping this in mind, we introduce the following definition:

\begin{definition}
An element $b$ of a locally $q^{th}$ power set $B$ is said to be \textbf{essential} if the set $B\setminus\{b\}$ is no longer a locally $q^{th}$ power set. We will say that a locally $q^{th}$ power set $B$ is minimal if every element of $B$ is essential. 
\end{definition}

Clearly, if $B$ has two distinct elements $c$ and $d$ such that $c \sim_{q} d$, then $B$ cannot be minimal. Therefore, we have the following immediate assertion.
\begin{lemma} \label{lem:sizeqfree}
If a finite subset $B$ of non-zero integers is a minimal locally $q^{th}$ power set, then $|B| = |\pi_{q}(B)|$. \label{basicminimal}
\end{lemma}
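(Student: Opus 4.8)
The statement is essentially the contrapositive of the observation recorded just above it: if $B$ is minimal, then no two distinct elements of $B$ can be $\sim_q$-equivalent, and this is precisely the assertion $|B| = |\pi_q(B)|$.

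The plan is as follows. First I would unwind the two quantities. The map $\pi_q$ restricted to $B$ is surjective onto $\pi_q(B)$ by definition, so $|\pi_q(B)| \le |B|$ always, with equality if and only if $\pi_q|_B$ is injective, i.e. if and only if there are no two distinct $c, d \in B$ with $[c]_q = [d]_q$, equivalently $c \sim_q d$. So it suffices to show: if $B$ is a minimal locally $q^{th}$ power set, then $\pi_q|_B$ is injective.

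Second, I would argue by contraposition. Suppose $c, d \in B$ are distinct with $c \sim_q d$. I claim $c$ is not essential, i.e. $B \setminus \{c\}$ is still a locally $q^{th}$ power set; this contradicts minimality. To see the claim, recall from the discussion preceding Proposition \ref{Skalbalemma} (and the reduction paragraph) that for every prime $p \ne q$ with $p \nmid \prod_{b \in B} b$, an integer $b$ is a $q^{th}$ power mod $p$ if and only if $\mathrm{rad}_q(|b|)$ is, and that $c \sim_q d$ forces $\mathrm{rad}_q(|c|) = \mathrm{rad}_q(|d|)$. Hence for all but finitely many primes $p$, $c$ is a $q^{th}$ power mod $p$ if and only if $d$ is. Now take any prime $p$ outside the finite bad set for $B$ (the primes equal to $q$ or dividing $\prod_{b\in B} b$, union the finitely many exceptional primes witnessing that $B$ is \emph{locally} a $q^{th}$ power set rather than one modulo \emph{every} good prime). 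For such $p$ some element of $B$ is a $q^{th}$ power mod $p$; if that element is $c$, then $d \in B \setminus \{c\}$ is also a $q^{th}$ power mod $p$; otherwise the witnessing element already lies in $B \setminus \{c\}$. Either way $B \setminus \{c\}$ contains a $q^{th}$ power mod $p$ for almost every prime $p$, so $B \setminus \{c\}$ is a locally $q^{th}$ power set. (One should note $B \setminus \{c\}$ still contains no perfect $q^{th}$ power, since $B$ did not.) This contradicts the essentiality of $c$, proving the lemma.

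I do not anticipate a genuine obstacle here; the only point requiring a little care is bookkeeping of the various finite exceptional sets — making sure that "almost every prime" is preserved when we pass from $B$ to $B \setminus \{c\}$ and when we invoke the $\mathrm{rad}_q$-invariance — but since all the excluded sets are finite and their union is finite, this is routine. The lemma is really just Skałba's reduction (Proposition \ref{Skalbalemma}, condition (2), together with the $q$-free part reduction) packaged as a statement about cardinalities.
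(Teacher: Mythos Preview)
Your proof is correct and follows exactly the approach the paper takes: the paper simply observes, in the sentence preceding the lemma, that two distinct $\sim_q$-equivalent elements preclude minimality and declares the lemma an ``immediate assertion'' without further argument. Your write-up is just a careful unpacking of that one sentence, with the bookkeeping on finite exceptional primes made explicit; nothing is missing and nothing differs in substance.
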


Following is an example of a minimal locally $3^{rd}$ power set. 

\begin{example}
Let $q = 3$, $p_{1}, p_{2}$ be two distinct primes and consider the set \[ B = \{ p_{1}, p_{2}, p_{1}p_{2}, p_{1}p_{2}^{2} \}. \] One can easily see that the set $B$ is a locally $3^{rd}$ power set. If neither $p_{1}$ nor $p_{2}$ is cube modulo $p$ then, two cases arise in $G = \mathbb{F}_{p}^{\times}/\mathbb{F}_{p}^{\times 3}$:
\begin{enumerate}
    \item Both $p_{1}$ and $p_{2}$ are the same non-identity element in $G$. In this case, $p_{1}p_{2}^{2}$ is a cube modulo $p$.

    \item If not, $p_{1}$ and $p_{2}$ are distinct in $G$, in which case, $p_{1}p_{2}$ is a cube modulo $p$. 
\end{enumerate}
In either case, $B$ is a locally $3^{rd}$ power set since it contains a cube modulo $p$ for every prime $p\not\in\{3, p_{1}, p_{2}\}$. Since every locally $3^{rd}$ power set has at least four elements (for instance, see \cite[Corollary 1, pp.10]{mishra2023prime}), it also follows that $B$ is minimal locally $3^{rd}$ power set. In fact, we will establish in this article that all minimal locally $q^{th}$ power sets are of the form \[ C := \{ p_{1}, p_{2}, p_{1}p_{2}, p_{1}p_{2}^{2}, \ldots, p_{1}p_{2}^{q-1}\} \] up to a suitable equivalence. The equivalence is needed since, for instance, any element in $C$ could be modified by a perfect $q^{th}$ power multiple and the resulting set would still remain a locally $q^{th}$ power set. 
\end{example}

\subsection{Blocking sets}
Let $\K$ be a field and $V$ be a vector space over $\K$. Let $\PG(V,\K)$ be the projective space defined over $\K$ by $V$. In this article, $\K$ will always be the Galois field $\mathbb{F}_{q}$ of order $q$. Therefore, we will abbreviate $\PG(V, \mathbb{F}_{q})$ as $\PG(V)$.
\begin{definition}
Let $k \geq 2$. A \textbf{blocking set} $\mathcal{S}$ (with respect to hyperplanes) of $\PG(\mathbb{F}_{q}^{k})$ is a collection of points that meets every hyperplane in at least one point. \label{defnblocking} 
\end{definition}

Any line in $\PG(\mathbb{F}_{q}^{k})$ is a blocking set, and any blocking set that contains a line will be called a \textit{trivial} blocking set. Similar to the case of locally $q^{th}$ power set, we define minimal blocking sets as follows.

\begin{definition}
A point $\mathcal{P}$ of a blocking set $\mathcal{S}$ of $\PG(\F_q^k)$ is said to be \textit{essential} if $\mathcal{S} \setminus \{\mathcal{P}\}$ is no longer a blocking set,
i.e. there is a hyperplane of $\PG(\F_q^k)$ meeting $\mathcal{S}$ in $\mathcal{P}$ only. Hence a blocking set is \textit{minimal} if and only if every point is essential. Equivalently a blocking set is minimal if it does not properly contain a blocking set.  
\end{definition}
Readers can also consult \cite{blokhuis2011blocking} for an excellent survey on the topic.
We will say that a blocking set $\mathcal{S}$ of $\PG(\F_{q}^{k})$ is $r$-dimensional if the projective subspace of $\PG(\F_{q}^{k})$ generated by $\mathcal{S}$ has dimension $r$, In the case $r=2$, we say that $\mathcal{S}$ is planar, i.e., if the projective subspace generated by $\mathcal{S}$ is a plane of $\PG(\F_{q}^{k})$.
We note that if $W$ is a subspace of $\F_{q}^{k}$, then any blocking set of $\PG(W)$ is also a blocking set of $\PG(\F_{q}^{k})$.

\section{Blocking Sets and Locally $q^{th}$-Power Sets} \label{section:main}
Before we can characterize locally $q^{th}$ power sets in terms of blocking sets, we will need a few notation. 

\subsection{Point set associated to a finite subset of integers}\label{pointset}
Let $B = \{b_{j}\}_{j=1}^{\ell}$ be a finite subset of non-zero integers not containing any perfect $q^{th}$ power. Let $p_{1} < p_{2} < \ldots < p_{k}$ be all the distinct primes that divide any element of $\pi_{q}(B)$. For every $1 \leq i \leq k$ and for every $1 \leq j \leq \ell$, let $\nu_{ij}\geq 0$ be such that $p_{i}^{\nu_{ij}}\mid\mid \text{rad}_{q}(\lvert b_{j}\rvert)$. For every $1 \leq j \leq \ell$, let $\mathcal{P}_{j} = \langle (\nu_{1j}, \nu_{2j}, \ldots, \nu_{kj})\rangle \in \PG(\F_{q}^{k})$. The set $\{\mathcal{P}_{j}\}_{j=1}^{\ell} \subset\PG(\F_{q}^{k})$ will be called the point set (in $\PG(\F_{q}^{k})$) associated to $B \subset\mathbb{Z}\setminus\{0\}$. Our first result is the following:  

\begin{theorem}
Let $B$ be a finite subset of non-zero integers not containing a perfect $q^{th}$ power and let $k$ be the total number of primes dividing $\prod_{b\in B} \text{rad}_{q}(\lvert b_{j} \rvert)$. Then, the following two statements are equivalent:
\begin{enumerate}
        \item The set $B$ is a locally $q^{th}$ power set. 
        
        \item The point set $\{\mathcal{P}_{j}\}_{j=1}^{\ell}$, associated to $B$, forms a blocking set in $\PG(\F_{q}^{k})$. 
    \end{enumerate}
    \label{mainresult}
\end{theorem}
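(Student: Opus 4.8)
The plan is to use the combinatorial characterization of Skałba (Proposition \ref{Skalbalemma}, item (3)) as the bridge, and translate it into the language of linear algebra over $\F_q$. First I would set up the dictionary precisely: each element $b_j$ of $B$ contributes a vector $v_j = (\nu_{1j},\ldots,\nu_{kj}) \in \F_q^k$, where $\nu_{ij}$ is the exponent of $p_i$ in $\rad(|b_j|)$ reduced modulo $q$. Since $B$ contains no perfect $q^{th}$ power, each $\rad(|b_j|) \neq 1$, so each $v_j \neq 0$ and hence determines a genuine point $\mathcal P_j = \langle v_j\rangle \in \PG(\F_q^k)$. The key observation is that a product $\prod_j b_j^{e_j}$ is a perfect $q^{th}$ power if and only if, for every $i$, the exponent $\sum_j e_j \nu_{ij} \equiv 0 \pmod q$, i.e. if and only if $\sum_j \bar e_j v_j = 0$ in $\F_q^k$, where $\bar e_j = e_j \bmod q$.

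Next I would reinterpret Skałba's condition (3). Given a sequence $\{c_j\}$, we seek $\{f_j\}$ with $\sum_j f_j \not\equiv 0 \pmod q$ and $\prod_j b_j^{c_j f_j}$ a perfect $q^{th}$ power. By the observation above, writing $\bar c_j, \bar f_j$ for reductions mod $q$, this says: $\sum_j \bar c_j\,\bar f_j\, v_j = 0$ and $\sum_j \bar f_j \neq 0$ in $\F_q$. So condition (3) becomes: for every $(\bar c_1,\ldots,\bar c_l) \in \F_q^l$ there exists $(\bar f_1,\ldots,\bar f_l) \in \F_q^l$ with $\sum_j \bar f_j \neq 0$ and $\sum_j \bar f_j (\bar c_j v_j) = 0$. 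Now I would take the contrapositive and relate the existence/non-existence of such $\bar f$ to hyperplanes. Think of a linear functional $\varphi$ on $\F_q^k$ as giving a hyperplane $H_\varphi = \ker \varphi$ (when $\varphi \neq 0$). The point $\mathcal P_j$ lies on $H_\varphi$ iff $\varphi(v_j) = 0$. So $\{\mathcal P_j\}$ fails to be a blocking set iff there is a nonzero $\varphi$ with $\varphi(v_j) \neq 0$ for all $j$.

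The heart of the argument is then the equivalence: $\{\mathcal P_j\}$ is \emph{not} a blocking set $\iff$ Skałba's condition (3) \emph{fails}. For the ($\Rightarrow$) direction of this, suppose $\varphi \neq 0$ satisfies $\varphi(v_j) \neq 0$ for all $j$; set $\bar c_j := \varphi(v_j)^{-1} \in \F_q^\times$. Then for any $\bar f$ with $\sum_j \bar f_j(\bar c_j v_j) = 0$, applying $\varphi$ gives $\sum_j \bar f_j \varphi(\bar c_j v_j) = \sum_j \bar f_j = 0$, so condition (3) fails for this choice of $\{c_j\}$. Conversely, suppose condition (3) fails, witnessed by some $\{\bar c_j\}$: for all $\bar f$ with $\sum_j \bar f_j(\bar c_j v_j) = 0$ we have $\sum_j \bar f_j = 0$. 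First note all $\bar c_j \neq 0$ (if some $\bar c_j = 0$, take $\bar f = e_j$, the $j$-th standard basis vector: then $\sum \bar f_i (\bar c_i v_i) = 0$ but $\sum \bar f_i = 1 \neq 0$, contradiction). The failure says: the linear functional $\bar f \mapsto \sum_j \bar f_j$ on $\F_q^l$ vanishes on the subspace $U = \{\bar f : \sum_j \bar f_j(\bar c_j v_j) = 0\}$. Since $U$ is the kernel of the map $\bar f \mapsto \sum_j \bar f_j(\bar c_j v_j) \in \F_q^k$, whose image is $W := \operatorname{span}\{\bar c_j v_j\} = \operatorname{span}\{v_j\}$, the "all-ones" functional factors through $\F_q^l/U \cong W$; that is, there is a functional $\psi$ on $W$ with $\psi(\bar c_j v_j) = 1$ for all $j$. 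Extend $\psi$ to a functional $\varphi$ on $\F_q^k$; then $\varphi(v_j) = \bar c_j^{-1} \neq 0$ for all $j$, and $\varphi \neq 0$, so $\{\mathcal P_j\}$ misses the hyperplane $\ker \varphi$. Combining with Proposition \ref{Skalbalemma} (which equates being a locally $q^{th}$ power set with condition (3)) gives the theorem.

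The main obstacle I anticipate is bookkeeping around the case $k = 1$ versus $k \geq 2$ (blocking sets are only defined for $k \geq 2$ in Definition \ref{defnblocking}): when $k = 1$ every nonzero $v_j$ spans all of $\F_q$, there are no hyperplanes, and $B$ is automatically a locally $q^{th}$ power set since some product of the $b_j$ is already a perfect $q^{th}$ power — one should either note this degenerate case separately or observe it is vacuous. A second, more delicate point is making sure the reduction "$\prod b_j^{e_j}$ is a $q^{th}$ power $\iff \sum \bar e_j v_j = 0$" is airtight given that we replaced $B$ by $\pi_q(B)$ and the $v_j$ record only exponents mod $q$ of the $q$-free parts; this is where the preliminary reductions of Section \ref{section:prelim} (that passing to $\rad(|b_j|)$ preserves the property, and that we may assume $B = \pi_q(B)$ with all points distinct) must be invoked cleanly. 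Everything else is the standard duality between "a set of points blocks all hyperplanes" and "no linear functional is nonzero on all of them," which I expect to be routine.
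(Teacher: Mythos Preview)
Your argument is correct and takes a genuinely different route from the paper's. The paper factors the equivalence through an intermediate characterization: it first invokes Proposition~\ref{th:connectioncoveringqpowerset} (proved in \cite{mishra2023prime} as a reformulation of Proposition~\ref{Skalbalemma}) to say that $B$ is a locally $q^{th}$ power set if and only if the associated hyperplanes $\{\mathcal{H}_j\}$ cover $\PG(\F_q^k)$, and then applies the point--hyperplane duality $\perp$ (packaged as Lemma~\ref{coveringtoblocking}) to pass from the hyperplane covering to its dual, a blocking set. You instead work directly from Sk\l aba's condition~(3), translating it into linear algebra over $\F_q$ and arriving at the blocking-set condition without the covering intermediary. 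Your proof is therefore more self-contained, since it does not quote the external result from \cite{mishra2023prime}, and it makes the duality visible through the ``factor the all-ones functional through $\F_q^{\ell}/U \cong W$'' step; the paper's approach has the virtue of isolating the purely geometric duality as a reusable lemma independent of the arithmetic setup.

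One small correction to your side remark on the degenerate case $k=1$: it goes the other way. When $k=1$, choose $c_j \equiv \nu_{1j}^{-1} \pmod q$ (possible since no $b_j$ is a perfect $q^{th}$ power, so each $\nu_{1j} \not\equiv 0$); then $\sum_j f_j c_j \nu_{1j} \equiv \sum_j f_j \pmod q$, so the two requirements in \eqref{skalbalemmaequation} are mutually exclusive and condition~(3) \emph{fails}. Hence for $k=1$ the set $B$ is never a locally $q^{th}$ power set, in agreement with blocking sets being defined only for $k \geq 2$ (Definition~\ref{defnblocking}). Your main argument for $k \geq 2$ is unaffected.
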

In order to prove Theorem \ref{mainresult}, we will use a characterization of locally $q^{th}$ power sets in terms of linear hyperplane coverings of vector spaces, obtained by the first author in \cite{mishra2023prime}.

Let $V$ be a vector space of dimension $\geq 2$ over a field $\mathbb{K}$. A collection $\{W_{i}\}_{i\in I}$ of proper $\mathbb{K}$-subspaces is said to be a \textit{linear covering} if $V = \bigcup_{i \in I} W_{i}$.

\subsection{Hyperplanes Associated with a Finite Subset}\label{hyperplanes}
Let $B = \{b_{j}\}_{j=1}^{\ell}$ be a finite subset of non-zero integers. Let $p_{1}, p_{2}, \ldots, p_{k}$ be all the primes that divide any element of $\pi_{q}(B)$. For every $1 \leq i \leq k$ and for every $1 \leq j \leq \ell$, let $\nu_{ij}\geq 0$ be such that $p_{i}^{\nu_{ij}}\mid\mid \text{rad}_{q}(b_{j})$. For every $1 \leq j \leq \ell$, define the $\mathcal{H}_{j}$ as, 
\[\mathcal{H}_{j} := \big\{ (x_{i})_{i=1}^{k} \in\mathbb{F}_{q}^{k} : \sum_{i=1}^{k} \nu_{ij} x_{i} = 0 \big\}.\] We will say that $\{\mathcal{H}_{j}\}_{j=1}^{\ell}$ is the set of hyperplanes (in $\mathbb{F}_{q}^{k}$) associated with the set $B$. The Proposition \ref{Skalbalemma} was further simplified in terms of linear covering in \cite{mishra2023prime}. 

\begin{proposition}  \label{th:connectioncoveringqpowerset} 
Let $q$ be an odd prime, $B = \{b_{j}\}_{j=1}^{\ell}$ be a finite subset of non-zero integers that does not contain a perfect $q^{th}$ power. Then, the following two statements are equivalent. 
\begin{enumerate}
    \item $B$ is a locally $q^{th}$ power set. 

    \item \[\bigcup_{j=1}^{\ell} \PG\big(\mathcal{H}_{j}\big) = \PG(\mathbb{F}_{q}^{k}\big) \Bigg( \text{ or equivalently } \bigcup_{j=1}^{\ell} \mathcal{H}_{j} = \F_{q}^{k}, \Bigg)\] where $\{\mathcal{H}_{j}\}_{j=1}^{\ell}$ is the set of hyperplanes associated with $B$.
\end{enumerate}
\end{proposition}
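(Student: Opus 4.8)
The plan is to establish the equivalence (1) $\iff$ (2) by routing both sides through condition (3) of Proposition \ref{Skalbalemma} (Skałba's criterion), which is already equivalent to (1). Thus it suffices to show that Skałba's condition (3) holds if and only if the associated hyperplanes cover $\F_q^k$. The whole argument amounts to translating the multiplicative, number-theoretic statement into linear algebra over $\F_q = \Z/q\Z$, followed by a standard duality computation.

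First I would record the arithmetic dictionary. Since $q$ is odd we may write $b_j = \pm\,\mathrm{rad}_q(|b_j|)\,m_j^q$ for integers $m_j$, so $\prod_j b_j^{c_j f_j}$ is a perfect $q^{th}$ power if and only if $\prod_j \mathrm{rad}_q(|b_j|)^{c_j f_j}$ is, the sign and the factors $m_j^q$ contributing only $q^{th}$ powers. As $\mathrm{rad}_q(|b_j|) = \prod_{i=1}^k p_i^{\nu_{ij}}$, this happens exactly when $\sum_j c_j f_j \nu_{ij} \equiv 0 \pmod q$ for every $i$. Writing $\nu_{\cdot j} = (\nu_{1j},\ldots,\nu_{kj}) \in \F_q^k$ and regarding $\{c_j\},\{f_j\}$ as vectors in $\F_q^\ell$ (only residues mod $q$ matter), this is the single vector equation $\sum_j c_j f_j\, \nu_{\cdot j} = 0$ in $\F_q^k$. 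Note each $\nu_{\cdot j} \neq 0$ since $b_j$ is not a perfect $q^{th}$ power, so $\mathcal{H}_j$ is a genuine hyperplane with normal $\nu_{\cdot j}$.

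Next comes the core duality step. Fix $c = \{c_j\} \in \F_q^\ell$ and let $M_c$ be the $k \times \ell$ matrix with entries $c_j \nu_{ij}$. By the dictionary, the set of $f$ realizing a $q^{th}$ power (for this $c$) is exactly $\ker M_c$, and Skałba's requirement asks for some $f \in \ker M_c$ with $\mathbf{1}^T f \neq 0$, where $\mathbf{1} = (1,\ldots,1)$. Such $f$ fails to exist precisely when $\mathbf{1}^T$ vanishes on $\ker M_c$, i.e. when $\mathbf{1} \in (\ker M_c)^\perp = \mathrm{rowspace}(M_c)$. Expanding $\mathbf{1} = M_c^T \lambda$ componentwise yields $c_j(\nu_{\cdot j}\cdot\lambda) = 1$ for all $j$. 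Hence condition (3) \emph{fails for some} $c$ if and only if there exist $c$ and $\lambda \in \F_q^k$ with $c_j(\nu_{\cdot j}\cdot\lambda) = 1$ for all $j$; and this is solvable in $c$ exactly when $\nu_{\cdot j}\cdot\lambda \neq 0$ for all $j$, in which case one takes $c_j = (\nu_{\cdot j}\cdot\lambda)^{-1}$.

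Finally I would read off the geometry: the condition ``$\exists\,\lambda \in \F_q^k$ with $\nu_{\cdot j}\cdot\lambda \neq 0$ for all $j$'' says exactly that $\lambda$ lies outside every $\mathcal{H}_j$, i.e. $\bigcup_j \mathcal{H}_j \neq \F_q^k$. Negating, Skałba's condition (3) holds if and only if $\bigcup_j \mathcal{H}_j = \F_q^k$; combined with Proposition \ref{Skalbalemma} this gives (1) $\iff$ (2). The parenthetical projective reformulation is immediate, since each $\mathcal{H}_j$ contains $0$ and is a union of lines through the origin, so $\bigcup_j \mathcal{H}_j = \F_q^k$ is equivalent to $\bigcup_j \PG(\mathcal{H}_j) = \PG(\F_q^k)$. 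The main obstacle is bookkeeping the two nested quantifiers in (3): one must check carefully that ``for every $c$ there is a good $f$'' negates to ``there is a bad $c$ for which every $f$ is bad,'' and that the passage to the single vector $\lambda$ is valid in both directions. The only genuinely nontrivial ingredient is the duality identity $(\ker M_c)^\perp = \mathrm{rowspace}(M_c)$ over $\F_q$, which is standard.
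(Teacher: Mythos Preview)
Your argument is correct. The arithmetic dictionary, the reduction to $\mathbf{1}\in\mathrm{rowspace}(M_c)$ via $(\ker M_c)^\perp=\mathrm{rowspace}(M_c)$, and the final passage from ``$\exists\,\lambda$ with all $\nu_{\cdot j}\cdot\lambda\neq 0$'' to ``$\bigcup_j\mathcal{H}_j\neq\F_q^k$'' are all sound; you also handled the quantifier negation carefully, and the implicit observation that any ``bad'' $c$ must have all $c_j\not\equiv 0\pmod q$ (since otherwise $e_j\in\ker M_c$ gives a good $f$) is contained in your equation $c_j(\nu_{\cdot j}\cdot\lambda)=1$.

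As for comparison with the paper: there is nothing to compare. The paper does not prove this proposition; it quotes it from \cite{mishra2023prime} (the first author's earlier work) and uses it as a black box on the way to Theorem~\ref{mainresult}. Your write-up therefore supplies a self-contained derivation, via Proposition~\ref{Skalbalemma}, that the present paper omits. This is presumably close in spirit to what \cite{mishra2023prime} does---that paper also starts from Skałba's criterion---so your approach is the natural one, but within this paper it counts as new content rather than a reworking of an existing proof.
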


\subsection{Proof of Theorem \ref{mainresult}}

From now on, we consider the projective space $\PG(\F_q^k)$ together with a coordinatization $(x_1,\ldots,x_k)$. We also consider the following duality $\perp$ on $\PG(\F_q^k)$. To any projective subspace $\Omega=\PG(W)$, we associate $\Omega^{\perp}=\PG(W^{\perp})$, where \[W^{\perp}=\{(u_1,\ldots,u_k) \in \fq^k \colon \sum_{i=1}^k u_iw_i=0, \mbox{for each }(w_1,\ldots,w_k) \in W\}.\]In particular, to a hyperplane $\mathcal{H}$ of $\PG(\F_q^k)$ defined by the equation $v_1X_1+\cdots+v_kX_k=0$ corresponds the point $\mathcal{H}^{\perp}=\langle (v_1,\ldots,v_k) \rangle_{\fq}$, and conversely. The duality $\perp$ gives is a 1-to-1 correspondence between points and hyperplanes of $\PG(\F_q^k)$ and preserves the incidence, i.e. if $\mathcal{P}$ is a point and $\mathcal{H}$ is a hyperplane of $\PG(\F_q^k)$ then $\mathcal{P} \in \mathcal{H}$ if and only if $\mathcal{H}^{\perp} \in \mathcal{P}^{\perp}$. The dual of a linear covering, with hyperplanes, is a blocking set. We include a statement and its proof for the sake of completeness. 

\begin{lemma}
Let $\ell \in\mathbb{N}$ and for every $1 \leq j \leq \ell$, let $\mathcal{H}_{j}$ be the hyperplane of $\PG(\F_{q}^{k})$ defined as \[\big\{(x_{i})_{i=1}^{k} : \sum_{i=1}^{k} \nu_{ij}x_{i} = 0 \big\}.\] Then, the following two statements are equivalent:
\begin{enumerate}
    \item  $\bigcup_{j=1}^{\ell} \mathcal{H}_{j} = \PG(\F_{q}^{k})$.\\
    \item The set $\mathcal{S} = \{\mathcal{H}_{j}^{\perp}\}_{j=1}^{\ell}$ forms a blocking set in $\PG(\F_{q}^{k})$.   
\end{enumerate}
\label{coveringtoblocking}
\end{lemma}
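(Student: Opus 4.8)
The plan is to exploit the properties of the duality $\perp$ established just before the statement, namely that $\perp$ is a bijection between points and hyperplanes that reverses (and hence, being an involution, also preserves in the appropriate sense) incidence: for a point $\mathcal P$ and a hyperplane $\mathcal H$, $\mathcal P \in \mathcal H \iff \mathcal H^\perp \in \mathcal P^\perp$. Since each $\mathcal H_j$ has equation $\sum_i \nu_{ij} x_i = 0$, we have $\mathcal H_j^\perp = \langle(\nu_{1j},\ldots,\nu_{kj})\rangle_{\fq}$, and $\perp$ is an involution, so $(\mathcal H_j^\perp)^\perp = \mathcal H_j$.

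The proof is a short chain of equivalences. First I would unwind statement (1): $\bigcup_{j=1}^\ell \mathcal H_j = \PG(\F_q^k)$ means that every point $\mathcal P$ of $\PG(\F_q^k)$ lies in $\mathcal H_j$ for some $j$. Next I would dualize pointwise: given an arbitrary hyperplane $\mathcal K$ of $\PG(\F_q^k)$, write $\mathcal K = \mathcal P^\perp$ where $\mathcal P = \mathcal K^\perp$ is the point dual to it (using that $\perp$ is a bijection, every hyperplane arises this way). By the incidence-preservation property, $\mathcal P \in \mathcal H_j \iff \mathcal H_j^\perp \in \mathcal P^\perp = \mathcal K$. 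Therefore the statement ``every point $\mathcal P$ lies on some $\mathcal H_j$'' is equivalent to ``for every hyperplane $\mathcal K$ there is some $j$ with $\mathcal H_j^\perp \in \mathcal K$'', which is exactly the assertion that $\mathcal S = \{\mathcal H_j^\perp\}_{j=1}^\ell$ meets every hyperplane, i.e. statement (2). Running the chain in both directions gives the equivalence; I would present it as a single displayed string of ``$\iff$''s, quantifying over all points on one side and all hyperplanes on the other.

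I do not expect a genuine obstacle here — the content is entirely packaged in the duality already developed in the paragraph preceding the lemma. The only points requiring a little care are: (a) making sure the ambient space is fixed with the chosen coordinatization so that $\mathcal H_j^\perp$ is literally the point $\langle(\nu_{1j},\ldots,\nu_{kj})\rangle_{\fq}$; (b) observing that $\perp$, being induced by the standard symmetric bilinear form on $\fq^k$, is an involution, so that both the map $\mathcal H \mapsto \mathcal H^\perp$ from hyperplanes to points and its inverse are available, ensuring the quantifier ``for every hyperplane $\mathcal K$'' can be replaced by ``for every point $\mathcal P = \mathcal K^\perp$'' without loss; and (c) a harmless degenerate-case check that $\mathcal S$ as written is a set of points (repetitions among the $\mathcal H_j^\perp$ are irrelevant to the blocking property). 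Modulo these bookkeeping remarks, the proof is three lines.
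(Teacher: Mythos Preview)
Your proposal is correct and follows essentially the same approach as the paper: both arguments dualize via $\perp$, using the bijection between points and hyperplanes together with the incidence property $\mathcal{P}\in\mathcal{H}\iff\mathcal{H}^{\perp}\in\mathcal{P}^{\perp}$, to translate ``every point lies on some $\mathcal{H}_j$'' into ``every hyperplane meets some $\mathcal{H}_j^{\perp}$''. The only cosmetic difference is that you package it as a single chain of equivalences, whereas the paper writes out the two implications separately.
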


\begin{proof}
Let $\mathcal{H}$ be a hyperplane of $\PG(\F_{q}^{k})$. We need to show that $\mathcal{H}$ intersects the set $\mathcal{S}$. Since $\mathcal{P}=\mathcal{H}^{\perp} \in\PG(\F_{q}^{k}) = \bigcup_{j=1}^{\ell} \mathcal{H}_{j}$, we have that $\mathcal{P} \in \mathcal{H}_{j_{0}}$ for some $1 \leq j_{0} \leq \ell$. Since $\perp$ preserves the incidences between points and hyperplanes, $\mathcal{P}\in\mathcal{H}_{j_{0}}$ implies $\mathcal{H}_{j_{0}}^{\perp} \in \mathcal{P}^{\perp} = \mathcal{H}$, i.e., $\mathcal{H}_{j_{0}} \in \big(\mathcal{H} \cap\mathcal{S}\big)$. 

For the proof of the other direction, let $\mathcal{P}$ be a point in $\PG(\F_{q}^{k})$. We want to show that $\mathcal{P}\in\mathcal{H}_{j_{0}}$ for some $1 \leq j_{0} \leq \ell$. Since $\mathcal{S}$ is a blocking set, the hyperplane $\mathcal{H} = P^{\perp}$ in $\PG(\F_{q}^{k})$ must intersect $\mathcal{S}$. In other words, there exists $1 \leq j_{0} \leq \ell$ such that $H_{j_{0}}^{\perp} \in \mathcal{H} = \mathcal{P}^{\perp}$, which gives us $\mathcal{P} \in \mathcal{H}_{j_{0}}$. 
\end{proof}

Since every locally $q^{th}$ power set is associated to a linear covering of $\PG(\F_{q}^{k})$, Theorem \ref{mainresult} is a consequence of Proposition \ref{th:connectioncoveringqpowerset} and the Lemma \ref{coveringtoblocking}.

Theorem \ref{mainresult} enables us to employ various tools and results from the field of Galois geometry to extract number-theoretic information regarding locally $q^{th}$ power sets. To wrap up this section, we will present few corollaries of Theorem \ref{mainresult}. The first corollary below demonstrates that the concept of $(i)$ minimal locally $q^{th}$ power set and $(ii)$ minimal blocking set, correspond to one another as expected. 

\begin{corollary}
Using the notations in Theorem \ref{mainresult}, we have that $B$ is a minimal locally $q^{th}$ power set if and only if the set $\mathcal{S}$ of points in $\PG(\F_{q}^{k})$ forms a minimal blocking set of $\PG(\F_{q}^{k})$ and $|\mathcal{S}| = |B|$. \label{minimalblockingandminimalpowerset}
\end{corollary}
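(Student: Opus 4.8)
The plan is to leverage Theorem \ref{mainresult} together with the duality $\perp$ and Lemma \ref{coveringtoblocking}, reducing everything to tracking how the notions of \emph{essential element} and \emph{essential point} transfer across the correspondence. First I would recall that, by Lemma \ref{lem:sizeqfree}, if $B$ is minimal then $|B| = |\pi_q(B)| = \ell$, so the $\ell$ integers $b_j$ give rise to $\ell$ pairwise $\sim_q$-distinct $q$-free parts, hence to $\ell$ \emph{distinct} hyperplanes $\mathcal{H}_j$ and thus, via $\perp$, to $\ell$ distinct points $\mathcal{P}_j = \mathcal{H}_j^{\perp}$; in particular $|\mathcal{S}| = |B|$. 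Conversely, if $|\mathcal{S}| = |B|$ then the $\mathcal{H}_j$ are distinct, which forces the $\mathrm{rad}_q(|b_j|)$ to be pairwise non-$\sim_q$-equivalent, so no two elements of $B$ are $\sim_q$-equivalent. This handles the bookkeeping about cardinalities and shows the condition $|\mathcal{S}| = |B|$ is exactly what is needed to rule out the trivial obstruction to minimality noted just before Lemma \ref{lem:sizeqfree}.

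The core of the argument is a dictionary between the two notions of ``essential.'' I would argue: assuming $|\mathcal{S}|=|B|$, the element $b_{j_0}$ is essential in $B$ if and only if the point $\mathcal{P}_{j_0} = \mathcal{H}_{j_0}^{\perp}$ is essential in $\mathcal{S}$. By definition $b_{j_0}$ is essential iff $B \setminus \{b_{j_0}\}$ is not a locally $q^{th}$ power set, which by Proposition \ref{th:connectioncoveringqpowerset} (applied to the smaller set, whose associated hyperplanes are exactly $\{\mathcal{H}_j : j \neq j_0\}$ since the primes $p_1,\dots,p_k$ may be recomputed but the covering condition is insensitive to dropping primes that no longer appear — here one must be slightly careful, but the union $\bigcup_{j \neq j_0}\mathcal{H}_j$ either still equals $\F_q^k$ or it does not, independently of the ambient dimension) is equivalent to $\bigcup_{j \neq j_0} \mathcal{H}_j \subsetneq \F_q^k$, i.e. there exists a point $\mathcal{P} \in \F_q^k$ lying on $\mathcal{H}_{j_0}$ but on no $\mathcal{H}_j$ with $j \neq j_0$. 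Dualizing via $\perp$ (which preserves incidence), this says there is a hyperplane $\mathcal{P}^{\perp}$ of $\PG(\F_q^k)$ containing $\mathcal{H}_{j_0}^{\perp} = \mathcal{P}_{j_0}$ but no other point of $\mathcal{S}$ — which is precisely the statement that $\mathcal{P}_{j_0}$ is an essential point of the blocking set $\mathcal{S}$. Applying this equivalence to every index $j$ gives: every $b_j$ essential $\iff$ every $\mathcal{P}_j$ essential, i.e. $B$ minimal $\iff$ $\mathcal{S}$ minimal, under the standing hypothesis $|\mathcal{S}|=|B|$.

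Finally I would assemble the two directions. If $B$ is a minimal locally $q^{th}$ power set: by Theorem \ref{mainresult} it is a locally $q^{th}$ power set so $\mathcal{S}$ is a blocking set; by Lemma \ref{lem:sizeqfree} and the first paragraph $|\mathcal{S}| = |B|$; and by the dictionary $\mathcal{S}$ is minimal. Conversely, if $\mathcal{S}$ is a minimal blocking set with $|\mathcal{S}| = |B|$: by Theorem \ref{mainresult} $B$ is a locally $q^{th}$ power set, the hypothesis $|\mathcal{S}|=|B|$ puts us in the regime where the dictionary applies, and minimality of $\mathcal{S}$ yields minimality of $B$.

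The main obstacle I anticipate is the subtlety flagged in the middle paragraph: when passing from $B$ to $B \setminus \{b_{j_0}\}$, the number $k$ of relevant primes, and hence the ambient space $\PG(\F_q^k)$, can shrink if some prime divided only $b_{j_0}$. One must check that the covering condition of Proposition \ref{th:connectioncoveringqpowerset} is compatible with this — concretely, that $\bigcup_{j\neq j_0}\mathcal{H}_j = \F_q^k$ (in the original $k$-dimensional space) holds iff the analogous union in the possibly-smaller space equals the whole smaller space, since a coordinate that is identically zero in all the surviving equations simply corresponds to a direction along which every surviving $\mathcal{H}_j$, and the union, is a ``cylinder.'' Making this reduction precise — perhaps by observing that in the minimal case with $|\mathcal{S}|=|B|$ and $\mathcal{S}$ a blocking set, $\mathcal{S}$ necessarily spans $\PG(\F_q^k)$ so the issue does not arise, or alternatively by dualizing before deleting — is the one place where care is genuinely required; the rest is a routine transfer through the established duality.
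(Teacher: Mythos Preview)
Your proposal is correct and follows essentially the same route as the paper: invoke Lemma~\ref{lem:sizeqfree} for the cardinality condition, and then argue that removing $b_{j_0}$ from $B$ yields a locally $q^{th}$ power set if and only if removing $\mathcal{P}_{j_0}=\mathcal{H}_{j_0}^{\perp}$ from $\mathcal{S}$ yields a blocking set, via Proposition~\ref{th:connectioncoveringqpowerset} and the duality $\perp$. The paper states this equivalence in one line as a consequence of Theorem~\ref{mainresult}; you unfold the duality explicitly, and you are in fact more careful than the paper in flagging the issue that the ambient dimension $k$ might drop when an element is removed --- a point the paper silently absorbs into its appeal to Proposition~\ref{th:connectioncoveringqpowerset}, but which your ``cylinder'' remark handles cleanly.
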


\begin{proof}
Note that every element $\mathcal{H}_{j}^{\perp}$ of $\mathcal{S}$ is associated to an element of $B$. This corollary is a straightforward consequence of Lemma \ref{lem:sizeqfree} and the fact that the following two statements are equivalent, as a consequence of Theorem \ref{mainresult}:
\begin{itemize}
    \item The set $\mathcal{S}\setminus\{\mathcal{H}_{j_{0}}^{\perp}\}$ is a blocking set of $\PG(\F_{q}^{k})$. 

    \item The set $B \setminus\{b_{j_{0}}\}$ is a locally $q^{th}$ power set,
\end{itemize}
see Proposition \ref{th:connectioncoveringqpowerset}.
\end{proof}

Another consequence of Theorem \ref{mainresult} is the following lower bound on non-trivial locally $q^{th}$ power sets. 

\begin{proposition} [see \textnormal{\cite[Theorem 2.1]{blokhuis2011blocking} and \cite{bose1966characterization}}] 
Let $k \geq 2$. Any blocking set $B$ of $\PG(\F_{q}^{k})$ has at least $q+1$ points. In case of equality the blocking set is the point set of a line. \label{th:firstboundblocking}
\end{proposition}

As a consequence, in unison with Theorem \ref{mainresult}, we obtain the following corollary that was also proved in \cite{mishra2023prime}. 

\begin{corollary} \label{prop:leqq}
A subset $B$ of integers, with cardinality at most $q$, contains a $q^{th}$ power modulo almost every prime if and only if $B$ contains a perfect $q^{th}$ power. In particular, any locally $q^{th}$ power set has size at least $q+1$.
\end{corollary}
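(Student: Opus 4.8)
The plan is to prove the equivalence by noting that one direction is trivial and obtaining the other as a direct corollary of Theorem \ref{mainresult} together with the classical blocking-set bound of Proposition \ref{th:firstboundblocking}; the ``in particular'' clause is then immediate.

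First I would dispose of the backward implication: if $B$ contains a perfect $q^{th}$ power $d^{q}$, then $d^{q}$ is a $q^{th}$ power modulo every prime, so $B$ contains a $q^{th}$ power modulo every prime. For the forward implication I would argue by contraposition, assuming $B$ contains no perfect $q^{th}$ power and $|B|\le q$, and aiming to show that $B$ is not a locally $q^{th}$ power set. Suppose otherwise. Replacing $B$ by $\pi_{q}(B)$ --- which by the reduction in Section \ref{section:prelim} remains a locally $q^{th}$ power set, still of cardinality at most $|B|\le q$ and still containing no perfect $q^{th}$ power --- I let $k$ be the number of distinct primes dividing $\prod_{b\in\pi_q(B)}\rad(b)$ and form the associated point set $\mathcal{S}=\{\mathcal{P}_{j}\}$ in $\PG(\F_{q}^{k})$. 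Since no $\rad(|b_{j}|)$ equals $1$, each defining vector $(\nu_{1j},\ldots,\nu_{kj})$ is nonzero, so every $\mathcal{P}_{j}$ is a well-defined point of $\PG(\F_{q}^{k})$ and $k\ge 1$.

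The one place that needs care is the degenerate dimension $k=1$, where Definition \ref{defnblocking} does not literally apply: here each hyperplane $\mathcal{H}_{j}$ associated with $B$ as in Section \ref{hyperplanes} is $\{x\in\F_{q}\colon \nu_{1j}x=0\}=\{0\}$ (the single exponent $\nu_{1j}$ being a nonzero element of $\F_{q}$), hence $\bigcup_{j}\mathcal{H}_{j}=\{0\}\subsetneq\F_{q}$ as $q\ge 3$, contradicting Proposition \ref{th:connectioncoveringqpowerset}. Therefore $k\ge 2$. Now Theorem \ref{mainresult} gives that $\mathcal{S}$ is a blocking set of $\PG(\F_{q}^{k})$, so by Proposition \ref{th:firstboundblocking} it has at least $q+1$ points; but $|\mathcal{S}|\le|\pi_{q}(B)|\le|B|\le q$, a contradiction. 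This establishes the contrapositive, hence the equivalence. Finally, for the ``in particular'' clause I would observe that a locally $q^{th}$ power set by definition contains no perfect $q^{th}$ power yet contains a $q^{th}$ power modulo almost every prime, so by the equivalence just proved it cannot have at most $q$ elements, i.e.\ it has size at least $q+1$. I expect the main (indeed the only) obstacle to be the bookkeeping for the small-dimension cases $k\in\{0,1\}$, where ``blocking set'' is not defined; the substantive content is entirely imported through Theorem \ref{mainresult}.
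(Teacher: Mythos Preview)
Your argument is correct and follows exactly the route the paper intends: Theorem~\ref{mainresult} together with Proposition~\ref{th:firstboundblocking}. The only addition is your explicit treatment of the degenerate case $k=1$ via Proposition~\ref{th:connectioncoveringqpowerset}, which the paper tacitly passes over; this is a harmless but welcome bit of bookkeeping.
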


\section{Geometric $q$-Equivalence} \label{section:geomeq}
To start the discussion pertaining to this section, consider the following proposition that shows that the property of a finite subset of integers to be a locally $q^{th}$ power set is invariant under exponentiation by elements of $\F_{q}\setminus\{0\}$ in the following sense. 
\begin{proposition} [see \textnormal{\cite[Corollary 2]{mishra2023prime}}]
Let $B = \{b_{j}\}_{j=1}^{\ell}$ be a finite subset of integers. Given $\Vec{c}=(c_{j})_{j=1}^{\ell} \in\Big(\F_{q}\setminus\{0\}\Big)^{l}$, define $B^{\Vec{c}} = \{b_{j}^{c_{j}}\}_{j=1}^{\ell}$. $B$ is a locally $q^{th}$ power set if and only if $B^{\Vec{c}}$ is a locally $q^{th}$ power set for every $\Vec{c}\in\F_{q}\setminus\{0\}$. \label{exponentiation} 
\end{proposition}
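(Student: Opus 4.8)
The plan is to translate the exponentiation operation on $B$ into an operation on the associated hyperplane arrangement (equivalently, blocking set), and then observe that the resulting arrangement covers $\PG(\F_q^k)$ if and only if the original one does. Concretely, fix the primes $p_1,\ldots,p_k$ dividing elements of $\pi_q(B)$ and write $\rad(b_j)$ in terms of the exponent vector $(\nu_{1j},\ldots,\nu_{kj})$. Replacing $b_j$ by $b_j^{c_j}$ with $c_j \in \F_q \setminus \{0\}$ multiplies each exponent $\nu_{ij}$ by $c_j$ (mod $q$, after passing to the $q$-free part), so the hyperplane $\cH_j = \{\sum_i \nu_{ij} x_i = 0\}$ is replaced by $\cH_j^{c_j} = \{\sum_i c_j\nu_{ij} x_i = 0\}$. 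Since $c_j \neq 0$ in $\F_q$, the equation $\sum_i c_j \nu_{ij} x_i = 0$ defines exactly the same subset of $\F_q^k$ as $\sum_i \nu_{ij} x_i = 0$; that is, $\cH_j^{c_j} = \cH_j$ as subsets of $\F_q^k$ (the defining linear form is just rescaled by a nonzero scalar). Hence the whole union $\bigcup_{j=1}^\ell \cH_j$ is literally unchanged.

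With that observation in hand, the proof is immediate: by Proposition \ref{th:connectioncoveringqpowerset}, $B$ is a locally $q^{th}$ power set if and only if $\bigcup_{j=1}^\ell \cH_j = \F_q^k$, and likewise $B^{\vec c}$ is a locally $q^{th}$ power set if and only if $\bigcup_{j=1}^\ell \cH_j^{c_j} = \F_q^k$; since these two unions coincide, the two conditions are equivalent. One should note at the outset that $B$ contains no perfect $q^{th}$ power if and only if $B^{\vec c}$ does not either — indeed $\rad(|b_j|) = 1$ precisely when $\rad(|b_j^{c_j}|) = 1$, because raising to a nonzero exponent mod $q$ permutes the nontrivial classes in $\Z/q\Z$ coordinatewise — so the hypothesis needed to invoke Proposition \ref{th:connectioncoveringqpowerset} is symmetric in $B$ and $B^{\vec c}$. (Alternatively one can phrase everything on the blocking-set side via Theorem \ref{mainresult}: exponentiation by $c_j$ sends the point $\cP_j = \langle(\nu_{1j},\ldots,\nu_{kj})\rangle$ to $\langle(c_j\nu_{1j},\ldots,c_j\nu_{kj})\rangle = \cP_j$, so the associated point set is unchanged as a set of projective points, and being a blocking set is obviously unaffected.)

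There is essentially no hard step here; the only thing that requires a moment of care is bookkeeping the distinction between the integer exponent $\nu_{ij}$ and its residue mod $q$, and checking that the set of primes one coordinatizes against does not change when passing from $B$ to $B^{\vec c}$ — but it does not, since $p_i \mid \rad(|b_j|)$ iff $p_i \mid \rad(|b_j^{c_j}|)$, again because $c_j$ is invertible mod $q$. Thus I would present the argument in two short paragraphs: first the lemma-level observation that $\cH_j$ depends only on the line spanned by $(\nu_{1j},\ldots,\nu_{kj})$ and hence is invariant under the scaling induced by $c_j$, and then the one-line deduction from Proposition \ref{th:connectioncoveringqpowerset} (or Theorem \ref{mainresult}). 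The statement that "$B$ iff $B^{\vec c}$ for every $\vec c$" then follows by applying this equivalence to a single arbitrary $\vec c$, and the "only if" direction of the biconditional in the proposition is recovered by taking $\vec c = (1,\ldots,1)$, which gives $B^{\vec c} = B$ up to $\sim_q$.
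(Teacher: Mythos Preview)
Your proposal is correct. The paper does not give its own proof of this proposition---it cites it from \cite{mishra2023prime} and then immediately remarks that element-wise exponentiation is ``a particular instance'' of the more general geometric $q$-equivalence of Theorem~\ref{th:propertygeometricequiv}. Your argument is precisely that remark made explicit: raising $b_j$ to the $c_j$-th power rescales the exponent vector $(\nu_{1j},\ldots,\nu_{kj})$ by the nonzero scalar $c_j$, which leaves the projective point $\mathcal{P}_j$ (equivalently the hyperplane $\mathcal{H}_j$) unchanged, so the associated point set in $\PG(\F_q^k)$ is literally the same and Theorem~\ref{mainresult} (or Proposition~\ref{th:connectioncoveringqpowerset}) finishes the job. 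In the language of the paper, the relevant element of $\mathrm{PGL}(k,q)$ is the identity, so this is the trivial case of Theorem~\ref{th:propertygeometricequiv}.
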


Element-wise exponentiation in Proposition \ref{exponentiation} is a particular instance of a much more general set of transformations, that we will call \textit{geometric $q$-equivalence}. In this section, we will explain geometric $q$-equivalence and show that the property of being a locally $q^{th}$ power set is invariant under it. We will denote the projective general linear group of degree $k$ over $\F_{q}$ by $\mathrm{PGL}(k,q)$.

\begin{definition}
Let $B = \{a_{j}\}_{j=1}^{m}$ and $B^{\prime} = \{b_{j}\}_{j=1}^{\ell}$ be two finite subsets of non-zero integers, not containing a perfect $q^{th}$ power. Let $p_{1}, p_{2}, \ldots, p_{k}$ be all the primes that divide any element of $\pi_{q}(B) \cup \pi_{q}(B^{\prime})$. Let $\text{rad}_{q}(|a_{j}|) = \prod_{i=1}^{k} p_{i}^{\nu_{ij}}$
for every $j\in\{1, 2, \ldots, m\}$ and $\text{rad}_{q}(|b_{j}|) = \prod_{i=1}^{k} p_{i}^{\mu_{ij}}$ for every  $j \in\{1, 2, \ldots, \ell\}$, where $\nu_{ij}, \mu_{ij} \geq 0$. Define 
\begin{multline*}
\\ \mathcal{S} = \big\{ \langle ( \nu_{1j}, \nu_{2j}, \ldots, \nu_{kj} )\rangle_{\fq} \in\PG(\F_{q}^{k}) : j\in\{1, 2, \ldots, m\} \big\} \text{ and }\\ \mathcal{S}^{\prime} = \big\{\langle (\mu_{1j}, \mu_{2j}, \ldots, \mu_{kj} ) \rangle_{\fq} \in\PG(\F_{q}^{k}) : j\in\{1, 2, \ldots, \ell\} \big\} \\
\end{multline*}
to be the point sets in $\PG(\mathbb{F}_{q}^{k})$ associated to $B$ and $B^{\prime}$ respectively. We will say that the sets $B$ and $B^{\prime}$ are \textit{geometrically q-equivalent} if and only if there exists an element $\Psi \in\mathrm{PGL}(k,q)$ such that $\Psi(\mathcal{S}) = \mathcal{S}^{\prime}$.
\end{definition}

Our next result shows that property whether a given finite $B\subset\mathbb{Z}\setminus\{0\}$, is a locally $q^{th}$ power set is preserved by geometric $q$-equivalence.  
\begin{theorem} \label{th:propertygeometricequiv}
    Let $B=\{b_1,b_2,\ldots,b_t\} \subset \Z \setminus \{0\}$ be a set of integers not containing a perfect $q^{th}$ power. Then $B$ is a locally $q^{th}$ power set if and only if every set $B^{\prime}$ that is geometrically $q$-equivalent to $B$ is a locally $q^{th}$ power set, as well. 
\end{theorem}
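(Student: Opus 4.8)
The plan is to reduce everything to the combinatorial/geometric characterization already in hand, namely Theorem~\ref{mainresult} together with Lemma~\ref{coveringtoblocking} and Proposition~\ref{th:connectioncoveringqpowerset}, and then observe that membership of a point set in the class of blocking sets of $\PG(\F_q^k)$ is obviously invariant under $\mathrm{PGL}(k,q)$. So the first step is to unwind the definition of geometric $q$-equivalence: if $B' $ is geometrically $q$-equivalent to $B$, then by definition there is a common ambient dimension $k$ (the number of primes dividing elements of $\pi_q(B)\cup\pi_q(B')$), associated point sets $\cS,\cS'\subseteq\PG(\F_q^k)$, and an element $\Psi\in\mathrm{PGL}(k,q)$ with $\Psi(\cS)=\cS'$. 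The key step is then: a projectivity $\Psi$ maps hyperplanes to hyperplanes bijectively and preserves incidence, so $\cS$ is a blocking set with respect to hyperplanes of $\PG(\F_q^k)$ if and only if $\Psi(\cS)=\cS'$ is. Combining this with Theorem~\ref{mainresult} applied in dimension $k$ — which says $B$ (resp.\ $B'$) is a locally $q^{th}$ power set iff $\cS$ (resp.\ $\cS'$) is a blocking set of $\PG(\F_q^k)$ — gives the equivalence for that particular $B'$.

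There is one genuine technical wrinkle that I would address carefully: Theorem~\ref{mainresult} is stated with $k$ equal to the number of primes dividing $\prod_{b\in B}\rad(|b|)$, i.e.\ the \emph{intrinsic} dimension for $B$ alone, whereas in the definition of geometric $q$-equivalence $k$ is the number of primes dividing elements of $\pi_q(B)\cup\pi_q(B')$, which may be strictly larger. So I first need a lemma (or a remark invoked from the preliminaries) that the property of being a locally $q^{th}$ power set, and the property of the associated point set being a blocking set, are both unaffected by passing from $\PG(\F_q^{k_0})$ to $\PG(\F_q^{k})$ for $k\ge k_0$ via the natural embedding of coordinates. On the number-theory side this is because enlarging the list of primes just appends zero exponents, which changes nothing about which primes divide the $b_j$ or about $q^{th}$-power status modulo $p$. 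On the geometry side, the observation already recorded after Definition~2.12 — that a blocking set of $\PG(W)$ is a blocking set of $\PG(\F_q^k)$ — together with its converse when $\cS$ spans $\PG(W)$, handles it: the point set of $B$ in the larger space lies in the coordinate subspace $W\cong\F_q^{k_0}$, and it blocks all hyperplanes of the big space iff it blocks all hyperplanes of $W$. I would phrase this so that, after the reduction, Theorem~\ref{mainresult} can legitimately be applied inside $\PG(\F_q^k)$ to both $B$ and $B'$ simultaneously.

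With that in place the argument is short: assume $B$ is a locally $q^{th}$ power set; let $B'$ be any set geometrically $q$-equivalent to $B$, with common $k$, point sets $\cS,\cS'$, and $\Psi\in\mathrm{PGL}(k,q)$ with $\Psi(\cS)=\cS'$. By Theorem~\ref{mainresult} (applied in $\PG(\F_q^k)$, using the dimension-enlargement remark), $\cS$ is a blocking set of $\PG(\F_q^k)$; since $\Psi$ is a collineation it preserves hyperplanes and incidence, so $\cS'=\Psi(\cS)$ is a blocking set as well; applying Theorem~\ref{mainresult} again, now to $B'$, yields that $B'$ is a locally $q^{th}$ power set. The converse direction is immediate because geometric $q$-equivalence is symmetric: if every $B'$ geometrically $q$-equivalent to $B$ is a locally $q^{th}$ power set, then in particular $B$ itself is (take $\Psi$ the identity), so nothing more is needed — but I would still note symmetry (and, for the record, reflexivity and transitivity, since $\mathrm{PGL}(k,q)$ is a group) so that ``geometrically $q$-equivalent'' is an honest equivalence relation as claimed in the introduction.

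The main obstacle is entirely the bookkeeping around the ambient dimension $k$: one must make sure that when $B$ and $B'$ involve different prime supports, the single value of $k$ used in the equivalence is consistent with the hypotheses of Theorem~\ref{mainresult} for each of them, and that a projectivity of the big space does not need the spanning subspaces of $\cS$ and $\cS'$ to coincide — only that $\Psi$ carries one point set onto the other. There is no hard inequality or combinatorial estimate here; once the dimension-normalization lemma is stated cleanly, the proof is a two-line application of the already-established dictionary between locally $q^{th}$ power sets and blocking sets, plus the triviality that collineations map blocking sets to blocking sets.
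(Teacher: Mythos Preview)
Your proposal is correct and follows essentially the same route as the paper: reduce to the dictionary of Theorem~\ref{mainresult}, embed into the common ambient $\PG(\F_q^k)$ determined by the primes of $\pi_q(B)\cup\pi_q(B')$, and use that $\mathrm{PGL}(k,q)$ preserves the class of blocking sets. Your treatment of the dimension-enlargement issue (passing from the intrinsic $k_0$ of $B$ to the shared $k$) is in fact more explicit than the paper's, which simply quotes the remark that a blocking set of $\PG(W)$ is a blocking set of $\PG(\F_q^k)$ and then invokes Lemma~\ref{coveringtoblocking} without further comment.
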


\begin{proof}
    We will show that if $B$ is a locally $q^{th}$ power set, then every set $B^{\prime}$ that is geometrically $q$-equivalent to $B$ is also a locally $q^{th}$ power set. The converse holds trivially.
    
    Let $ \{p_1,\ldots,p_s\}$ be the set of all prime numbers dividing the elements of \[\pi_q(B)=\{ 
    \rad( b_1 ),\ldots,\rad( b_t )
    \} \] such that $p_i^{v_{ij}} \mid \mid \rad( b_j )$, for every $1\leq i \leq s$ and every $1 \leq j \leq t$. By Lemma \ref{coveringtoblocking}, the set of points $\mathcal{S}=\{\mathcal{P}_1,\ldots,\mathcal{P}_t\} \subseteq \PG(\F_{q}^{s})$, where 
        \[
        \mathcal{P}_j=\langle (v_{1j},\ldots,v_{sj}) \rangle_{\F_q}
        ,\]
        for each $1 \leq j \leq t$, is a blocking set of $\PG(\F_{q}^{s})$. First, assume that $B'$ is a geometrically $q$-equivalent set to $B$ and let $\pi_{q}(B^{\prime}) = \{a_1, \ldots, a_l\}$. Let $p_{s+1}, \ldots, p_{k}$, with $k \geq s$ be some primes such that $\{p_1,\ldots,p_s,p_{s+1},\ldots,p_k\}$ is the set of all the primes that divides the elements of $B \cup B'$. Suppose that $\lvert a_j \rvert=\prod_{i=1}^k p_i^{w_{ij}}$, for $j \in \{1,\ldots,l\}$ and $\lvert b_h \rvert=\prod_{i=1}^kp_i^{v_{ih}}$, for $h \in \{1,\ldots,t\}$ (Note that $v_{ij}=0$, if $i \geq s+1$). Since, $B'$ is geometrically $q$-equivalent to $B$ then there exists an element $\Psi \in \mathrm{PGL}(k,q)$, such that 
    \[\Psi(\mathcal{S})=\mathcal{S}',\]
    where $\mathcal{S}=\{\langle (w_{1j},\ldots,w_{kj}) \rangle_{\F_q} \colon j \in \{1,\ldots,l\}\}$ and  $\mathcal{S}'=\{\langle (v_{1h},\ldots,v_{kh}) \rangle_{\F_q} \colon h \in \{1,\ldots,t\}\}$. Since the $\mathcal{P}_j$'s form a blocking set of $\PG(\F_{q}^{s})$, they define also a blocking set in $\PG(\F_{q}^{k})$, because $k \geq s$, and hence $\mathcal{S}$ is a blocking set in $\PG(\F_{q}^{k})$. The property of being a blocking set is invariant under the action of $\mathrm{PGL}(k,q)$ on $\PG(\F_q^k)$, and then we get that also $\mathcal{S}'$ is a blocking set. Again by Theorem \ref{coveringtoblocking}, we have that $B'$ is a $q^{th}$-power set, and then the assertion follows. 
\end{proof}
One benefit of Theorem \ref{th:propertygeometricequiv} is that one can systematically study locally $q^{th}$ power sets according to the number of distinct primes that divide them. Assume that the point set of $\PG(\F_{q}^{k})$ associated to the set $B \subset\mathbb{Z}\setminus\{0\}$ lie in a proper subspace of $\PG(\F_{q}^{k})$. In this case,  $B$ is geometrically $q$-equivalent to a set $B^{\prime}$ that is divisible by fewer primes. We will demonstrate this first using an illustrative example.  

\begin{example}
    Choose $q=7$. Let \[B=\{2,15,30,60,120,240,480,960\}=\{2,3\cdot 5,2\cdot 3 \cdot 5,2^2\cdot 3 \cdot 5,2^3\cdot 3 \cdot 5,2^4\cdot 3 \cdot 5,2^5\cdot 3 \cdot 5, 2^6\cdot 3 \cdot 5\}\]
    The set of all primes that divide an element of $B$ is $\{2, 3, 5\}$.  The set $B$ defines the points 
\begin{multline*}
    \mathcal{P} = \Bigg\{\langle (1,0,0) \rangle_{\F_q},\langle (0,1,1) \rangle_{\F_q},\langle (1,1,1) \rangle_{\F_q},\langle (2,1,1) \rangle_{\F_q},\\ \langle (3,1,1) \rangle_{\F_q},\langle (4,1,1)  \rangle_{\F_q}, \langle (5,1,1) \rangle_{\F_q},\langle (6,1,1) \rangle_{\F_q} \Bigg\} 
\end{multline*}
    in $\PG(\F_{7}^{3})$, which is the line of $\PG(\F_{7}^{3})$ through the points $\langle (1,0,0) \rangle_{\F_q}$ and $\langle (0,1,1) \rangle_{\F_q}$. Consider
    \[
    A=\begin{pmatrix}
        1 & 0 & 0 \\
        0 & 1 & 0 \\
        0 & 1 & 6
    \end{pmatrix} \in \mathrm{PGL}(3,7).
    \] The set $\mathcal{P}$ is mapped to the set
    \begin{multline*}
    \mathcal{P}^{\prime} = \{ \langle (1,0,0)\rangle_{\fq},\langle(0,1,0)\rangle_{\fq},\langle(1,1,0)\rangle_{\fq},\langle(2,1,0)\rangle_{\fq}, \\\langle(3,1,0)\rangle_{\fq},\langle(4,1,0)\rangle_{\fq},\langle(5,1,0)\rangle_{\fq},\langle(6,1,0)\rangle_{\fq}\}\end{multline*} by the element of $\mathrm{PGL}(3,7)$ induced by matrix $A$. Note that $\mathcal{P}^{\prime}$ is associated to the set
    \[
   B^{\prime} = \{2, 3 ,2\cdot 3,2^2 \cdot 3,2^3 \cdot 3, 2^4 \cdot 3, 2^5\cdot 3,2^6 \cdot 3\}=\{2,3,6,12,24,48,96,192\}.
    \] that is divisible only by primes $2$ and $3$. To summarize, $B$ is geometrically $7$-equivalent to a set $B^{\prime}$, which is only divisible by $2$ primes. This is because $\mathcal{P}$ spans a proper subspace of $\PG(\F_{7}^{3})$. 
\end{example}
Consider the following definition. 
\begin{definition}
Let $B\subset\mathbb{Z}\setminus\{0\}$ be a finite set of integers and assume that there are exactly $k$ distinct primes dividing any element of $\pi_{q}(B)$. For a positive integer $d \leq k-1$, we will say that $B$ is $d$-dimensional if the point set in $\PG(\F_{q}^{k})$ associated to $B$ spans a $d$-dimensional subspace.
\end{definition}

Since $k$ counts the number of distinct primes dividing the elements of $\pi_{q}(B)$, one expects that if $B$ is $d$-dimensional, $B$ must ultimately be made up of $(d+1)$ primes. This is essentially the case, up to geometric $q$-equivalence, which is the content of the next proposition. 

\begin{proposition} \label{prop:reduceprimelength}
  Consider $B=\{b_1,b_2,\ldots,b_t\} \subset \Z \setminus \{0\}$ be a set of integers not containing a perfect $q^{th}$ power and assume that $B$ is $(r-1)$-dimensional. Then, for any choice of $r$ different primes $\overline{p}_1,\ldots,\overline{p}_r$, there exists a set $B^{\prime}$, that is geometrically $q$-equivalent to $B$, such that $\{\overline{p}_1,\ldots,\overline{p}_{r}\}$ is the set of all the primes numbers that divides at least one element of $B^{\prime}$.
\end{proposition}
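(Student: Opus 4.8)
## Proof Plan for Proposition \ref{prop:reduceprimelength}

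The plan is to build the set $B'$ directly from a basis of the $r$-dimensional vector subspace underlying the span of the point set of $B$, and then to exhibit the element of $\mathrm{PGL}(k,q)$ witnessing the geometric $q$-equivalence by extending a linear isomorphism between two $r$-dimensional subspaces of $\F_q^k$. This is a statement purely about the associated point sets, so neither Theorem~\ref{mainresult} nor Lemma~\ref{coveringtoblocking} is needed — only linear algebra over $\F_q$ together with the definitions.

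First I would set up notation. Let $p_1 < \cdots < p_m$ be the primes dividing some element of $\pi_q(B)$ and let $\mathcal{S}=\{\mathcal{P}_1,\ldots,\mathcal{P}_t\} \subseteq \PG(\F_q^m)$ be the associated point set, say $\mathcal{P}_j=\langle v_j \rangle_{\fq}$ with $v_j=(\nu_{1j},\ldots,\nu_{mj})$ and $p_i^{\nu_{ij}} \mid\mid \rad(b_j)$. By hypothesis $\mathcal{S}$ spans $\PG(W)$ with $\dim_{\fq} W = r$, so $r \le m$. Fix a basis $u^{(1)},\ldots,u^{(r)}$ of $W$ and write $v_j=\sum_{i=1}^r \lambda_{ij}u^{(i)}$, so that $\lambda_j:=(\lambda_{1j},\ldots,\lambda_{rj}) \in \F_q^r \setminus\{0\}$ for every $j$. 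Given the primes $\overline{p}_1,\ldots,\overline{p}_r$, let $\mu_{ij} \in \{0,1,\ldots,q-1\}$ be the representative of $\lambda_{ij}$, and put $b_j' := \prod_{i=1}^r \overline{p}_i^{\,\mu_{ij}}$ and $B':=\{b_1',\ldots,b_t'\}$.

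Then the verification splits into three parts. (i) $B'$ contains no perfect $q^{th}$ power, because each $b_j'$ has all of its $\overline{p}_i$-exponents in $\{0,\ldots,q-1\}$ and they are not all zero (as $\lambda_j \neq 0$), so $\rad(b_j')=b_j' \neq 1$. (ii) The set of primes dividing an element of $\pi_q(B')$ is exactly $\{\overline{p}_1,\ldots,\overline{p}_r\}$: no other prime can occur by construction, and for each fixed $i$ the coordinate functional $f_i \colon W \to \F_q$ determined by $f_i(u^{(l)})=\delta_{il}$ is nonzero, hence cannot vanish on the spanning set $\{v_j\}_j$, so $\lambda_{ij} \neq 0$ for some $j$ and therefore $\overline{p}_i \mid b_j'$. (iii) $B'$ is geometrically $q$-equivalent to $B$: let $P_1 < \cdots < P_k$ enumerate the primes dividing an element of $\pi_q(B) \cup \pi_q(B')$, so $\{p_i\}\cup\{\overline{p}_i\}=\{P_l\}$ and $k \le m+r$; inside $\PG(\F_q^k)$ the point sets of $B$ and $B'$ lie respectively in the coordinate subspaces $U$ and $U'$ spanned by the axes indexed by the $p_i$ and by the $\overline{p}_i$, and under the identifications $U \cong \F_q^m$, $U' \cong \F_q^r$ they become $\mathcal{S}$ (spanning a copy of $\PG(W)$) and $\mathcal{S}'=\{\langle \lambda_j\rangle_{\fq}\}_j$. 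The assignment $u^{(i)} \mapsto e_i'$, where $e_i'$ is the axis of $U'$ indexed by $\overline{p}_i$, defines a linear isomorphism $W \xrightarrow{\ \sim\ } U'$ sending $v_j$ to $\lambda_j$; extending it to some $A \in \GL(k,q)$ by choosing complements $\F_q^k = W \oplus W'' = U' \oplus U'''$ and any isomorphism $W'' \to U'''$, the induced $\Psi \in \mathrm{PGL}(k,q)$ satisfies $\Psi(\mathcal{S})=\mathcal{S}'$, as required.

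I expect the only delicate point to be step (iii): the ambient dimension $k$ in the definition of geometric $q$-equivalence is dictated by $B \cup B'$, so one must correctly embed both point sets into $\PG(\F_q^k)$, identify the coordinate subspaces they occupy, and only then extend the core isomorphism $W \to U'$ to all of $\F_q^k$ — which is always possible precisely because the direct-sum decompositions on the source and target sides may be chosen independently of one another. The degenerate case $r \le 1$ does not arise, since the definition of an $(r-1)$-dimensional set already requires $r - 1 \ge 1$.
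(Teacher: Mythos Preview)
Your proof is correct and follows essentially the same route as the paper's: use the transitivity of $\mathrm{PGL}$ on $(r-1)$-dimensional projective subspaces to carry the span of $\mathcal{S}$ onto a coordinate subspace, then read off $B'$ from the resulting coordinates. You are in fact more careful than the paper on two points it leaves implicit --- you work in the ambient $\PG(\F_q^k)$ determined by $\pi_q(B)\cup\pi_q(B')$ exactly as the definition of geometric $q$-equivalence requires, and you verify that each $\overline p_i$ actually divides some element of $B'$ (and you allow the $\overline p_i$ to be arbitrary, whereas the paper's written proof silently takes them among the original primes $p_1,\ldots,p_r$).
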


\begin{proof}
Let $ \{p_1,\ldots,p_k\}$ be the set of all prime numbers diving the elements of \[\pi_q(B)=\{ 
    \rad( b_1 ),\ldots,\rad( b_t )
    \} \] such that $p_i^{v_{ij}} \mid \mid \rad( b_j )$, for every $1\leq i \leq k$ and every $1 \leq j \leq t$. 
 And consider the set of points $\mathcal{S}=\{\mathcal{P}_1,\ldots,\mathcal{P}_t\} \subseteq \PG(\F_{q}^{k})$, where 
        \[
        \mathcal{P}_j=\langle (v_{1j},\ldots,v_{kj}) \rangle_{\F_q}
        ,\]
        for each $1 \leq j \leq t$. Since $B$ is $(r-1)$-dimensional, the projective subspace $\langle \mathcal{S} \rangle$, generated by $\mathcal{S}$ has dimension $r-1$. Let $\Psi$ be the element of $\mathrm{PGL}(k,q)$ mapping $\langle \mathcal{S} \rangle$ in the $(r-1)$-dimensional subspace of $\PG(\F_{q}^{k})$ having equation $X_{r+1}=\ldots=X_k=0$. In other words, $\Psi(\mathcal{S})=\mathcal{S}'$, where $\mathcal{S}'=\{\mathcal{Q}_1,\ldots,\mathcal{Q}_t\} \subseteq \PG(\F_{q}^{k})$, where 
        \[
        \mathcal{Q}_j=\langle (w_{1j},\ldots,w_{kj}) \rangle_{\F_q}
        ,\]
        for each $1 \leq j \leq t$, with the properties that $w_{ij}=0$, if $i \geq r+1$. So the set \[
        B'=\left\{\prod_{i=1}^{r}p_i^{w_{i1}},\ldots,\prod_{i=1}^{r}p_i^{w_{it}}\right\}
        \]
        is geometrically $q$-equivalent to $B$, having the desired property.
\end{proof}

In certain cases, Proposition \ref{prop:reduceprimelength} can be employed to show that a given set is not a locally $q^{th}$ power set. Consider the following example: 
\begin{example}
    Choose $q=7$. Let \[B=\{2,3,4,5,6,10,15,25\}=\{2,3,2^2,5,2\cdot 3,2 \cdot 5,3 \cdot 5, 5^2\},\] which has $8$ elements. The set of all primes that divide an element of $B$ is $\{2, 3,5\}$. The point set
    \begin{multline*}
         \mathcal{S} = \Big\{\langle (1,0,0) \rangle_{\F_q},\langle (0,1,0) \rangle_{\F_q},\langle (2,0,0) \rangle_{\F_q},\langle (0,0,1) \rangle_{\F_q}, \\ \langle (1,1,0) \rangle_{\F_q},\langle (1,0,1) \rangle_{\F_q},\langle (0,1,1) \rangle_{\F_q},\langle (0,0,2) \rangle_{\F_q}\Big\}
    \end{multline*}
    in $\PG(\F_{7}^{3})$ is associated to $B$. However, as a point set in $\PG(\F_{7}^{3})$
    \[
    \mathcal{S} = \{\langle (1,0,0) \rangle_{\F_q},\langle (0,1,0) \rangle_{\F_q},\langle (0,0,1) \rangle_{\F_q},\langle (1,1,0) \rangle_{\F_q},\langle (1,0,1) \rangle_{\F_q},\langle (0,1,1) \rangle_{\F_q},\} 
    \]
    and hence, $B$ is geometrically $q$-equivalent to 
    \[B^{\prime} = \{2,3,5,2\cdot 3,2 \cdot 5,3 \cdot 5\}=\{2,3,5,6,10,15\}.\]
    Now the size of $B^{\prime}$ is $6 < 7$. Therefore, Proposition \ref{prop:leqq} implies that the set $B^{\prime}$ is not a locally $7^{th}$ power set, and hence using Theorem \ref{th:propertygeometricequiv}, $B$ is not a locally $7^{th}$ power set. 
\end{example}

\section{Classification and bounds of locally $q^{th}$ power set} \label{section:classification}
In this section, we will classify the locally $q^{th}$ power sets based on their cardinality and the shape of their elements. For instance, we describe the minimal locally $q^{th}$ power sets of smallest cardinality $q+1$. Next, we will describe the minimal locally $q^{th}$ power set of second smallest cardinality. Here, our starting point is the following well-known result. 

\begin{proposition}[see \cite{blokhuis1994size,heim1996blockierende}]
\label{th:secondminimumsizeblockingset} For $q$ an odd prime, let $\mathcal{S}$ be a non-trivial blocking set of $\PG(\F_{q}^{k})$, then 
$\lvert \mathcal{S} \rvert \geq \frac{3(q+1)}{2}$. In the case of equality the blocking set is planar.
\end{proposition}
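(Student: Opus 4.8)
The statement is classical; I only outline the argument, deferring the technical core to \cite{blokhuis1994size,heim1996blockierende} (see also the survey \cite{blokhuis2011blocking}).

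The plan is to reduce to a \emph{minimal} non-trivial blocking set, then to the projective subspace it spans, and then to induct on the dimension of that subspace, the planar case being Blokhuis' theorem. First I would note that a non-trivial blocking set contains no line, hence so does every blocking set contained in it; therefore it suffices to prove the bound for minimal non-trivial blocking sets, the general case following by passing to a minimal sub-blocking set. Afterwards the equality clause is immediate: a non-trivial blocking set of size exactly $\tfrac{3(q+1)}{2}$ must be minimal, for otherwise it would properly contain a non-trivial blocking set of strictly smaller size, contradicting the bound. So let $\mathcal S$ be minimal and non-trivial, put $\Pi=\langle\mathcal S\rangle$ and $r=\dim\Pi\ge 2$; then $\mathcal S$ is a minimal non-trivial blocking set of $\Pi\cong\PG(\F_q^{r+1})$, and I would induct on $r$.

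For the base case $r=2$ the inequality $|\mathcal S|\ge\tfrac{3(q+1)}{2}$ — sharp because of the projective triangle — is exactly Blokhuis' theorem on minimal blocking sets of $\PG(\F_q^{3})$ for $q$ an odd prime; this is the only place the primality of $q$ enters, and its proof, via lacunary (Rédei) polynomials over $\F_q$, is the analytic heart of the matter. For the inductive step $r\ge 3$ I would prove the \emph{strict} inequality $|\mathcal S|>\tfrac{3(q+1)}{2}$, which simultaneously gives the bound for $r\ge 3$ and shows that equality forces $r=2$, i.e.\ planarity. Since by Proposition \ref{th:firstboundblocking} and the inductive bound $|\mathcal S|$ is far below $|\PG(\F_q^{r+1})|$, choose a point $Q\notin\mathcal S$ and a hyperplane $H$ of $\Pi$ with $Q\notin H$, and let $\sigma$ be the projection onto $H$ with centre $Q$. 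An incidence check shows $\mathcal S':=\sigma(\mathcal S)$ is a blocking set of $H\cong\PG(\F_q^{r})$ with $|\mathcal S'|\le|\mathcal S|$: for a hyperplane $H'$ of $H$, the join $\langle Q,H'\rangle$ is a hyperplane of $\Pi$, hence meets $\mathcal S$ in some $P$, and then $\sigma(P)\in\langle Q,H'\rangle\cap H=H'$. If $\sigma$ is not injective on $\mathcal S$, or if $\mathcal S'$ is non-trivial of size $>\tfrac{3(q+1)}{2}$, the inequality $|\mathcal S|>\tfrac{3(q+1)}{2}$ follows from the inductive hypothesis; and if $\mathcal S'$ is non-trivial of size exactly $\tfrac{3(q+1)}{2}$ with $\sigma$ injective, then by induction $\mathcal S'$ is planar, so $\mathcal S$ lies in the $3$-dimensional cone $\langle Q,\langle\mathcal S'\rangle\rangle$, which contradicts $\langle\mathcal S\rangle=\Pi$ as soon as $r\ge 4$.

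This leaves two delicate configurations, and the main obstacle is precisely their exclusion: (i) $\mathcal S'$ is a \emph{trivial} blocking set of $H$ (so $\mathcal S$ is organised around a cone over a line with vertex $Q$), and (ii) $r=3$ with $\sigma$ injective and $\mathcal S'$ a planar minimal blocking set of size $\tfrac{3(q+1)}{2}$ (so $\mathcal S$ is a cone over a projective-triangle-type set inside $\PG(\F_q^{4})$). Here one must use that $\mathcal S$, being non-trivial, contains no line: $\mathcal S$ meets the relevant planar section in a line-free set having at least one point on every line through $Q$, and counting the remaining points of $\mathcal S$ — using minimality, i.e.\ the existence of tangent hyperplanes — forces $|\mathcal S|>\tfrac{3(q+1)}{2}$. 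This finite verification, together with the bookkeeping of the induction, is carried out in \cite{heim1996blockierende}; everything outside the planar Blokhuis bound and these two cases is routine.
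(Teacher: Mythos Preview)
The paper does not prove this proposition at all: it is stated with a citation to \cite{blokhuis1994size,heim1996blockierende} and then used as a black box in the proof of Theorem~\ref{th:classification}. So there is no ``paper's own proof'' to compare against.

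Your sketch is a sensible outline of how the higher-dimensional statement is reduced to Blokhuis' planar bound, and you are right that the analytic content sits entirely in the $r=2$ case. Two small comments. First, your case split is slightly loose: the clause ``if $\sigma$ is not injective on $\mathcal S$'' only yields $|\mathcal S|>\tfrac{3(q+1)}{2}$ provided $\mathcal S'$ is also non-trivial, so that case should be folded explicitly under the ``$\mathcal S'$ non-trivial'' branch. Second, calling the handling of configurations (i) and (ii) a ``finite verification'' undersells it; in particular (i) should really be phrased as ``for \emph{every} admissible choice of centre $Q$, the projection $\mathcal S'$ contains a line'', which is a strong structural constraint on $\mathcal S$ one then exploits --- it is an argument, not a check. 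With those caveats, your outline matches the standard reduction and appropriately defers the genuine work to the cited sources, exactly as the paper itself does.
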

We also prove that the bound $\lvert B \rvert \geq 3(q+1/2)$ is sharp, for any $q$ odd prime, completely classifying minimal locally $q^{th}$-power set in the case of the second smallest size $\lvert B \rvert=3(q+1)/2$ for $q=3,5$.

\begin{theorem} \label{th:classification}
    Let $B=\{b_1, \ldots, b_{\ell}\}$ be a locally $q^{th}$ power set and $\mathcal{S}$ be the point set in $\PG(\F_{q}^{k})$ associated to $B$. Here, $k$ is the number of distinct primes dividing $\prod_{j=1}^{\ell} \pi_{q}(b_{j})$. Then, $\lvert B \rvert \geq q+1$ and the following holds:
    \begin{enumerate}[i)]
        \item The cardinality of $B$ is $q+1$ if and only if $\mathcal{S}$ is a line of $\PG(\F_{q}^{k})$ and $B$ is geometrically $q$-equivalent to the set
        \begin{equation}
        \{\overline{p}_1, \overline{p}_2, \overline{p}_1\overline{p}_2, \overline{p}_1\overline{p}_2^2, \ldots, \overline{p}_1\overline{p}_2^{q-1}\}, \label{lineexample}
        \end{equation}
    for two distinct primes $\overline{p}_1,\overline{p}_2$. In this case, $B$ is $1$-dimensional and minimal. \vspace{2mm}
    
        \item Minimal locally $q^{th}$ power sets of cardinality strictly between $q+1$ and $\frac{3(q+1)}{2}$ do not exist. In particular, $q+1 < \lvert B \rvert < \frac{3(q+1)}{2}$ if and only if $B$ contains a proper subset that is geometrically $q$-equivalent to \eqref{lineexample}. \vspace{2mm}
        
        \item $B$ is minimal and $\lvert B \rvert=\frac{3(q+1)}{2}$ if and only if $B$ is $2$-dimensional, i.e., $\mathcal{S}$ is a blocking set of a plane $\PG(\F_{q}^{2})$ of $\PG(\F_{q}^{k})$. 
    \end{enumerate}
\end{theorem}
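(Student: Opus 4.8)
The plan is to push everything through the dictionary already in place: by Theorem~\ref{mainresult}, $B$ is a locally $q^{th}$ power set iff its associated point set $\mathcal S\subseteq\PG(\F_q^k)$ is a blocking set, and by Corollary~\ref{minimalblockingandminimalpowerset}, $B$ is \emph{minimal} iff $\mathcal S$ is a minimal blocking set with $|\mathcal S|=|B|$. The three items then become purely geometric assertions about blocking sets, for which the two structural results do the heavy lifting: Proposition~\ref{th:firstboundblocking} (size $q+1$ forces a line) and Proposition~\ref{th:secondminimumsizeblockingset} (a non-trivial blocking set has size $\ge\tfrac{3(q+1)}{2}$, with equality only inside a plane). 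The auxiliary tools are Proposition~\ref{prop:reduceprimelength}, to cut the number of primes down to the dimension, Theorem~\ref{th:propertygeometricequiv}, to transport the property along geometric $q$-equivalence, and Corollary~\ref{prop:leqq} for the base bound $|B|\ge q+1$.

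\textbf{Part (i).}
If $|B|=q+1$ then $q+1\le|\mathcal S|\le|B|=q+1$, so $|\mathcal S|=q+1$ and Proposition~\ref{th:firstboundblocking} forces $\mathcal S$ to be a line, so $B$ is $1$-dimensional. Applying Proposition~\ref{prop:reduceprimelength} with $r=2$ and the chosen primes $\overline p_1,\overline p_2$ gives a geometrically $q$-equivalent $B'$ with prime support exactly $\{\overline p_1,\overline p_2\}$; by Theorems~\ref{th:propertygeometricequiv} and~\ref{mainresult} its point set is a blocking set of $\PG(\F_q^2)$, hence, having $q+1$ points, all of $\PG(\F_q^2)$, which translated back to integers is exactly~\eqref{lineexample} (up to perfect $q^{th}$ power factors, irrelevant for the $\sim_q$-class). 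Minimality is immediate: removing any element leaves $q$ integers, too few by Corollary~\ref{prop:leqq}. Conversely, the point set of~\eqref{lineexample} is a line, hence a blocking set, so~\eqref{lineexample} is a locally $q^{th}$ power set by Theorem~\ref{mainresult}, and Theorem~\ref{th:propertygeometricequiv} carries this to every geometrically $q$-equivalent set.

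\textbf{Part (ii).}
A minimal locally $q^{th}$ power set with $q+1<|B|<\tfrac{3(q+1)}{2}$ would yield a minimal blocking set $\mathcal S$ of the same size; being minimal and larger than a line it contains no line, so it is non-trivial, contradicting Proposition~\ref{th:secondminimumsizeblockingset}. Hence a locally $q^{th}$ power set $B$ in this range is non-minimal, and deleting inessential elements one at a time reaches a minimal locally $q^{th}$ power set $B_0\subsetneq B$ with $q+1\le|B_0|<\tfrac{3(q+1)}{2}$, which by the previous sentence has $|B_0|=q+1$, so by~(i) is geometrically $q$-equivalent to~\eqref{lineexample}. Conversely, a locally $q^{th}$ power set (in this range) properly containing a subset geometrically $q$-equivalent to~\eqref{lineexample} properly contains a locally $q^{th}$ power set of at least $q+1$ elements by~(i), hence has more than $q+1$ elements.

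\textbf{Part (iii), and the main obstacle.}
If $B$ is minimal with $|B|=\tfrac{3(q+1)}{2}$, then $\mathcal S$ is a minimal blocking set of that size, larger than a line and hence line-free, i.e.\ non-trivial; by the equality clause of Proposition~\ref{th:secondminimumsizeblockingset} it is planar, so spans a plane $\Pi$ of $\PG(\F_q^k)$, and since every line of $\Pi$ is the trace on $\Pi$ of some hyperplane of $\PG(\F_q^k)$, the blocking set $\mathcal S\subseteq\Pi$ of $\PG(\F_q^k)$ is also a blocking set of $\Pi$; thus $B$ is $2$-dimensional. For the converse one should read the equivalence as a description of the extremal sets: a $2$-dimensional minimal $B$ corresponds to a line-free planar blocking set, which by Proposition~\ref{th:secondminimumsizeblockingset} has size $\ge\tfrac{3(q+1)}{2}$, and a line-free planar blocking set of size exactly $\tfrac{3(q+1)}{2}$ is automatically minimal (deleting a point would leave a proper blocking subset, which is either a line — too small — or line-free of size $\ge\tfrac{3(q+1)}{2}$ — impossible). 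The genuinely non-elementary inputs are exactly the two quoted bounds on blocking sets together with the sharpness of the second (realized by an explicit planar ``projective triangle''); everything else is careful bookkeeping — shuttling between $B$, its minimal locally $q^{th}$ power subsets, and the subspace spanned by $\mathcal S$ — and I expect that bookkeeping, plus pinning down precisely what ``$2$-dimensional'' should mean in~(iii), to be the main thing to get right.
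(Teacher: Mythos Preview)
Your proposal is correct and follows the same approach as the paper: translate to blocking sets via Theorem~\ref{mainresult} and Corollary~\ref{minimalblockingandminimalpowerset}, then invoke Propositions~\ref{th:firstboundblocking} and~\ref{th:secondminimumsizeblockingset} for the size constraints and structural conclusions. The paper's own proof is in fact terser than yours --- it cites these results with almost no bookkeeping --- so your extra care (using Proposition~\ref{prop:reduceprimelength} in~(i) to produce the explicit two-prime form, and your discussion of how to read the converse in~(iii)) fills in details the paper leaves implicit.
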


\begin{proof}
Since $B$ is a $q^{th}$ power set, Theorem \ref{mainresult} implies that $\mathcal{S}$ is a blocking set of $\PG(\F_{q}^{k})$. In this case, $\lvert B \rvert \geq q+1$ follows from Proposition \ref{th:firstboundblocking}. 

If $\lvert B \rvert = q+1$, then Proposition \ref{th:firstboundblocking} again implies that $\mathcal{S}$ is a line of $\PG(\F_{q}^{k})$. So $\mathcal{S}=\PG(1,q)$ and $B$ is $1$-dimensional. 

When $|B|$ lies strictly between $q+1$ and $\frac{3(q+1)}{2}$, and when $|B| = \frac{3(q+1)}{2}$ the result follows from Proposition \ref{th:secondminimumsizeblockingset}.
\end{proof}

Now we present two examples that demonstrate how Theorem \ref{th:classification} can be employed to obtain results about locally $q^{th}$ power sets. 
\begin{example}
Suppose we want to investigate whether the set $B=\{2,3,4,6,21,41\}$ is a locally $q^{th}$ power set for $q=5$. Note that $B=\{2,3,2^2,2 \cdot 3,3\cdot 7,2 \cdot 3 \cdot 7\}$ has size $6$. So by Theorem \ref{th:classification}, we have that $B$ is a $q^{th}$ power set if and only if the point set \[\mathcal{S} =\{ \langle (1,0,0)  \rangle_{\fq}, \langle (0,1,0)  \rangle_{\fq},\langle (2,0,0)  \rangle_{\fq},\langle (1,1,0)  \rangle_{\fq},\langle (0,1,1)  \rangle_{\fq},\langle (1,1,1)  \rangle_{\fq}\}\] defines a line of $\PG(\F_{5}^{3})$. However, the subspace generated by $\mathcal{S}$ must contain $\langle (1,0,0)  \rangle_{\fq},$ $\langle (0,1,0)  \rangle_{\fq}$ , $\langle (0,1,1)  \rangle_{\fq}$; so, $\mathcal{S}$ generates the whole space $\PG(\F_{5}^{3})$. In other words, $\mathcal{S}$ is not a line and $B$ is not a locally $5^{th}$ power set. 
\end{example}

\begin{example}
Similarly, $B=\{6,7,42,252,1512,18114\} =\{2 \cdot 3,7,2 \cdot 3 \cdot 7,2^2 \cdot 3^2 \cdot 7,2^3 \cdot 3^3 \cdot 7,2^4 \cdot 3^4 \cdot 7\}$ has size $6$. So by Theorem \ref{th:classification}, $B$ is a locally $5^{th}$ power set, because the associated point set \[\langle (1,1,0)  \rangle_{\fq}, \langle (0,0,1)  \rangle_{\fq},\langle (1,1,1)  \rangle_{\fq},\langle (2,2,1)  \rangle_{\fq}, \langle (3,3,1)  \rangle_{\fq},\langle (4,4,1)  \rangle_{\fq}\] defines the line of $\PG(\F_{q}^{3})$ through the points $ \langle (1,1,0)  \rangle_{\fq}$ and $\langle (0,0,1)  \rangle_{\fq}$. 
\end{example}

Note that Theorem \ref{th:classification} says that $1$-dimensional locally $q^{th}$ power set having size $q+1$ are associated to a line in the projective space. In the same spirit, in the next we investigate minimal $2$-dimensional locally $q^{\text{th}}$ power sets, which have cardinality $\frac{3(q+1)}{2}$. We provide constructions of such sets for every odd prime $q$. We will then show that, unlike the case of locally $q^{\text{th}}$ power sets of size $q+1$, here it is possible to give inequivalent constructions. Therefore, this shows that the structure of such sets depends on the value of $q$. Finally, we classify these sets for small values of $q$. We will introduce some preliminary definitions and facts before doing so.

\begin{definition}
A \textit{projective triangle} in $\PG(\F_{q}^{3})$, is a set $\mathcal{S}$ of $\frac{3(q+1)}{2}$ points such that:
\begin{enumerate}
    \item there exist three non collinear points $\mathcal{P}_1, \mathcal{P}_2,\mathcal{P}_3$ such that on each side of the triangle $\mathcal{P}_1 \mathcal{P}_2\mathcal{P}_3$ (i.e. $\mathcal{S} \cap \mathcal{P}_i\mathcal{P}_j$, with $i\neq j$), there are exactly $\frac{q-1}{2}+2$ points of $\mathcal{S}$;\vspace{2mm}
    
    \item the vertices $\mathcal{P}_1,\mathcal{P}_2,\mathcal{P}_3$ are in $\mathcal{S}$;\vspace{2mm}
    
    \item for $\{i,j,k\}=\{1,2,3\}$, the line connecting a point of $\mathcal{S} \cap \mathcal{P}_i \mathcal{P}_j$, with a point of $\mathcal{S} \cap \mathcal{P}_j \mathcal{P}_k$ also contain a point of $\mathcal{S} \cap \mathcal{P}_i \mathcal{P}_k$. 
\end{enumerate}
\end{definition}

A projective triangle is a minimal blocking set of $\PG(\F_{q}^{3})$ having size reaching the equality in Proposition \ref{th:secondminimumsizeblockingset}.

\begin{proposition} [see \textnormal{\cite[Lemma 13.6 (i)]{hirschfeld1998projective}}] \label{th:projectivetriangleminimal}
Any projective triangle is a minimal non-trivial blocking set of $\PG(\F_{q}^{3})$ having size $\frac{3(q+1)}{2}$.
\end{proposition}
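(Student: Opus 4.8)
The plan is to argue entirely inside the projective plane $\PG(\F_q^3)$ after fixing homogeneous coordinates $(X_1,X_2,X_3)$ so that the three vertices of the triangle are $\mathcal{P}_1=\langle(1,0,0)\rangle$, $\mathcal{P}_2=\langle(0,1,0)\rangle$, $\mathcal{P}_3=\langle(0,0,1)\rangle$; this is possible since the $\mathcal{P}_i$ are non-collinear. Then the three sides are the lines $X_3=0$, $X_1=0$, $X_2=0$, and I parametrize their non-vertex points by $\F_q^{*}$: the non-vertex points of $\mathcal{P}_1\mathcal{P}_2$ are $\langle(1,w,0)\rangle$ with $w\in\F_q^{*}$, those of $\mathcal{P}_2\mathcal{P}_3$ are $\langle(0,1,u)\rangle$, and those of $\mathcal{P}_3\mathcal{P}_1$ are $\langle(1,0,v)\rangle$. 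By condition (1) of the definition, $\mathcal{S}$ meets each side exactly in its two vertices together with $\frac{q-1}{2}$ non-vertex points; write $S_1,S_2,S_3\subseteq\F_q^{*}$ for the corresponding parameter sets on $\mathcal{P}_2\mathcal{P}_3$, $\mathcal{P}_3\mathcal{P}_1$, $\mathcal{P}_1\mathcal{P}_2$, so $|S_i|=\frac{q-1}{2}$, and put $T_i:=\F_q^{*}\setminus S_i$, also of size $\frac{q-1}{2}$. A first observation, by inclusion–exclusion (two sides meet only in a vertex, all three have empty common intersection, and the vertices lie in $\mathcal{S}$), is that $\mathcal{S}$ meets the union of the three sides in $3\cdot\frac{q+3}{2}-3=\frac{3(q+1)}{2}=|\mathcal{S}|$ points; hence $\mathcal{S}$ is contained in the union of the three sides, a fact used throughout.

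Next I translate condition (3) into multiplicative relations among $S_1,S_2,S_3$. A determinant computation shows that $\langle(0,1,u)\rangle$, $\langle(1,0,v)\rangle$, $\langle(1,w,0)\rangle$ are collinear if and only if $v=-uw$. Condition (3) is automatic whenever one of the two chosen points is a vertex (the connecting line is then a side, meeting the third side in a vertex of $\mathcal{S}$), so its only content is: for all $u\in S_1$ and $w\in S_3$ one has $-uw\in S_2$, i.e.\ $-S_1S_3\subseteq S_2$; running the two other cyclic instances of (3) gives $S_2\subseteq -S_1S_3$, so in fact $S_2=-S_1S_3$ (and cyclically). Now I chase cardinalities: for each $b\in S_3$ the set $-bS_1$ is contained in $S_2$ and has the same size, so $-bS_1=S_2$; consequently for each $a\in S_1$ we get $-aT_3=\F_q^{*}\setminus(-aS_3)=\F_q^{*}\setminus S_2=T_2$, hence $-S_1T_3=T_2$; therefore for each $b\in T_3$ the set $-bS_1$ is contained in $T_2$ with equal size, so $-bS_1=T_2$, and finally $-bT_1=\F_q^{*}\setminus T_2=S_2$, giving $-T_1T_3=S_2$. (Equivalently, each $S_i$ is a coset of the group of nonzero squares, but only the identities $-S_1T_3=T_2$ and $-T_1T_3=S_2$, with their cyclic analogues, are needed.)

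With these identities the three remaining properties are short. \emph{Blocking:} a side meets $\mathcal{S}$ in $\frac{q+3}{2}>0$ points; a line through a vertex meets $\mathcal{S}$ in that vertex; a line through no vertex meets the three sides in three distinct non-vertex points with parameters $u_0,v_0,w_0$ satisfying $v_0=-u_0w_0$, and since $\mathcal{S}\cap\ell$ is contained in those three points it suffices to exclude $u_0\in T_1$, $w_0\in T_3$, $v_0\in T_2$ all at once, which is impossible because $-T_1T_3=S_2$ forces $v_0=-u_0w_0\in S_2$. \emph{Non-triviality:} any line of $\PG(\F_q^3)$ meets the union of the three sides in at most $3<q+1$ points, so no line lies in $\mathcal{S}$. \emph{Minimality:} at a vertex $\mathcal{P}_1$, any line through $\mathcal{P}_1$ other than the two sides through it meets $\mathcal{S}$ only in $\mathcal{P}_1$ and possibly the point it cuts on $\mathcal{P}_2\mathcal{P}_3$; choosing that cut-point with parameter in $T_1\neq\emptyset$ yields a tangent line. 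At a non-vertex point $P\in\mathcal{S}$, say with parameter $u_0\in S_1$ on $\mathcal{P}_2\mathcal{P}_3$, a line through $P$ avoiding all vertices meets $\mathcal{S}$ only in $P$ and possibly the points it cuts on the other two sides, with parameters $w$ and $v=-u_0w$; picking $w\in T_3$ forces $v=-u_0w\in -S_1T_3=T_2$, so both extra points lie off $\mathcal{S}$ and the line is tangent at $P$. The size $\frac{3(q+1)}{2}$ is part of the definition (and is reconfirmed by the count in the first paragraph). I expect the only real work to be the bookkeeping of condition (3) — checking that it is genuinely automatic in the vertex cases and assembling the cardinality-forced chain $S_2=-S_1S_3\Rightarrow -bS_1=S_2\Rightarrow -aT_3=T_2\Rightarrow -bT_1=S_2$ — after which blocking, non-triviality, and minimality drop out immediately.
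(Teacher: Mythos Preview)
Your proof is essentially correct. The paper does not actually prove this proposition at all: it simply records it as a citation to Hirschfeld, \emph{Projective Geometries over Finite Fields}, Lemma~13.6(i). So there is nothing to compare your argument against beyond the external reference; you have supplied a complete self-contained argument where the paper defers to the literature.

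Two small imprecisions are worth tidying up. First, in your reduction of condition~(3) you write that ``the connecting line is then a side'' whenever one chosen point is a vertex. That is only true when the vertex is the shared vertex $\mathcal{P}_j$; if the vertex is $\mathcal{P}_i$ (respectively $\mathcal{P}_k$), the connecting line is \emph{not} a side, but it still meets $\mathcal{P}_i\mathcal{P}_k$ at $\mathcal{P}_i$ (respectively $\mathcal{P}_k$), which lies in $\mathcal{S}$. The conclusion stands, but the parenthetical reason should be adjusted. Second, in the non-triviality step, the sentence ``any line of $\PG(\F_q^3)$ meets the union of the three sides in at most $3$ points'' is false for the three sides themselves. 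You need to treat those separately: a side meets $\mathcal{S}$ in $\tfrac{q+3}{2}<q+1$ points by condition~(1), so it is not contained in $\mathcal{S}$; and any other line meets the union of the sides in at most three points. With these two cosmetic fixes the argument is clean.
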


A construction of such a point set is the following. Denote by $Q$ the set of non zero square elements of $\fq$, i.e.
\[
Q:=(\fq^*)^2=\{a^2 \colon a \in \fq^*\}.
\]
Recall that if $q$ is odd then $\lvert Q \rvert=\frac{q-1}{2}$.  
\begin{proposition} [see \textnormal{\cite[Lemma 13.6 (i)]{hirschfeld1998projective}}] \label{th:constructionprojectivetriangle}
Let $q$ be an odd prime. The point set
\begin{small}
\[
\mathcal{S}=\{\langle (0, 1, -s) \rangle_{\fq},\langle(-s, 0, 1)\rangle_{\fq},\langle (1, -s, 0) \rangle_{\fq} \colon s \in Q\} \cup \{\langle (1, 0, 0) \rangle_{\fq},\langle(0, 1, 0)\rangle_{\fq},\langle (0, 0, 1) \rangle_{\fq}\}
\]
\end{small} 
in $\PG(\F_{q}^{3})$ is a projective triangle. In particular, a projective triangle always exists.
\end{proposition}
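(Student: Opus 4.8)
The plan is to verify directly that the displayed point set $\mathcal{S}$ satisfies the three defining conditions of a projective triangle, taking as the reference triangle the three coordinate points $\mathcal{P}_1 = \langle(1,0,0)\rangle_{\fq}$, $\mathcal{P}_2 = \langle(0,1,0)\rangle_{\fq}$, $\mathcal{P}_3 = \langle(0,0,1)\rangle_{\fq}$. These are manifestly non-collinear and lie in $\mathcal{S}$, so condition (2) is immediate, and it will remain only to check (1) and (3).

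First I would identify how the non-vertex points distribute among the three sides. The side $\mathcal{P}_2\mathcal{P}_3$ is the line $X_1 = 0$; among the points $\langle(0,1,-s)\rangle$, $\langle(-s,0,1)\rangle$, $\langle(1,-s,0)\rangle$ with $s \in Q$, only those of the first type lie on it, giving $\frac{q-1}{2}$ points, which together with the two vertices $\mathcal{P}_2,\mathcal{P}_3$ on that line yields $\frac{q-1}{2}+2$ points of $\mathcal{S}$ on the side $\mathcal{P}_2\mathcal{P}_3$. By the cyclic symmetry of the construction (the map $(x_1,x_2,x_3)\mapsto(x_3,x_1,x_2)$ permutes the three families of points and cyclically permutes the three vertices), the same count holds on the other two sides; this gives condition (1). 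One should also note in passing that these points are genuinely distinct and none accidentally coincides with a vertex, which is clear since each has exactly one zero coordinate.

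The substance of the argument is condition (3): given $s \in Q$ and $t \in Q$, the line joining a point of $\mathcal{S}\cap\mathcal{P}_1\mathcal{P}_2$ to a point of $\mathcal{S}\cap\mathcal{P}_2\mathcal{P}_3$ must meet $\mathcal{S}\cap\mathcal{P}_1\mathcal{P}_3$ (and, by the cyclic symmetry, the analogous statements for the other two choices of $\{i,j,k\}$ then follow automatically). Concretely, take the point $\langle(1,-s,0)\rangle$ on side $\mathcal{P}_1\mathcal{P}_2$ and the point $\langle(0,1,-t)\rangle$ on side $\mathcal{P}_2\mathcal{P}_3$; a general point of the line through them is $\langle(\lambda, -s\lambda+\mu, -t\mu)\rangle$. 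Forcing the second coordinate to vanish, i.e. $\mu = s\lambda$, produces the point $\langle(1,0,-st)\rangle$, which lies on $\mathcal{S}\cap\mathcal{P}_1\mathcal{P}_3$ precisely because $st \in Q$ — here one uses that $Q = (\fq^*)^2$ is closed under multiplication (and that $q$ odd forces $\lvert Q\rvert = \frac{q-1}{2}$, as already recalled). The only delicate bookkeeping is to handle the degenerate sub-cases where one of the chosen points is itself a vertex: then the joining line is a side of the triangle and the required incidence is trivial, or is the case where the two chosen points already share a side. I expect condition (3) — specifically the closure-under-products computation and the verification that the resulting point indeed has the prescribed coordinate shape $\langle(1,0,-st)\rangle$ with $st$ square — to be the main (though still routine) obstacle; everything else is immediate from the coordinate description and the cyclic symmetry. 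Once (1), (2), (3) are established, $\mathcal{S}$ is by definition a projective triangle, and in particular one exists for every odd prime $q$.
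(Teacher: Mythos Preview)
Your verification is correct: the cyclic symmetry under $(x_1,x_2,x_3)\mapsto(x_3,x_1,x_2)$ handles conditions (1) and (2), and for condition (3) the key computation that the line through $\langle(1,-s,0)\rangle$ and $\langle(0,1,-t)\rangle$ hits the third side in $\langle(1,0,-st)\rangle=\langle(-(st)^{-1},0,1)\rangle$, with $(st)^{-1}\in Q$, is exactly the right one; the vertex sub-cases are indeed trivial since the resulting line is then one of the sides of the triangle and meets the third side in a vertex. The paper does not supply its own argument for this proposition---it simply cites Hirschfeld---so your direct coordinate verification is the standard proof one finds in the reference, and nothing more is needed.
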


In the next proposition, we show that is always possible to construct a minimal locally $q^{th}$ power set for any $q$ as in iii) of Theorem \ref{th:classification}. 
\begin{proposition} \label{th:constructionsecondminimalqthpower}
Let $Q$ be the set of non zero square elements of $\fq$. Let $p_1,p_2,p_3$ be three distinct primes. 
The set
\begin{small}
\[
B=\{p_2p_3^{q-s}, p_1^{q-s}p_3,p_1p_2^{q-s} \colon s \in Q\} \cup \{p_1,p_2,p_3\}
\]
\end{small} 
is a minimal $q^{th}$ power set having size $\frac{3(q+1)}{2}$.
\end{proposition}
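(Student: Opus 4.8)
The plan is to route everything through the dictionary of Theorem \ref{mainresult} and Corollary \ref{minimalblockingandminimalpowerset}, reducing the claim to Propositions \ref{th:projectivetriangleminimal} and \ref{th:constructionprojectivetriangle}. First I would record the elementary bookkeeping about $B$: since $s \in Q \subseteq \{1,\ldots,q-1\}$ we have $q-s \in \{1,\ldots,q-1\}$, so every element of $B$ is a product of $p_1,p_2,p_3$ with exponents in $\{0,1,\ldots,q-1\}$. Consequently $\rad(b)=b$ for every $b \in B$, no element of $B$ is a perfect $q^{th}$ power (each has some exponent not divisible by $q$), and the set of primes dividing $\prod_{b \in B}\rad(b)$ is exactly $\{p_1,p_2,p_3\}$. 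Thus $k=3$ and the point set associated to $B$ lives in $\PG(\F_q^3)$.

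Next I would identify that point set. Reducing exponents modulo $q$ and using $q-s \equiv -s \pmod q$, the integers $p_2p_3^{q-s}$, $p_1^{q-s}p_3$, $p_1p_2^{q-s}$ yield the points $\langle(0,1,-s)\rangle_{\fq}$, $\langle(-s,0,1)\rangle_{\fq}$, $\langle(1,-s,0)\rangle_{\fq}$ as $s$ ranges over $Q$, and $p_1,p_2,p_3$ yield $\langle(1,0,0)\rangle_{\fq}$, $\langle(0,1,0)\rangle_{\fq}$, $\langle(0,0,1)\rangle_{\fq}$. This is precisely the point set $\mathcal{S}$ of Proposition \ref{th:constructionprojectivetriangle}, hence a projective triangle; by Proposition \ref{th:projectivetriangleminimal} it is a minimal non-trivial blocking set of $\PG(\F_q^3)$ of size $\frac{3(q+1)}{2}$.

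Finally I would assemble the conclusion. Since $\mathcal{S}$ is a blocking set of $\PG(\F_q^3)$ and $B$ contains no perfect $q^{th}$ power, Theorem \ref{mainresult} gives that $B$ is a locally $q^{th}$ power set. Because $\mathcal{S}$ has $\frac{3(q+1)}{2}$ distinct points and the assignment $b \mapsto \mathcal{P}_b$ factors through $\pi_q$, distinct points force $\pi_q$-inequivalent — in particular distinct — integers, so $|B| = |\pi_q(B)| = |\mathcal{S}| = \frac{3(q+1)}{2}$. Combining the minimality of $\mathcal{S}$ as a blocking set with $|\mathcal{S}|=|B|$, Corollary \ref{minimalblockingandminimalpowerset} yields that $B$ is a minimal locally $q^{th}$ power set, as desired. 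I do not expect a genuine obstacle here: the only points needing care are the verification that no element of $B$ is a perfect $q^{th}$ power (so Theorem \ref{mainresult} applies) and the coordinate-wise check that the listed exponent vectors reproduce the triangle of Proposition \ref{th:constructionprojectivetriangle} after reduction modulo $q$, both of which are routine.
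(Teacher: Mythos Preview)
Your proof is correct and follows essentially the same approach as the paper: identify the associated point set as the projective triangle of Proposition \ref{th:constructionprojectivetriangle}, then invoke Proposition \ref{th:projectivetriangleminimal} together with the blocking-set dictionary. The paper's own proof is terser, citing Theorem \ref{th:classification} in place of your direct appeal to Theorem \ref{mainresult} and Corollary \ref{minimalblockingandminimalpowerset}, but your version is more careful in verifying the hypotheses (no perfect $q^{th}$ powers, $k=3$, $|B|=|\mathcal{S}|$) and is entirely sound.
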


\begin{proof} 
The point set
\begin{small}
\[
\mathcal{S}=\{\langle (0, 1, -s) \rangle_{\fq},\langle(-s, 0, 1)\rangle_{\fq},\langle (1, -s, 0) \rangle_{\fq} \colon s \in Q\} \cup \{\langle (1, 0, 0) \rangle_{\fq},\langle(0, 1, 0)\rangle_{\fq},\langle (0, 0, 1) \rangle_{\fq}\} \subseteq \PG(\F_{q}^{3}),
\]
\end{small} 
associated to $B$ is a projective triangle (see Proposition \ref{th:constructionprojectivetriangle}). Now, the assertion follows by Theorem \ref{th:classification} and Proposition \ref{th:projectivetriangleminimal}.
\end{proof}

\begin{example}
For $q=5$, the set of non-zero squares is $Q=\{1,4\}$. Hence, the set
\begin{small}
\[
B=\{p_1,p_2,p_3,p_2p_3^{4}, p_1^{4}p_3,p_1p_3,p_1p_2^{4},p_1p_2, p_{2}p_{3} \} 
\]
\end{small} 
is a minimal locally $5^{th}$ power set of cardinality $9$, according to Proposition \ref{th:constructionsecondminimalqthpower}. Here, $p_{1}, p_{2}$ and $p_{3}$ are three distinct primes. 
\end{example}

\section{$2$-dimensional Locally $q^{th}$ Power Set} \label{section:2dim}
Theorem \ref{th:classification} states that a minimal $2$-dimensional locally $q^{th}$ power set has cardinality at least $\frac{3(q+1)}{2}$, and in the case of equality the associated point set in $\PG(\F_{q}^{k})$ spans a planar blocking set. In this section, we will classify minimal $2$-dimensional locally $q^{th}$ power set for $q \in \{3,5,7\}$.

\subsection{For $q = 3$}
\begin{proposition} [see \textnormal{\cite[Theorem 13.21]{hirschfeld1998projective}}]
A non-trivial minimal blocking set of $\PG(\F_{3}^{3})$ has size $6$ and it is a projective triangle, that is unique up to the action of $\mathrm{PGL}(3,3)$.
\end{proposition}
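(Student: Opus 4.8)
The plan is to reduce the statement to a purely combinatorial counting problem over $\PG(\F_3^3)$ and then finish by an explicit, essentially exhaustive argument. Recall that in $\PG(\F_3^k)$ every hyperplane carries the same number of points and every line has exactly $q+1=4$ points; by Proposition~\ref{th:firstboundblocking} a blocking set has at least $4$ points, with equality only for a line, and by Proposition~\ref{th:secondminimumsizeblockingset} a non-trivial blocking set has at least $\frac{3(q+1)}{2}=6$ points. So the size claim is immediate from Proposition~\ref{th:secondminimumsizeblockingset}; what remains is to show that in $\PG(\F_3^3)$ a non-trivial minimal blocking set of size $6$ is necessarily a projective triangle, and that all such configurations lie in a single $\mathrm{PGL}(3,3)$-orbit.

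First I would argue that a minimal non-trivial blocking set $\mathcal{S}$ of size $6$ in $\PG(\F_3^3)$ cannot be contained in a line (non-triviality) and must in fact span the whole space: if it spanned only a plane $\PG(\F_3^2)$, a blocking set there has at least $q+1=4$ points and a minimal one of size $6$ in a plane would have to be analysed, but one checks that a minimal blocking set of $\PG(\F_3^2)$ has size at most $q+\sqrt{q}+1$ which for $q=3$ forces size $\le 5$, a contradiction with minimality once we are inside the plane (alternatively, a blocking set of a projective plane of order $3$ of size $6$ always contains a line, hence is trivial). Therefore $\mathcal{S}$ spans $\PG(\F_3^3)$. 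Next I would use the standard counting identity for blocking sets: counting incident (point of $\mathcal{S}$, hyperplane) pairs two ways, with $|\mathcal{S}|=6$ and $13$ planes each meeting $\mathcal{S}$, and using that an essential point lies on a tangent plane, to pin down the distribution of secant/tangent planes. This yields that $\mathcal{S}$ has exactly three points lying on ``many'' lines of $\mathcal{S}$ — the candidate vertices $\mathcal{P}_1,\mathcal{P}_2,\mathcal{P}_3$ — and that these three are non-collinear.

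Then I would show that the three vertices are non-collinear and that each of the three lines $\mathcal{P}_i\mathcal{P}_j$ meets $\mathcal{S}$ in exactly $\frac{q-1}{2}+2 = 3$ points, accounting for $3+3\cdot 1 = 6$ points of $\mathcal{S}$ (three vertices plus one interior point on each side); here the minimality of $\mathcal{S}$ is what forbids a fourth point on any side or a point off all three sides, since such a point would be skippable. Checking the collinearity condition~(3) in the definition of a projective triangle then follows because over $\F_3$ each side has a single non-vertex point, so the line through the interior points of two sides is forced, and one verifies it meets the third side inside $\mathcal{S}$ using that $\mathcal{S}$ blocks the plane through those points. This identifies $\mathcal{S}$ as a projective triangle. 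Finally, uniqueness up to $\mathrm{PGL}(3,3)$: the group acts transitively on ordered triples of non-collinear points (frames minus one point), so after a projectivity we may assume the vertices are $\langle(1,0,0)\rangle,\langle(0,1,0)\rangle,\langle(0,0,1)\rangle$; since over $\F_3$ the unique non-vertex point on each side is then determined (it is $\langle(1,1,0)\rangle$ type up to the residual stabilizer, which permutes the coordinates and hence the three sides consistently), the configuration is rigid and coincides with the construction of Proposition~\ref{th:constructionprojectivetriangle} with $Q=\{1\}$.

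The main obstacle is the rigidity/uniqueness step: one must be careful that the counting argument really forces exactly the triangle structure and not some other $6$-point minimal blocking set, and that the residual stabilizer of the three vertices in $\mathrm{PGL}(3,3)$ — which has order $6$, permuting coordinates — genuinely identifies all admissible choices of the interior points into one orbit. I expect this to be where the bulk of the case analysis lives, though since $\PG(\F_3^3)$ has only $13$ points it is entirely finite and can, if needed, be dispatched by a short exhaustive check; I would cite \cite[Theorem 13.21]{hirschfeld1998projective} for the clean statement and reproduce only the counting skeleton above.
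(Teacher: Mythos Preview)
The paper does not prove this proposition at all; it is quoted from \cite[Theorem~13.21]{hirschfeld1998projective} and used as a black box. So there is no ``paper's own proof'' to compare against, and your attempt to supply one goes beyond what the paper does.

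That said, your sketch has a genuine gap. Proposition~\ref{th:secondminimumsizeblockingset} gives only the \emph{lower} bound $|\mathcal{S}|\geq 6$; the proposition you are proving asserts that \emph{every} non-trivial minimal blocking set of $\PG(\F_3^3)$ has size \emph{exactly} $6$. You never rule out sizes $7$ or larger. One clean way to do this: since $\mathcal{S}$ contains no line, its complement (of size $13-|\mathcal{S}|$) is also a non-trivial blocking set, so $13-|\mathcal{S}|\geq 6$ and $|\mathcal{S}|\leq 7$; then a standard incidence count with $t_i$ the number of $i$-secant lines shows that for $|\mathcal{S}|=7$ one gets $t_1=4$, which is too few tangents for all seven points to be essential. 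Without such an argument the size claim is not established.

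There is also some dimensional confusion in your first paragraph. In the paper's notation $\PG(\F_3^3)=\PG(2,3)$ is already the projective plane (with $13$ points, as you correctly note later), and $\PG(\F_3^2)$ is a projective \emph{line}, not a plane; so the passage ``if it spanned only a plane $\PG(\F_3^2)$ \ldots a minimal blocking set of $\PG(\F_3^2)$ has size at most $q+\sqrt{q}+1$'' does not parse. Finally, your uniqueness step asserts that on each side the non-vertex point of $\mathcal{S}$ is ``determined''; in fact each side of the triangle has \emph{two} non-vertex points and only one of them lies in $\mathcal{S}$, so there is a genuine choice. The correct statement is that the triangle condition forces $abc=-1$ in $\F_3$ for the three interior parameters, and the diagonal part of the vertex stabilizer in $\mathrm{PGL}(3,3)$ acts transitively on those solutions. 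Your hedge ``up to the residual stabilizer'' is the right instinct, but the claim as written is inaccurate.
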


As a consequence, we get that, up to the action of $\mathrm{PGL}(3,3)$, the only minimal blocking set of $\PG(2,3)$, is that described in Proposition \ref{th:constructionprojectivetriangle}.

\begin{proposition} \label{th:classq3}
    Let $q=3$ and $B$ be a minimal locally $3^{rd}$ power set and $\mathcal{S}$ be the point set in $\PG(\F_{q}^{k})$ associated to $B$. Then, $| B | = \frac{3(q+1)}{2} = 6$ if and only if $\mathcal{S}$ is geometrically $q$-equivalent to the set
    \[\{\overline{p}_1,\overline{p}_2,\overline{p}_3,\overline{p}_1\overline{p}_2^{2},\overline{p}_1\overline{p}_3^{2},\overline{p}_2\overline{p}_3^{2}\},\]
    for three distinct primes $\overline{p}_1,\overline{p}_2,\overline{p}_3$.
\end{proposition}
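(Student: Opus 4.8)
The strategy is to combine the characterization results already established with the uniqueness of the minimal blocking set of $\PG(\F_3^3)$. First I would invoke Theorem \ref{th:classification} iii): since $B$ is minimal and $q=3$, the condition $|B|=\frac{3(q+1)}{2}=6$ is equivalent to $B$ being $2$-dimensional, i.e.\ the associated point set $\mathcal{S}$ is a (non-trivial) blocking set of some plane $\PG(\F_3^2)$ sitting inside $\PG(\F_3^k)$, and by Corollary \ref{minimalblockingandminimalpowerset} it is a \emph{minimal} such blocking set with $|\mathcal{S}|=|B|=6$. The cited Proposition (Hirschfeld, Theorem 13.21) then says that a non-trivial minimal blocking set of $\PG(\F_3^3)$ has size $6$ and is a projective triangle, unique up to the action of $\mathrm{PGL}(3,3)$. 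Hence $\mathcal{S}$ is $\mathrm{PGL}(3,q)$-equivalent to the explicit projective triangle of Proposition \ref{th:constructionprojectivetriangle}, which for $q=3$ (where $Q=\{1\}$) consists of the six points
\[
\langle(0,1,-1)\rangle_{\fq},\ \langle(-1,0,1)\rangle_{\fq},\ \langle(1,-1,0)\rangle_{\fq},\ \langle(1,0,0)\rangle_{\fq},\ \langle(0,1,0)\rangle_{\fq},\ \langle(0,0,1)\rangle_{\fq}.
\]

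Next I would translate this back to the number-theoretic side. By Proposition \ref{prop:reduceprimelength}, since $B$ is $(r-1)$-dimensional with $r=3$, for any three distinct primes $\overline{p}_1,\overline{p}_2,\overline{p}_3$ there is a set $B'$ geometrically $q$-equivalent to $B$ whose primes are exactly $\overline{p}_1,\overline{p}_2,\overline{p}_3$; combining with the $\mathrm{PGL}(3,3)$-equivalence above, $B$ is geometrically $q$-equivalent to the set associated to the projective triangle written in these three primes. Reading off exponents from the six points (using that in $\F_3$ one has $-1\equiv 2$, so e.g.\ $\langle(0,1,-1)\rangle$ gives $\overline{p}_2\overline{p}_3^{2}$, $\langle(-1,0,1)\rangle$ gives $\overline{p}_1^{2}\overline{p}_3$, $\langle(1,-1,0)\rangle$ gives $\overline{p}_1\overline{p}_2^{2}$, and the coordinate vectors give $\overline{p}_1,\overline{p}_2,\overline{p}_3$), the associated set is
\[
\{\overline{p}_1,\ \overline{p}_2,\ \overline{p}_3,\ \overline{p}_1\overline{p}_2^{2},\ \overline{p}_1^{2}\overline{p}_3,\ \overline{p}_2\overline{p}_3^{2}\}.
\]
Finally I would reconcile this with the form stated in the Proposition, $\{\overline{p}_1,\overline{p}_2,\overline{p}_3,\overline{p}_1\overline{p}_2^{2},\overline{p}_1\overline{p}_3^{2},\overline{p}_2\overline{p}_3^{2}\}$: applying the coordinate permutation / diagonal map in $\mathrm{PGL}(3,3)$ that swaps the roles appropriately (equivalently relabelling the primes, or using Proposition \ref{exponentiation} to replace $\overline{p}_3$ by $\overline{p}_3^{2}$, i.e.\ $\overline{p}_1^2\overline{p}_3 \sim_q \overline{p}_1 \overline{p}_3^2$ after exponentiating by $2$), the two presentations are geometrically $q$-equivalent, so the stated form is obtained. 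The converse direction is immediate: the set $\{\overline{p}_1,\overline{p}_2,\overline{p}_3,\overline{p}_1\overline{p}_2^{2},\overline{p}_1\overline{p}_3^{2},\overline{p}_2\overline{p}_3^{2}\}$ is associated to a projective triangle, hence by Proposition \ref{th:constructionsecondminimalqthpower} (or directly Proposition \ref{th:projectivetriangleminimal} together with Theorem \ref{mainresult}) it is a minimal locally $3^{rd}$ power set of size $6$, and geometric $q$-equivalence preserves both minimality (Corollary \ref{minimalblockingandminimalpowerset}) and the property of being a locally $q^{th}$ power set (Theorem \ref{th:propertygeometricequiv}).

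The only genuinely delicate point is the bookkeeping in the second and third paragraphs: one must check that the specific exponent pattern coming from Proposition \ref{th:constructionprojectivetriangle} really does land, after an element of $\mathrm{PGL}(3,3)$, on the exact set displayed in the statement — i.e.\ that the two six-element sets of points are in the same $\mathrm{PGL}(3,3)$-orbit. Since both are projective triangles and the uniqueness statement guarantees a single orbit, this is automatic, but it is worth exhibiting the explicit matrix (a monomial matrix suffices) for completeness. Everything else is a direct application of Theorem \ref{th:classification}, Corollary \ref{minimalblockingandminimalpowerset}, Theorem \ref{th:propertygeometricequiv}, and the quoted classification of minimal blocking sets in $\PG(\F_3^3)$.
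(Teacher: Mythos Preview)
Your argument is correct and follows exactly the route the paper intends: the proposition is stated as an immediate consequence of the quoted Hirschfeld classification (unique projective triangle in $\PG(\F_3^3)$ up to $\mathrm{PGL}(3,3)$) together with Theorem~\ref{th:classification}~iii), Corollary~\ref{minimalblockingandminimalpowerset}, and Proposition~\ref{th:constructionprojectivetriangle}. Your reconciliation step is in fact simpler than you make it: since $\langle(-1,0,1)\rangle_{\fq}=\langle(1,0,-1)\rangle_{\fq}$ in $\PG(\F_3^3)$, the integers $\overline{p}_1^{\,2}\overline{p}_3$ and $\overline{p}_1\overline{p}_3^{\,2}$ determine the \emph{same} projective point, so the two six-element sets have identical associated point sets and no nontrivial $\mathrm{PGL}$-element is needed.
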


\subsection{For $q=5$}
We classify all the $q^{th}$ power set having the second minimum size $3(q+1)/2=9$. We start by recalling the following classification theorem for blocking sets of $\PG(\F_{5}^{3})$.
\begin{proposition} [see \textnormal{\cite[Theorem 13.25]{hirschfeld1998projective} and \cite[Section 3]{coolsaet2022classification}}]
A non-trivial minimal blocking set of $\PG(\F_{5}^{3})$ of minimum size $9$ is a projective triangle, that is unique up to the action of $\mathrm{PGL}(3,5)$.
\end{proposition}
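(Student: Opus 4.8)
The plan is to treat this as a finite classification in the plane $\PG(\F_{5}^{3})=\PG(2,5)$. Let $\mathcal{S}$ be a non-trivial minimal blocking set with $|\mathcal{S}|=9$. Since $\mathcal{S}$ contains no line, every line meets it in between $1$ and $5$ points, and a short pencil argument excludes $5$-secants: if $L$ were a $5$-secant then only $4$ points of $\mathcal{S}$ lie off $L$, yet the $5$ lines through the external point of $L$ other than $L$ would each need a further, distinct, point of $\mathcal{S}$. Writing $\tau_i$ for the number of $i$-secants, the identities $\sum_i\tau_i=31$, $\sum_i i\,\tau_i=54$ and $\sum_i\binom{i}{2}\tau_i=36$ (lines; incident point-line pairs; point-pairs of $\mathcal{S}$) solve to
\[
\tau_1=21-\tau_4,\qquad \tau_2=3\tau_4-3,\qquad \tau_3=13-3\tau_4,\qquad \tau_4\in\{1,2,3,4\}.
\]

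The heart of the argument is to show $\tau_4=3$ and that this forces a projective triangle. For the second part: three $4$-secants cannot be concurrent (a common point, in $\mathcal{S}$ or not, forces $|\mathcal{S}|\ge 10$), so they bound a triangle with vertices $P_1,P_2,P_3$; inclusion-exclusion on the three sides gives $|\mathcal{S}\cap(L_1\cup L_2\cup L_3)|=12-v$ with $v$ the number of vertices in $\mathcal{S}$, so $9=12-v+(\text{points of }\mathcal{S}\text{ off the sides})$ forces $v=3$ and no point of $\mathcal{S}$ off the triangle, i.e.\ conditions (1)--(2) of a projective triangle. Condition (3) then follows by a counting argument: minimality at an extra point $A$ of a side shows that at least one of the lines joining $A$ to points of the two opposite sides must pass through a third side-point (else $A$ has no tangent), each such line is then a $3$-secant, and a short incidence argument caps the number of such ``closing'' lines at four; so $\tau_3=4$ forces every line joining an extra point of one side to an extra point of an adjacent side to close, which is precisely condition (3).

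To finish the classification I would exclude $\tau_4\in\{1,2,4\}$ by pencil/parity estimates of the same type. For $\tau_4=4$: at most two $4$-secants pass through any point, so the four $4$-secants have six distinct pairwise intersections, while counting incident pairs ($\sum_i|L_i\cap\mathcal{S}|=16$) forces at least $7$ of those at most $6$ intersection points into $\mathcal{S}$. For $\tau_4=1$: here $\tau_2=0$, so at a point $A$ of the unique $4$-secant $L$ every pencil line other than $L$ carries $0$ or at least $2$ of the remaining $5$ points of $\mathcal{S}$, and no such distribution of $5$ among five lines avoids creating a second $4$-secant. For $\tau_4=2$ one distinguishes whether the two $4$-secants meet in a point of $\mathcal{S}$; in either case a perspectivity/pigeonhole argument on the (at most two) points of $\mathcal{S}$ off the two secants, combined with $\tau_3=7$, gives a contradiction. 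For $q=5$ each of these is a bounded check over the $31$ points and $31$ lines of $\PG(2,5)$, which is the content of the cited classification of Hirschfeld and of Coolsaet et al.

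For uniqueness up to $\mathrm{PGL}(3,5)$, I would carry $P_1,P_2,P_3$ to $\langle e_1\rangle,\langle e_2\rangle,\langle e_3\rangle$, so each side becomes a coordinate line $X_h=0$ and its two non-vertex points of $\mathcal{S}$ acquire ``slopes'' in $\F_{5}^{\ast}$. Condition (3) translates into: for the slope sets $A,B$ on two adjacent sides, $\{-\beta/\alpha:\alpha\in A,\ \beta\in B\}$ has at most two elements, which forces $A=\{\alpha,-\alpha\}$ and $B=\{\beta,-\beta\}$ with the third slope set equal to $\{-\beta/\alpha,\beta/\alpha\}$. The frame stabilizer still contains the diagonal torus $\mathrm{diag}(1,\lambda,\mu)$, which rescales the slopes and lets me normalize $\alpha=\beta=1$; the resulting configuration is precisely the point set of Proposition \ref{th:constructionprojectivetriangle} (over $\F_5$ the non-zero squares are $\{1,4\}=\{1,-1\}$), which is a projective triangle. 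The main obstacle is the exclusion of $\tau_4\in\{1,2,4\}$: the numerical identities alone leave these open, and eliminating them requires genuine case-by-case configurational analysis — this is where one really uses $q=5$ rather than a uniform argument — whereas the counting, the $\tau_3=4$ verification of condition (3), and the slope normalization for uniqueness are all routine once that step is settled.
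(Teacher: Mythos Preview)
The paper does not prove this proposition; it is quoted from \cite{hirschfeld1998projective} and \cite{coolsaet2022classification}, so there is no in-paper argument to compare against. Your direct approach via secant-counting is the standard one and is largely correct: the three line-count identities, the exclusion of $5$-secants, and the eliminations of $\tau_4=1$ and $\tau_4=4$ all go through; the deduction that $\tau_4=3$ forces the triangle structure is also correct (your ``$\tau_3=4$ forces closure'' can be made precise by noting that at each of the six non-vertex points $A$ the number of tangents through $A$ equals the number $x_A$ of $3$-secants through $A$, so minimality gives $x_A\ge 1$, the configuration gives $x_A\le 2$, and $\sum_A x_A=3\tau_3=12$ forces $x_A=2$ throughout); and the slope normalization for uniqueness is right in outline.

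The genuine gap is $\tau_4=2$. One subcase is easy and worth writing down: if the two $4$-secants meet at $P\notin\mathcal{S}$, then eight points of $\mathcal{S}$ lie on $L_1\cup L_2$ and the remaining point $Q$ must lie on every $3$-secant (any line other than $L_1,L_2$ meets $L_1\cup L_2$ in at most two points), so $\tau_3=7$ would force seven distinct lines through $Q$ in a pencil of size six. But the subcase $P\in\mathcal{S}$, with two points $Q_1,Q_2$ off $L_1\cup L_2$, is not disposed of by your ``perspectivity/pigeonhole'' gesture: the obvious constraints (each $3$-secant meets $\{Q_1,Q_2\}$; at most five non-tangent lines pass through each $Q_i$) are compatible with several distributions, and a genuine argument requires further incidence work combining the three $2$-secants with the seven $3$-secants. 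Your closing sentence concedes this by pointing back to the cited classifications, so in the end the proposal does not supply an independent proof of the one delicate case.
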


As a consequence, we get that, up to the action of $\mathrm{PGL}(3,5)$, the only minimal blocking set of $\PG(\F_{5}^{3})$, is that described in Proposition \ref{th:constructionprojectivetriangle}.

\begin{proposition} \label{th:classq5}
Let $B$ be a minimal locally $5^{th}$ power set. Then, 
            $\lvert B \rvert=\frac{3(q+1)}{2}$ if and only if $B$ is geometrically $5$-equivalent to the set \[\{\overline{p}_1,\overline{p}_2,\overline{p}_3,\overline{p}_1\overline{p}_2,\overline{p}_1\overline{p}_3,\overline{p}_2\overline{p}_3,\overline{p}_1\overline{p}_2^{4},\overline{p}_1\overline{p}_3^{4},\overline{p}_2\overline{p}_3^{4}\},\]
    for three distinct primes $\overline{p}_1,\overline{p}_2,\overline{p}_3$.
\end{proposition}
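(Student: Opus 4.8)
The plan is to reduce the statement about locally $5^{th}$ power sets to the classification of minimal non-trivial blocking sets of $\PG(\F_5^3)$ already recalled just above, namely that such a blocking set of minimum size $9$ is a projective triangle, unique up to $\mathrm{PGL}(3,5)$. Combined with the explicit construction in Proposition \ref{th:constructionprojectivetriangle}, this pins down $\mathcal{S}$ completely up to the action of $\mathrm{PGL}(3,5)$, and then Theorem \ref{th:classification}(iii) together with Theorem \ref{th:propertygeometricequiv} and Proposition \ref{prop:reduceprimelength} translates the geometry back to the number-theoretic statement.

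Concretely, first I would invoke Theorem \ref{th:classification}(iii): since $B$ is a minimal locally $5^{th}$ power set with $|B| = \frac{3(q+1)}{2} = 9$, the associated point set $\mathcal{S}$ in $\PG(\F_q^k)$ is a minimal blocking set contained in a plane $\PG(\F_5^2) \subseteq \PG(\F_5^k)$, i.e.\ (after identifying that plane with $\PG(\F_5^3)$) a minimal planar blocking set of size $9$. By the classification proposition just cited, $\mathcal{S}$ is a projective triangle, and by uniqueness up to $\mathrm{PGL}(3,5)$ it is $\mathrm{PGL}(3,5)$-equivalent to the explicit point set of Proposition \ref{th:constructionprojectivetriangle} with $Q = \{1,4\}$, namely
\[
\{\langle(0,1,-1)\rangle_{\fq},\langle(0,1,-4)\rangle_{\fq},\langle(-1,0,1)\rangle_{\fq},\langle(-4,0,1)\rangle_{\fq},\langle(1,-1,0)\rangle_{\fq},\langle(1,-4,0)\rangle_{\fq}\} \cup \{\langle(1,0,0)\rangle_{\fq},\langle(0,1,0)\rangle_{\fq},\langle(0,0,1)\rangle_{\fq}\}.
\]
Since $-1 = 4$ and $-4 = 1$ in $\F_5$, this set equals, up to scaling representatives,
\[
\{\langle(0,1,4)\rangle_{\fq},\langle(0,1,1)\rangle_{\fq},\langle(4,0,1)\rangle_{\fq},\langle(1,0,1)\rangle_{\fq},\langle(1,4,0)\rangle_{\fq},\langle(1,1,0)\rangle_{\fq},\langle(1,0,0)\rangle_{\fq},\langle(0,1,0)\rangle_{\fq},\langle(0,0,1)\rangle_{\fq}\},
\]
which (after possibly rescaling $(0,1,4)\sim(0,4,1)$ etc.) one checks is exactly the point set associated to $\{\overline p_1,\overline p_2,\overline p_3,\overline p_1\overline p_2,\overline p_1\overline p_3,\overline p_2\overline p_3,\overline p_1\overline p_2^4,\overline p_1\overline p_3^4,\overline p_2\overline p_3^4\}$ under the convention of Section \ref{pointset}. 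Then Proposition \ref{prop:reduceprimelength} lets me assume $k = 3$ (the set is $2$-dimensional, hence geometrically $q$-equivalent to one on exactly $3$ primes $\overline p_1,\overline p_2,\overline p_3$), and the definition of geometric $q$-equivalence together with Theorem \ref{th:propertygeometricequiv} gives the desired "if and only if": $B$ is a minimal locally $5^{th}$ power set of size $9$ iff $\mathcal{S}$ is $\mathrm{PGL}(3,5)$-equivalent to the displayed triangle iff $B$ is geometrically $5$-equivalent to the exhibited set. For the converse direction, Proposition \ref{th:constructionsecondminimalqthpower} (with $q=5$, $Q=\{1,4\}$, and $q-s \in \{4,1\}$) already verifies that this explicit set is a minimal locally $5^{th}$ power set of size $9$, so the reverse implication needs no new work.

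The main obstacle, and the only genuinely computational part, is the bookkeeping in the middle step: matching the coordinate vectors coming from the projective-triangle construction of Proposition \ref{th:constructionprojectivetriangle} against the exponent vectors of the monomials $\overline p_1^{a}\overline p_2^{b}\overline p_3^{c}$, keeping careful track that projective points are defined only up to nonzero $\F_5$-scalars while exponents are only defined mod $5$ (and lie in $\{0,1,2,3,4\}$). One must also record that the specific representative set displayed in the statement is one legitimate choice within its $\mathrm{PGL}(3,5)$-orbit — it suffices to exhibit a single element of $\mathrm{PGL}(3,5)$ (for instance a suitable monomial/permutation matrix, possibly composed with $X_3 \mapsto -X_3$) carrying the Hirschfeld triangle to the set associated to $\{\overline p_1,\overline p_2,\overline p_3,\overline p_1\overline p_2,\overline p_1\overline p_3,\overline p_2\overline p_3,\overline p_1\overline p_2^4,\overline p_1\overline p_3^4,\overline p_2\overline p_3^4\}$. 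Everything else is a direct citation of Theorem \ref{th:classification}(iii), Theorem \ref{th:propertygeometricequiv}, Proposition \ref{prop:reduceprimelength}, Proposition \ref{th:constructionsecondminimalqthpower}, and the stated uniqueness of the minimal blocking set of $\PG(\F_5^3)$.
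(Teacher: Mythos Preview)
Your proposal is correct and follows essentially the same route as the paper, which states the proposition without a written proof, leaving it as an immediate consequence of the uniqueness (up to $\mathrm{PGL}(3,5)$) of the minimal non-trivial blocking set of size $9$ in $\PG(\F_5^3)$ together with the dictionary established in Theorem~\ref{mainresult}, Corollary~\ref{minimalblockingandminimalpowerset}, Theorem~\ref{th:classification}(iii), and Proposition~\ref{th:constructionprojectivetriangle}. Your coordinate check is also fine: note that $(4,0,1)=4\cdot(1,0,4)$ in $\F_5$, so the Hirschfeld triangle already coincides \emph{as a set of projective points} with the point set associated to the displayed monomial set, and no nontrivial element of $\mathrm{PGL}(3,5)$ is needed.
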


\subsection{For $q=7$}
Unlike the case $q \in \{3,5\}$, when $q=7$, it is possible to find a non-trivial minimal blocking set $\mathcal{S}$ of $\PG(\F_{7}^{3})$, that is not $\mathrm{PGL}(3,7)$-equivalent to the projective triangle. This construction is related to the Hessian configuration of $\PG(\F_{7}^{3})$, that is, the set $\mathcal{C}$ of nine points
\[
\mathcal{C}= \left\{ \begin{array}{ccc}
     (1,-1, 0) \rangle_{\fq}, & \langle (0,1,-1)\rangle_{\fq}, &\langle (-1, 0, 1) \rangle_{\fq}  \\
\langle (1,-\omega, 0) \rangle_{\fq}, & \langle(0,1,-\omega) \rangle_{\fq}, & \langle (-\omega, 0, 1) \rangle_{\fq} \\
\langle (1,-\omega^2, 0) \rangle_{\fq}, &\langle(0,1,-\omega^2) \rangle_{\fq}, &\langle (-\omega^2, 0, 1) \rangle_{\fq} 
\end{array}\right\},
\]
where $\omega$ is a cubic root of unity in $\F_{7}$ (for instance, one can take $\omega=2$). We can find a blocking set of cardinality $\frac{3(7+1)}{2} = 12$, related to the Hessian, in the following way:

There are 12 trisecants $\{\ell_1, \ell_{2}, \ldots,\ell_{12}\}$ of $\mathcal{C}$, i.e. lines of $\PG(\F_7^3)$ meeting $\mathcal{C}$ in exactly three points. The set of points $\mathcal{S}=\{\ell_1^{\perp}, \ell_{2}^{\perp}, \ldots, \ell_{12}^{\perp}\}$ forms a non-trivial minimal blocking set of $\PG(2,7)$ having size $12$ that is not equivalent under the action of $\mathrm{PGL}(3,q)$ to a projective triangle, see \cite[Theorem 4]{coolsaet2022classification}. A representation of $\mathcal{S}$ is the following 
\begin{equation} \label{eq:trisecants}
\mathcal{S}= \left\{ \begin{array}{cccc}
     (1,0, 0) \rangle_{\fq}, & \langle (0,1,0)\rangle_{\fq}, &\langle (0, 0, 1) \rangle_{\fq}, & \langle (1, 1, 1) \rangle_{\fq}  \\
\langle (\omega^2,\omega, 1) \rangle_{\fq}, & \langle(\omega^2,1,\omega) \rangle_{\fq}, & \langle (\omega, \omega, 1) \rangle_{\fq}, & \langle (1, \omega, 1) \rangle_{\fq} \\
\langle (\omega, 1,1) \rangle_{\fq}, &\langle(\omega,1,\omega) \rangle_{\fq}, &\langle (1,\omega,\omega) \rangle_{\fq}, & \langle (1,1,\omega) \rangle_{\fq}  
\end{array}\right\}.
\end{equation}

\begin{proposition} [see \textnormal{\cite[Theorem 4]{coolsaet2022classification}}] \label{th:classq7}
 The point set $\mathcal{S}$ as in \eqref{eq:trisecants} is minimal blocking set of $\PG(\F_{7}^{3})$ of minimum size $12$ that is not $\mathrm{PGL}(3,7)$ equivalent to a projective triangle.
\end{proposition}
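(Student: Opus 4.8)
The statement is essentially a citation of \cite[Theorem 4]{coolsaet2022classification}, so the plan is not to reprove the classification of minimal blocking sets of $\PG(\F_7^3)$ from scratch, but to verify that the concrete point set $\mathcal{S}$ displayed in \eqref{eq:trisecants} does what is claimed: it is a blocking set, it has the right size, it is minimal, and it is not $\mathrm{PGL}(3,7)$-equivalent to the projective triangle of Proposition \ref{th:constructionprojectivetriangle}. First I would recall the Hessian configuration $\mathcal{C}$ and its combinatorics: $\mathcal{C}$ is a $(9_4,12_3)$ configuration, meaning its $9$ points lie on exactly $12$ lines each containing $3$ of them (the trisecants $\ell_1,\ldots,\ell_{12}$), and every point of $\mathcal{C}$ lies on exactly $4$ trisecants. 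Dualizing via $\perp$, the $12$ points $\ell_i^{\perp}$ form a set $\mathcal{S}$ dual to the $9$ lines $\mathcal{P}^{\perp}$ for $\mathcal{P}\in\mathcal{C}$, and the incidence-preserving property of $\perp$ translates ``every point of $\mathcal{C}$ lies on $4$ trisecants'' into ``every one of the $9$ lines $\mathcal{P}^{\perp}$ contains $4$ points of $\mathcal{S}$.''

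The key step is to show $\mathcal{S}$ is a blocking set of $\PG(\F_7^3)$, i.e.\ every line meets $\mathcal{S}$. Here I would use the dual formulation: a line $m$ misses $\mathcal{S}$ iff the point $m^{\perp}$ lies on no trisecant of $\mathcal{C}$, i.e.\ $m^{\perp}$ is not one of the $\ell_i$. But this is not immediately a contradiction — what one actually needs is that the $12$ trisecants of $\mathcal{C}$ \emph{themselves} form a dual blocking set, equivalently that $\mathcal{C}$ meets every line of $\PG(\F_7^3)$ in a way that forces a covering. The clean route is the direct one: the $12$ trisecants of the Hessian, as a line set, cover all $57$ points of $\PG(\F_7^3)$ — this is the classical fact that the trisecants of the nine inflection points of a nonsingular cubic form a dual blocking set — and dualizing gives that $\mathcal{S}$ blocks every hyperplane. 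Concretely, I would just verify by the explicit coordinates in \eqref{eq:trisecants} (with $\omega=2$, so $\omega^2=4$) that every line $a X_1 + b X_2 + c X_3 = 0$ of $\PG(\F_7^3)$ contains at least one of the twelve listed points; this is a finite check over the $57$ lines, and the structure (three points on each of the nine ``coordinate-type'' lines $\mathcal{P}^{\perp}$, plus the symmetry under the Hessian group) makes it short. Since $|\mathcal{S}| = 12 = \frac{3(7+1)}{2}$ and Proposition \ref{th:secondminimumsizeblockingset} says every non-trivial blocking set of $\PG(\F_7^3)$ has at least $12$ points with planar equality, $\mathcal{S}$ is automatically of minimum size; minimality then follows because a blocking set of $\PG(\F_7^3)$ of the minimum size $\frac{3(q+1)}{2}$ cannot properly contain a smaller blocking set (any proper blocking subset would have size $<12$, contradicting the bound), and non-triviality follows since $\mathcal{S}$ contains no line (a line has $8$ points, and no $8$ of the listed points are collinear — again a finite check, or: a trivial blocking set containing a line has the line's $8$ points plus at least one more making $\geq 9$, but one checks directly the four points on any $\mathcal{P}^{\perp}$ are all of $\mathcal{S}\cap\mathcal{P}^{\perp}$, so no line lies in $\mathcal{S}$).

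Finally, for inequivalence to the projective triangle: a projective triangle has three distinguished ``vertex'' points each lying on two of the three sides, and each side carries $\frac{q-1}{2}+2 = 5$ points of $\mathcal{S}$ when $q=7$; so a projective triangle has three lines each meeting it in $5$ points. For the Hessian set $\mathcal{S}$ in \eqref{eq:trisecants}, I would show the maximum number of collinear points is $4$: this is exactly the dual of the statement that no point of $\PG(\F_7^3)$ lies on $5$ of the $12$ trisecants, which holds because each point of $\mathcal{C}$ lies on exactly $4$ trisecants and no other point lies on more than $\ldots$ — more carefully, a point on $k$ trisecants dualizes to a line meeting $\mathcal{S}$ in $k$ points, and one verifies (finite check) that the line-intersection-multiplicity spectrum of $\mathcal{S}$ is supported on $\{0,1,2,4\}$, missing $5$ entirely. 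Since the two sets have different intersection spectra with lines, and $\mathrm{PGL}(3,7)$ preserves such spectra, $\mathcal{S}$ cannot be $\mathrm{PGL}(3,7)$-equivalent to a projective triangle. The main obstacle is purely computational: organizing the finite verification over the $57$ points and $57$ lines of $\PG(\F_7^3)$ efficiently, ideally by exploiting the order-$216$ Hessian collineation group acting on $\mathcal{C}$ (hence on $\mathcal{S}$) so that only orbit representatives of lines need to be checked. Since the result is quoted from \cite{coolsaet2022classification}, it suffices in the paper to record the coordinates, note these verifications, and cite that reference for the full classification argument.
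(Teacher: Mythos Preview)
The paper gives no proof of this proposition at all; it is stated purely as a citation of \cite{coolsaet2022classification}, and the paper immediately moves on to apply it. Your proposal therefore goes well beyond what the paper does: you sketch an actual direct verification that $\mathcal{S}$ is a size-$12$ minimal blocking set inequivalent to the projective triangle.

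Your outline is essentially correct and the strategy is sound --- blocking via duality (the $12$ trisecants of the Hessian cover $\PG(\F_7^3)$), minimality from the Blokhuis bound $3(q+1)/2$ together with the absence of a full line in $\mathcal{S}$, and inequivalence via the line-intersection spectrum. Two small corrections. First, your claimed spectrum $\{0,1,2,4\}$ cannot be right: a $0$-secant would contradict $\mathcal{S}$ being a blocking set. The actual spectrum is $\{1,2,4\}$, which follows from the standard double counts (with $a_i$ the number of $i$-secants: $\sum a_i=57$, $\sum i\,a_i=12\cdot 8$, $\sum \binom{i}{2}a_i=\binom{12}{2}$, together with $a_4=9$ and $a_i=0$ for $i\geq 5$, force $a_3=0$, $a_2=12$, $a_1=36$). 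Second, the absence of $5$-secants does not need a finite check: dually, a point $P\notin\mathcal{C}$ on $k$ trisecants sees $3k$ distinct points of $\mathcal{C}$, so $k\leq 3$; a point $P\in\mathcal{C}$ sees $2k$ others, so $k\leq 4$. This already gives that the maximal line intersection with $\mathcal{S}$ is $4$, whereas the projective triangle has three $5$-secants, and the inequivalence follows.
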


Consequently, we obtain a different behavior of minimal locally $7^{th}$ power set of the second smallest size $\lvert B \rvert=3(q+1)/2$, in contrast to the case when $q \in \{3,5\}$ (cf. \ref{th:classq3} and \ref{th:classq5}).

\begin{theorem} \label{th:upperboundqthpowerset}
Let $p_1,p_2,p_3$ be three distinct primes and $\omega$ be a cubic root of the unity in $\F_7$. Then, the set \[B_{1} = \{p_1,p_2,p_3,p_1p_2^3,p_1p_3^3,p_2p_3^3,p_1p_2^{5},p_1p_3^{5},p_2p_3^{5},p_1p_2^{6},p_1p_3^{6},p_2p_3^{6}\},\]
and the set
\[B_{2} = \{p_1,p_2,p_3,p_1p_2p_3, p_1^{\omega^2}p_2^{\omega}p_3, p_1^{\omega^2}p_2p_3^{\omega}, p_1^{\omega}p_2^{\omega}p_3,p_1p_2^{\omega}p_3,p_1^{\omega}p_2p_3, p_1^{\omega}p_2p_3^{\omega},p_1p_2^{\omega}p_3^{\omega},p_1p_2p_3^{\omega}\}\]
are locally $7^{th}$ power sets of minimum size $3(q+1)/2=12$, that are not geometrically $7$-equivalent.  
\end{theorem}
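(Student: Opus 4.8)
The plan is to carry both sets into the projective plane $\PG(\F_7^3)$ via Theorem~\ref{mainresult}. Since $B_1$ and $B_2$ are written with the same three primes $p_1,p_2,p_3$, their associated point sets $\mathcal{S}_1,\mathcal{S}_2$ (formed as in Section~\ref{pointset}) both lie in $\PG(\F_7^3)$, so $k=3$; moreover none of the twelve integers in $B_1$ or in $B_2$ is a perfect $7^{th}$ power (in each factorization all prime exponents lie in $\{0,1,\ldots,6\}$ and are not all zero) and the twelve integers in each set are pairwise distinct, so $|B_i|=|\pi_q(B_i)|=|\mathcal{S}_i|=12$. I would then compute each $\mathcal{S}_i$ and match it to a blocking set already described in the text. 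For $B_1$: the nonzero squares of $\F_7$ are $Q=\{1,2,4\}$, so $\{q-s:s\in Q\}=\{3,5,6\}$, exactly the exponents appearing in $B_1$; hence $\mathcal{S}_1$ is the union of $\langle(1,0,0)\rangle,\langle(0,1,0)\rangle,\langle(0,0,1)\rangle$ with the points $\langle(1,-s,0)\rangle$, $\langle(1,0,-s)\rangle$, $\langle(0,1,-s)\rangle$ for $s\in Q$. Because $Q$ is a subgroup of $\F_7^{\ast}$ we have $\langle(1,0,-s)\rangle=\langle(-s^{-1},0,1)\rangle$ and $\{-s^{-1}:s\in Q\}=\{-s:s\in Q\}$, so $\mathcal{S}_1$ is precisely the projective triangle of Proposition~\ref{th:constructionprojectivetriangle} (one may also verify its three defining properties directly, using that $Q$ is closed under multiplication). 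By Proposition~\ref{th:projectivetriangleminimal}, $\mathcal{S}_1$ is a minimal non-trivial blocking set of $\PG(\F_7^3)$ of size $12$, so Theorem~\ref{mainresult} and Corollary~\ref{minimalblockingandminimalpowerset} show that $B_1$ is a minimal locally $7^{th}$ power set with $|B_1|=12$.

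For $B_2$: take $\omega=2$ (so $\omega^2=4$) as a cube root of unity in $\F_7$ and compute $\mathcal{S}_2$ term by term; comparing with \eqref{eq:trisecants} one sees that $\mathcal{S}_2$ equals the trisecant-dual configuration $\mathcal{S}$ of the Hessian displayed there. By Proposition~\ref{th:classq7} this is a minimal non-trivial blocking set of $\PG(\F_7^3)$ of size $12$ that is \emph{not} $\mathrm{PGL}(3,7)$-equivalent to a projective triangle, so again by Theorem~\ref{mainresult} and Corollary~\ref{minimalblockingandminimalpowerset} the set $B_2$ is a minimal locally $7^{th}$ power set with $|B_2|=12$. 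Since $12=\tfrac{3(q+1)}{2}$ for $q=7$, Proposition~\ref{th:secondminimumsizeblockingset} identifies this as the minimum possible size of a non-trivial blocking set, so both $B_1$ and $B_2$ realize the claimed minimum size.

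Finally, a geometric $7$-equivalence between $B_1$ and $B_2$ would, by definition, yield some $\Psi\in\mathrm{PGL}(3,7)$ with $\Psi(\mathcal{S}_1)=\mathcal{S}_2$; but $\mathcal{S}_1$ is a projective triangle while $\mathcal{S}_2$, by Proposition~\ref{th:classq7}, is not $\mathrm{PGL}(3,7)$-equivalent to any projective triangle --- a contradiction. Hence $B_1$ and $B_2$ are not geometrically $7$-equivalent. The argument uses no estimates; the only steps requiring care are the coordinate bookkeeping that identifies $\mathcal{S}_1$ with the projective triangle of Proposition~\ref{th:constructionprojectivetriangle} (where the subgroup structure of $Q$ and the normalization of the projective points both matter) and that identifies $\mathcal{S}_2$ with \eqref{eq:trisecants}, while the genuinely substantial input --- the existence of a minimal blocking set of $\PG(\F_7^3)$ of size $12$ inequivalent to the projective triangle --- is imported wholesale from Proposition~\ref{th:classq7}.
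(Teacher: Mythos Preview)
Your proof is correct and follows the paper's (implicit) argument: the theorem is stated without proof precisely because it is an immediate consequence of the preceding Propositions~\ref{th:constructionprojectivetriangle}/\ref{th:projectivetriangleminimal} (for $B_1$) and Proposition~\ref{th:classq7} together with \eqref{eq:trisecants} (for $B_2$), combined with Theorem~\ref{mainresult} and Corollary~\ref{minimalblockingandminimalpowerset}. Your explicit verification that $\mathcal{S}_1$ coincides with the projective triangle (using that $Q$ is a multiplicative subgroup so that $\langle(1,0,-s)\rangle=\langle(-s^{-1},0,1)\rangle$ ranges over the points $\langle(-t,0,1)\rangle$, $t\in Q$) and that $\mathcal{S}_2$ matches \eqref{eq:trisecants} for $\omega=2$ is exactly the bookkeeping the paper omits.
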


\section{$3$-dimensional locally $q^{th}$ power set}\label{section:3dim}

This last section is to provide insights into $3$-dimensional locally $q^{th}$ power sets for $q = 3, 5$.
\begin{proposition}
Let $q \in\{3, 5\}$ and $\mathcal{S}$ be a $3$-dimensional minimal blocking set of $\PG(\F_{q}^{4})$. Then, $\lvert \mathcal{S} \rvert \geq 2q+1$.
\end{proposition}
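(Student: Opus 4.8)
The plan is to combine the known lower bounds for blocking sets of $\PG(\F_q^k)$ with two further geometric observations. First, a $3$-dimensional minimal blocking set $\mathcal{S}$ of $\PG(\F_{q}^{4})$ is non-trivial: if it contained a line $\ell$, then $\ell$ would be a blocking set of $\PG(\F_{q}^{4})$ with $\ell\subsetneq\mathcal{S}$ (proper, since $\dim\ell=1\neq 3=\dim\mathcal{S}$), contradicting minimality. Hence Proposition \ref{th:secondminimumsizeblockingset} gives $|\mathcal{S}|\geq\frac{3(q+1)}{2}$, and equality there would force $\mathcal{S}$ to be planar, i.e.\ $2$-dimensional, a contradiction; so $|\mathcal{S}|>\frac{3(q+1)}{2}$. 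For $q=3$ this already gives $|\mathcal{S}|\geq 7=2q+1$.

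For $q=5$ the bound so far is only $|\mathcal{S}|\geq 10$, so the remaining task is to exclude $|\mathcal{S}|=10$, and I would use two auxiliary facts. (a) Every plane $\pi$ satisfies $|\pi\cap\mathcal{S}|\leq|\mathcal{S}|-q$: since $\mathcal{S}$ spans the space it is not contained in $\pi$, and $\mathcal{S}\cap\pi$ cannot be a blocking set of $\pi$ with respect to lines (otherwise it would be a blocking set of $\PG(\F_q^4)$ properly contained in $\mathcal{S}$), so some line $m\subseteq\pi$ avoids $\mathcal{S}$; each of the $q$ planes through $m$ other than $\pi$ then contains a point of $\mathcal{S}\setminus\pi$, and these $q$ points are pairwise distinct, whence $|\mathcal{S}\setminus\pi|\geq q$. (b) For each $P\in\mathcal{S}$ there is a plane $\pi_P$ with $\pi_P\cap\mathcal{S}=\{P\}$; taking $\pi_P$ as the hyperplane at infinity, $\mathcal{S}\setminus\{P\}$ lies in $\AG(3,q)$ and blocks every affine plane except the $\delta_P-1$ ones coming from the tangent planes at $P$ (planes meeting $\mathcal{S}$ only in $P$) distinct from $\pi_P$, where $\delta_P$ denotes their number; adjoining one point to each of these $\delta_P-1$ exceptional affine planes and applying the classical theorem of Jamison and of Brouwer--Schrijver (a point set of $\AG(3,q)$ meeting every affine plane has at least $3(q-1)+1$ points) gives $|\mathcal{S}|-1+(\delta_P-1)\geq 3(q-1)+1$, that is $|\mathcal{S}|\geq 3q-\delta_P$. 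In particular, if some $P\in\mathcal{S}$ has $\delta_P\leq q-1$, then $|\mathcal{S}|\geq 2q+1$ and we are done.

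The main obstacle is the leftover configuration $q=5$, $|\mathcal{S}|=10$, with $\delta_P\geq 5$ for every $P\in\mathcal{S}$: the moment identities $\sum_{\pi}|\pi\cap\mathcal{S}|=|\mathcal{S}|(q^2+q+1)$ and $\sum_{\pi}\binom{|\pi\cap\mathcal{S}|}{2}=\binom{|\mathcal{S}|}{2}(q+1)$, even together with (a), stay consistent with it, so a genuinely geometric input is needed. The plan is to project from such a point $P$: the image $\mathcal{S}'$ of $\mathcal{S}\setminus\{P\}$ in the quotient plane $\PG(\F_q^3)$ spans $\PG(\F_q^3)$ (if $\mathcal{S}\setminus\{P\}$ spanned only a plane $\sigma$ of $\PG(\F_q^4)$, then $\mathcal{S}\cap\sigma=\mathcal{S}\setminus\{P\}$, being a blocking set of $\sigma$, would block all of $\PG(\F_q^4)$, contradicting minimality), has at most $|\mathcal{S}|-1=9$ points, meets every line in at most $|\mathcal{S}|-q-1=4$ points by (a), and avoids exactly $\delta_P\geq 5$ of its lines. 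A finite case analysis over the admissible $\delta_P$ and the combinatorial types of such a spanning point set of $\PG(\F_5^3)$ then yields a contradiction; equivalently, one may invoke the classification of the small minimal blocking sets of $\PG(\F_5^3)$ in \cite{coolsaet2022classification}, according to which every minimal blocking set of $\PG(\F_5^3)$ of size at most $10$ is a line or is contained in a plane, hence not $3$-dimensional.
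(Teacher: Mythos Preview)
The paper does not supply a proof of this proposition; it is stated as a known fact (with only the sharpness construction attributed to Tallini), so there is no ``paper's proof'' to compare against. I can therefore only assess your argument on its own merits.

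Your treatment of $q=3$ is clean and correct: non-triviality plus Proposition~\ref{th:secondminimumsizeblockingset} gives $|\mathcal{S}|>6$, hence $|\mathcal{S}|\ge 7=2q+1$. Likewise, steps (a) and (b) are valid for general $q$: the plane bound $|\pi\cap\mathcal{S}|\le|\mathcal{S}|-q$ and the Jamison/Brouwer--Schrijver inequality $|\mathcal{S}|\ge 3q-\delta_P$ are both argued correctly.

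The genuine gap is in the $q=5$ case. After (b) you are left with the configuration $|\mathcal{S}|=10$ and $\delta_P\ge 5$ for every $P$, and you do not actually eliminate it. You correctly observe that the first two moment identities are consistent with this configuration, then outline a projection to the quotient plane and assert that ``a finite case analysis \ldots\ yields a contradiction'' without carrying it out. The fallback you offer does not work as written: \cite{coolsaet2022classification} classifies small minimal blocking sets in projective \emph{planes} $\PG(\F_q^3)=\PG(2,q)$, not in $\PG(\F_q^4)=\PG(3,q)$, and your sentence ``every minimal blocking set of $\PG(\F_5^3)$ of size at most $10$ is a line or is contained in a plane'' is vacuous in $\PG(2,5)$ (everything there lies in a plane). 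If you intended $\PG(\F_5^4)$, then you are asserting precisely the statement to be proved, and the cited reference does not cover it. To close the argument you would need either to execute the promised case analysis on the projected configuration in $\PG(2,5)$ (a spanning set of at most $9$ points with all line-intersections $\le 4$ and at least $5$ external lines), or to cite a source that actually bounds non-planar minimal blocking sets in $\PG(3,5)$, such as Heim's work \cite{heim1996blockierende} on blocking sets in higher-dimensional spaces.
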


The above bound is sharp. Constructions of point sets having size meeting the above bound were provided by Tallini in \cite{tallini1991blocking}. 
\begin{definition}
If $\mathcal{T}$ is a set of $q + 1$ points, then a point $\mathcal{P} \notin \mathcal{T}$ is called a \emph{nucleus} of $\mathcal{T}$ if every line through $\mathcal{P}$
contains exactly one point of $\mathcal{T}$.
\end{definition}

\begin{proposition} [see \textnormal{\cite[Example 1]{tallini1991blocking}}] \label{prop:blockinglength3}
Let $\ell$ be a line in $\PG(\F_{q}^{4})$, $\pi$ be a plane in $\PG(\F_{q}^{4})$ not containing $\ell$, and $T$ be a set of $(q+1)$ non-collinear points of $\pi$ having the point $\pi \cap \ell$ as nucleus. Then, the point set 
\[
\mathcal{S}=(\ell \setminus \pi) \cup \mathcal{T} \subseteq \PG(\F_{q}^{4}),
\]
is a 3-dimensional minimal blocking set having size $2q+1$.
\end{proposition}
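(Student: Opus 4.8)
The plan is to work in $\PG(\F_q^4)$, viewed as a $3$-dimensional projective space whose hyperplanes are the copies of $\PG(\F_q^3)$ it contains (so $\pi$ is itself such a hyperplane). Put $N := \ell \cap \pi$; since $\ell \not\subseteq \pi$ this is a single point, and by hypothesis $N$ is the nucleus of $\mathcal{T}$, so $N \notin \mathcal{T}$ and every line of $\pi$ through $N$ meets $\mathcal{T}$ in exactly one point. First I would record the cheap facts: $\ell \setminus \pi$ has $q$ points and is disjoint from $\pi \supseteq \mathcal{T}$, so $\lvert \mathcal{S}\rvert = q + (q+1) = 2q+1$; and since $\mathcal{T}$ is non-collinear it spans $\pi$, hence $\mathcal{S}$ spans $\langle \pi, P\rangle = \PG(\F_q^4)$ for any $P \in \ell \setminus \pi$, i.e. $\mathcal{S}$ is $3$-dimensional.

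To show $\mathcal{S}$ is a blocking set, I would run through an arbitrary hyperplane $H$ by cases. If $H = \pi$, then $H$ meets $\mathcal{T} \subseteq \mathcal{S}$. If $H \neq \pi$ but $\ell \subseteq H$, then $H \supseteq \ell \setminus \pi \subseteq \mathcal{S}$. If $H \neq \pi$ and $\ell \not\subseteq H$, then $H \cap \ell$ is a single point $P_0$: if $P_0 \notin \pi$ then $P_0 \in \ell \setminus \pi \subseteq \mathcal{S}$; and if $P_0 \in \pi$ then $P_0 = N$, so $N \in H$ and the line $m := H \cap \pi$ passes through $N$, whence the nucleus hypothesis forces $m$ — and therefore $H$ — to contain a point of $\mathcal{T}$. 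Thus $H \cap \mathcal{S} \neq \emptyset$ in every case.

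For minimality I would fix $P \in \mathcal{S}$ and build a hyperplane meeting $\mathcal{S}$ in $P$ only, splitting into two cases. If $P \in \ell \setminus \pi$: a non-collinear set of $q+1$ points is not a blocking set of the plane $\pi$ (by Proposition \ref{th:firstboundblocking}, since equality there forces a line), so some line $m$ of $\pi$ is disjoint from $\mathcal{T}$, and automatically $N \notin m$ because every line through $N$ meets $\mathcal{T}$. Then $H := \langle P, m\rangle$ has $H \cap \pi = m$, so $H \cap \mathcal{T} = \emptyset$; and $N \notin m$ forces $\ell \not\subseteq H$, so $H \cap \ell = \{P\}$ and $H \cap \mathcal{S} = \{P\}$. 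If $P \in \mathcal{T}$: the line $m := NP \subseteq \pi$ passes through $N$ and, by the nucleus property, meets $\mathcal{T}$ exactly in $P$; among the $q+1$ hyperplanes through $m$, one is $\pi$ and one is $\langle \ell, m\rangle \supseteq \ell$, so I may pick $H$ among the remaining $q - 1 \geq 1$. Then $H \cap \pi = m$ gives $H \cap \mathcal{T} = \{P\}$, while $N \in m \subseteq H$ together with $\ell \not\subseteq H$ gives $H \cap \ell = \{N\}$, so $H$ avoids $\ell \setminus \pi$ and $H \cap \mathcal{S} = \{P\}$. Assembling these establishes minimality.

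The one step I expect to require an idea rather than bookkeeping is the essentiality of the points of $\ell \setminus \pi$: one has to notice it is equivalent to $\mathcal{T}$ possessing an external line inside $\pi$, and then recognize that the two not-yet-used hypotheses on $\mathcal{T}$ — that it has $q+1$ points and is not a line — yield such a line at once via the classification of minimum-size blocking sets of a plane. Everything else is a disciplined incidence–dimension argument in $\PG(\F_q^4)$, in which each hypothesis ($\ell \not\subseteq \pi$, $N$ a nucleus of $\mathcal{T}$, $\mathcal{T}$ non-collinear) is consumed exactly once in a clearly localized spot.
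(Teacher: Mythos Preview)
Your argument is correct and complete. The paper does not give its own proof of this proposition: it simply cites Tallini \cite{tallini1991blocking} and moves on to use the statement to build explicit examples. So there is nothing to compare your route against on the paper's side; your write-up in fact supplies a proof the paper omits.

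A couple of minor remarks. Your use of Proposition~\ref{th:firstboundblocking} to produce an external line $m$ of $\mathcal{T}$ in $\pi$ is exactly the right leverage point, and your observation that necessarily $N\notin m$ (because lines through the nucleus meet $\mathcal{T}$) is what makes $\langle P,m\rangle$ intersect $\ell$ only in $P$; this is clean. In Case~2 you could even avoid counting the pencil of planes through $m=NP$: picking any point $R\in\pi\setminus m$ with $R\notin\mathcal{T}$ (such $R$ exist since $|\mathcal{T}|=q+1<q^2+q+1$) and any $P'\in\ell\setminus\pi$, the plane $\langle m,P'\rangle$ already works unless it equals $\langle \ell,m\rangle$, but your pencil count ($q+1$ planes through $m$, minus $\pi$, minus $\langle\ell,m\rangle$, leaves $q-1\ge 1$) is just as quick and perfectly rigorous.
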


Using the above results, we can construct explicit examples of minimal 3-dimensional locally $q^{th}$-power set for $q \in \{3,5\}$ of size $2q+1$. Choose a coordinatization for $\PG(\F_{q}^{4})$ and let $\pi$ be the plane of $\mathrm{PG}(\F_q^4)$ having equation $X_3=0$. Consider the set 
\[
\mathcal{T}=\{\langle (1,t,t^2,0) \rangle_{\fq} \colon t \in \F_q\} \cup \{ \langle 0,0,1 \rangle\} \subseteq \pi.
\]
It is easy to see that the point set $\mathcal{T}$ has as nucleus the point $\mathcal{Q}=\langle (0,1,0,0) \rangle_{\F_q}$ and consider $\ell$ be the line having equation $X_1=X_3=0$. We have that $\{\mathcal{Q}\}=\ell \cap \pi$. Then $\pi,\ell$ and $\mathcal{T}$ are as in Theorem \ref{prop:blockinglength3}. 
This means that the point set
\[
\mathcal{S}=\{\langle (1,t,t^2,0) \rangle_{\fq} \colon t \in \F_q\} \cup \{\langle (0,1,0,0) \rangle_{\fq},\langle (0,0,1,0) \rangle_{\fq}\} \cup \{\langle  (0,1,0,t)\rangle_{\fq} \colon t \in \F_q^*\}
\]
is a $3$-dimensional minimal blocking set having size $2q+1$. Considering the set of integers associated to $\mathcal{S}$, we have the following theorem.

\begin{theorem}
Let $B$ be a minimal $3$-dimensional locally $q^{th}$ power set. Then $\lvert B \rvert \geq 2q+1$. Moreover, the sets 
\[
B_1=\{p_1,p_2,p_3,p_2p_3,p_1p_2^2p_3,p_2p_4,p_2p_4^2\}
\] and
\[
B_2=\{p_1,p_2,p_3,p_2p_3,p_1p_2^2p_3^4,p_1p_2^3p_3^4,p_1p_2^4p_3,p_2p_4,p_2p_4^2,p_2p_4^3,p_2p_4^4\}
\]
are minimal, $3$-dimensional, locally $3^{rd}$ power set and minimal, $3$-dimensional, locally $5^{th}$ power set, respectively.
\end{theorem}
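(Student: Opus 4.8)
The plan is to split the statement into its two assertions: the lower bound $\lvert B\rvert\ge 2q+1$, and the verification that $B_1,B_2$ are minimal $3$-dimensional locally $q^{th}$ power sets for $q=3,5$ respectively. For the lower bound, I would invoke Theorem~\ref{mainresult}: a minimal $3$-dimensional locally $q^{th}$ power set $B$ corresponds, via the point set $\mathcal{S}$ associated to $B$, to a minimal blocking set of $\PG(\F_q^k)$ that spans a $3$-dimensional projective subspace, and by Corollary~\ref{minimalblockingandminimalpowerset} we have $\lvert B\rvert=\lvert\mathcal{S}\rvert$. Since $\mathcal{S}$ is a blocking set of a $\PG(\F_q^4)$ sitting inside $\PG(\F_q^k)$ (after passing to the span), the preceding Proposition (the one stating $\lvert\mathcal{S}\rvert\ge 2q+1$ for $3$-dimensional minimal blocking sets of $\PG(\F_q^4)$) applies directly and gives $\lvert B\rvert\ge 2q+1$.

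For the construction part, I would follow the recipe already laid out in the text preceding the theorem. First I would record the point set
\[
\mathcal{S}=\{\langle (1,t,t^2,0)\rangle_{\fq}\colon t\in\F_q\}\cup\{\langle (0,1,0,0)\rangle_{\fq},\langle (0,0,1,0)\rangle_{\fq}\}\cup\{\langle (0,1,0,t)\rangle_{\fq}\colon t\in\F_q^*\}
\]
in $\PG(\F_q^4)$, which by Proposition~\ref{prop:blockinglength3} (with $\pi\colon X_3=0$, $\ell\colon X_1=X_3=0$, and $\mathcal{T}$ the conic-plus-point set with nucleus $\mathcal{Q}=\langle(0,1,0,0)\rangle_{\fq}$) is a $3$-dimensional minimal blocking set of size $2q+1$. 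Then I would specialize $q=3$ and $q=5$, write out the coordinate vectors explicitly, and read off the associated sets of integers using the dictionary of Subsection~\ref{pointset}: a point $\langle(\nu_1,\nu_2,\nu_3,\nu_4)\rangle_{\fq}$ corresponds to $p_1^{\nu_1}p_2^{\nu_2}p_3^{\nu_3}p_4^{\nu_4}$ with exponents taken as nonnegative representatives mod $q$. For $q=3$ this yields (up to reordering and up to the ambiguity of which representative one picks, i.e.\ geometric $q$-equivalence) the seven-element set $B_1$; for $q=5$ the eleven-element set $B_2$. Finally, Theorem~\ref{mainresult} together with Corollary~\ref{minimalblockingandminimalpowerset} transports "minimal $3$-dimensional blocking set of size $2q+1$" back to "minimal $3$-dimensional locally $q^{th}$ power set of size $2q+1$", and one checks $2\cdot3+1=7=\lvert B_1\rvert$ and $2\cdot5+1=11=\lvert B_2\rvert$.

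The only genuinely non-routine point is the bookkeeping in the last step: one must check that the explicitly displayed $B_1$ and $B_2$ really are (geometrically $q$-equivalent to) the integer sets coming from $\mathcal{S}$ for $q=3,5$, since the displayed exponents in the theorem statement have been reduced and the coordinates permuted relative to the canonical $\mathcal{S}$ above (for instance the line part $\langle(0,1,0,t)\rangle$ becomes $p_2p_4^t$, and the point $\langle(0,0,1,0)\rangle$ becomes $p_3$, matching the $p_2p_4,p_2p_4^2,\dots$ and $p_3$ entries). Since geometric $q$-equivalence is generated by $\mathrm{PGL}(k,q)$ and, by Proposition~\ref{prop:reduceprimelength} and Theorem~\ref{th:propertygeometricequiv}, preserves both the property of being a locally $q^{th}$ power set and minimality, it suffices to exhibit one permutation/scaling of coordinates carrying $\mathcal{S}$ to the point set associated with $B_1$ (resp.\ $B_2$); this is a finite check. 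No deeper obstacle arises, as the heavy lifting — the existence and minimality of the Tallini-type blocking set, and the blocking-set/locally-$q^{th}$-power-set correspondence — is already available from the cited results.
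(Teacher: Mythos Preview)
Your proposal is correct and follows essentially the same route as the paper: the lower bound is obtained by transporting the $3$-dimensional minimal blocking-set bound $\lvert\mathcal{S}\rvert\ge 2q+1$ through Theorem~\ref{mainresult} and Corollary~\ref{minimalblockingandminimalpowerset}, and the explicit sets $B_1,B_2$ are exactly the integer sets read off from the Tallini-type blocking set $\mathcal{S}$ built in the text via Proposition~\ref{prop:blockinglength3}. The paper's own argument is in fact just the one-line ``Considering the set of integers associated to $\mathcal{S}$, we have the following theorem,'' so your outline is, if anything, more explicit about the bookkeeping than the paper itself.
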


\subsection{Upper bound on minimal locally $q^{th}$ power set}
In contrast to the lower bounds on cardinality of minimal locally $q^{th}$ power sets, one also has upper bounds on the same cardinality.

\begin{proposition}
Let $\mathcal{S}$ be a minimal blocking set of $\PG(\F_{q}^{k})$ and $s$ denote the fractional part of $\sqrt{q}$. Then:
\begin{enumerate}
        \item if $k=3$, then $\lvert \mathcal{S}   \rvert \leq \begin{cases} q \sqrt{q}+1 - \frac{s(1-s)q}{4} \text{ ; for } q \neq 5 \\ q \sqrt{q}+1 \text{ ; for } q = 5. \end{cases}$ \vspace{2mm}
        
        \item if $k=4$, then $\lvert \mathcal{S} \rvert \leq q^2+1$. \vspace{2mm}
        
        \item if $k\geq 5$, then $\lvert \mathcal{S} \rvert < \sqrt{q^{k}}+1$.
\end{enumerate}
\end{proposition}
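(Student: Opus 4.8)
The statement repackages classical upper bounds on minimal blocking sets, so the plan is to reduce everything to the \emph{spanning} case and then quote the relevant finite-geometry theorems. The one genuinely self-contained ingredient is the reduction, which I would state as a lemma: if $\mathcal{S}$ is a minimal blocking set of $\PG(\F_q^k)$ spanning the subspace $\Sigma=\PG(W)$, with $\dim_{\F_q}W=d+1$, then $\mathcal{S}$ is a minimal blocking set of $\Sigma\cong\PG(\F_q^{d+1})$. For blocking: every hyperplane of $\Sigma$ is the trace on $\Sigma$ of a hyperplane of $\PG(\F_q^k)$, and the latter meets $\mathcal{S}\subseteq\Sigma$. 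For minimality: given $P\in\mathcal{S}$, a hyperplane $H$ of $\PG(\F_q^k)$ with $H\cap\mathcal{S}=\{P\}$ cannot contain $\Sigma$ (else $\mathcal{S}=\{P\}$, impossible since $|\mathcal{S}|\ge q+1$ by Proposition~\ref{th:firstboundblocking}), so $H\cap\Sigma$ is a hyperplane of $\Sigma$ meeting $\mathcal{S}$ only in $P$. Consequently a minimal blocking set of $\PG(\F_q^k)$ of projective dimension $d$ has size at most $U(d+1)$, where $U(m)$ denotes the largest size of a minimal blocking set of $\PG(\F_q^m)$; so it suffices to bound $U(m)$ for $2\le m\le k$.

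Now $U(2)=q+1$. For $U(3)$ I would cite the theorem of Bruen and Thas: a minimal blocking set of $\PG(\F_q^3)=\PG(2,q)$ has at most $q\sqrt q+1$ points, with the sharper estimate $q\sqrt q+1-\tfrac{s(1-s)q}{4}$ holding when $q$ is not a perfect square (and $q\ne 5$), $s$ being the fractional part of $\sqrt q$; this gives part~(i), since for $k=3$ the only possibilities are the line ($d=1$, size $q+1\le q\sqrt q+1$) and the spanning case $d=2$. For $U(4)$ I would cite the bound $|\mathcal{S}|\le q^2+1$ for minimal blocking sets of $\PG(\F_q^4)=\PG(3,q)$, attained by ovoids; since $q+1\le q\sqrt q+1\le q^2+1$ for all $q\ge 1$, this yields $U(4)=q^2+1$, i.e.\ part~(ii).

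For part~(iii), let $k\ge 5$ and let $\mathcal{S}$ have projective dimension $d$, so $|\mathcal{S}|\le U(d+1)$ with $2\le d+1\le k$. If $d+1\le 4$ then $U(d+1)\in\{q+1,\,q\sqrt q+1,\,q^2+1\}$, each of which is $<\sqrt{q^k}+1=q^{k/2}+1$ exactly because $1,\tfrac32,2<\tfrac k2$ once $k\ge 5$ (recall $q\ge 3$). If $d+1\ge 5$ I would invoke the general Heim-type bound $|\mathcal{S}|\le\sqrt{q^{d+1}}+1$ for minimal blocking sets of $\PG(\F_q^{d+1})$, which is moreover \emph{strict} once $d+1\ge 5$ (equality in the Bruen--Thas/Heim bound occurs only in the unital case $d+1=3$ with $q$ a square, and the ovoid case $d+1=4$); combining with $d+1\le k$ gives $|\mathcal{S}|<\sqrt{q^{d+1}}+1\le\sqrt{q^k}+1$. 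In every case $|\mathcal{S}|<\sqrt{q^k}+1$.

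The routine part is the dimension bookkeeping and the elementary chain $q+1\le q\sqrt q+1\le q^2+1<\sqrt{q^k}+1$ for $k\ge 5$. The substantive content---what is really being cited rather than proved---is the finite-geometry package: the Bruen--Thas planar bound and its non-square refinement, the $q^2+1$ bound in $\PG(3,q)$, and the general Heim-type bound together with the fact that it is sharp only in dimensions $2$ and $3$. The main obstacle, in the sense of the step one cannot shortcut, is establishing (or locating in the literature in exactly the needed form) this last bound and its strictness for $m\ge 5$; the mild nuisance is the exceptional value $q=5$, for which only the weaker planar bound $q\sqrt q+1$ is asserted.
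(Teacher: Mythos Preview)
Your proposal is correct; the paper itself does not prove this proposition but merely records it as a citation (Sz\H{o}nyi for the $k=3$, $q\neq 5$ case, and Bruen--Thas \cite{bruen1982hyperplane} for the remaining cases), so your reduction-to-spanning lemma and the dimension bookkeeping are sound elaborations that the paper omits entirely. Two minor attribution fixes: the sharpened planar bound with the $\tfrac{s(1-s)q}{4}$ correction term is due to Sz\H{o}nyi, not Bruen--Thas (the latter give only $q\sqrt q+1$); and what you call the ``Heim-type'' bound $|\mathcal{S}|\le\sqrt{q^{m}}+1$ in higher dimensions is precisely the Bruen--Thas theorem---Heim is cited in the paper only for the \emph{lower} bound $\tfrac{3(q+1)}{2}$, not for upper bounds.
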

The result above for $k = 3$ and $q \neq 5$ has been first shown in \cite{szHonyi2005large} and the rest appears in \cite[Theorem 1]{bruen1982hyperplane}. As a consequence we obtain the following.
\begin{corollary}
Let $B$ be a finite subset of integers not containing a perfect $q^{th}$ power that is a minimal locally $q^{th}$ power set. Let $s$ denote the fractional part of $\sqrt{q}$. Then:
\begin{enumerate}
        \item if $k=3$, then $\lvert B \rvert \leq \begin{cases} q \sqrt{q}+1 - \frac{s(1-s)q}{4} \text{ ; for } q \neq 5 \\ q \sqrt{q}+1 \text{ ; for } q = 5. \end{cases}$ \vspace{2mm}
        
        \item if $k=4$, then $\lvert B \rvert \leq q^2+1$. \vspace{2mm}
        
        \item if $k\geq 5$, then $\lvert B \rvert < \sqrt{q^{k}}+1$.
\end{enumerate}
\end{corollary}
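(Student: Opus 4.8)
The plan is to deduce the corollary from the preceding proposition by invoking the dictionary of Theorem \ref{mainresult}, with no additional combinatorial work. First I would set up notation: let $B$ be a minimal locally $q^{th}$ power set not containing a perfect $q^{th}$ power, let $k$ be the number of distinct primes dividing $\prod_{b \in B}\rad(|b|)$, and let $\mathcal{S} \subseteq \PG(\F_q^k)$ be the point set associated to $B$ as in Section \ref{pointset}. By Corollary \ref{minimalblockingandminimalpowerset}, the assumption that $B$ is a minimal locally $q^{th}$ power set is equivalent to $\mathcal{S}$ being a minimal blocking set of $\PG(\F_q^k)$ together with $|\mathcal{S}| = |B|$.

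Once this identification is in place, I would simply apply the preceding proposition to the minimal blocking set $\mathcal{S}$ of $\PG(\F_q^k)$, distinguishing the three cases on $k$: for $k = 3$ it yields $|B| = |\mathcal{S}| \leq q\sqrt q + 1 - \tfrac{s(1-s)q}{4}$ if $q \neq 5$ and $|B| \leq q\sqrt q + 1$ if $q = 5$; for $k = 4$ it yields $|B| \leq q^2 + 1$; and for $k \geq 5$ it yields $|B| < \sqrt{q^k} + 1$, where $s$ denotes the fractional part of $\sqrt q$ exactly as in the proposition.

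The last point to verify is that the ranges of $k$ in the two statements agree and that nothing is lost at small $k$: if $k \leq 2$, then $\mathcal{S}$ is a blocking set of $\PG(\F_q^2)$, so by Proposition \ref{th:firstboundblocking} it is a line and $|B| = q+1$, a case already governed by the cardinality classification in Theorem \ref{th:classification} and deliberately outside the scope of the proposition. Hence no case is omitted. I expect no real obstacle here: the entire content of the corollary is the translation furnished by Theorem \ref{mainresult} and Corollary \ref{minimalblockingandminimalpowerset}, the substantive input being the blocking-set bounds of \cite{szHonyi2005large} and \cite{bruen1982hyperplane} quoted in the proposition.
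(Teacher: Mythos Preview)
Your proposal is correct and matches the paper's approach exactly: the paper states the corollary immediately after the proposition with only the phrase ``As a consequence we obtain the following,'' treating it as a direct translation via Theorem \ref{mainresult} and Corollary \ref{minimalblockingandminimalpowerset}. Your additional remark about $k\leq 2$ is harmless but unnecessary, since the corollary only asserts bounds for $k\geq 3$.
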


When $k=4$, 3-dimensional minimal blocking sets of $\PG(\F_{q}^{4})$ having size $q^2+1$ are classified and they correspond to an ovoid \cite{bruen1982hyperplane}, which correspond to an elliptic quadric of $\PG(\F_{q}^{4})$ \cite{barlotti1955estensione,panella1955caratterizzazione}. These results are summarized in the following proposition. 

\begin{proposition}
A 3-dimensional minimal blocking set of $\PG(\F_{q}^4)$ having maximum size $q^2+1$, with $q$ odd, is an elliptic quadric. 
\end{proposition}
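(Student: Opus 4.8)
The plan is to deduce the statement by chaining the two classical results already alluded to in the paragraph preceding it. First I would observe that $\PG(\F_q^4)$ has projective dimension $3$, so a $3$-dimensional blocking set $\mathcal{S}$ spans the whole space; thus $\mathcal{S}$ is simply a minimal blocking set of $\PG(\F_q^4)=\PG(3,q)$ with respect to its planes. By the case $k=4$ of the upper bound recalled just above, that is \cite[Theorem 1]{bruen1982hyperplane}, every such minimal blocking set has at most $q^2+1$ points, and the extremal analysis there shows that a set attaining $q^2+1$ contains no three collinear points; hence $\mathcal{S}$ is a cap of $\PG(3,q)$. Since a cap of $\PG(3,q)$ has at most $q^2+1$ points for $q>2$, a cap meeting this bound is by definition an \emph{ovoid} of $\PG(3,q)$. (Conversely, any elliptic quadric is an ovoid, meets every plane in $1$ or $q+1$ points, and admits a tangent plane at each of its $q^2+1$ points, hence is a minimal blocking set of maximum size; so the correspondence is in fact a bijection, although only one direction is needed here.)

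Next I would invoke the Barlotti--Panella theorem \cite{barlotti1955estensione,panella1955caratterizzazione}, which asserts that for $q$ odd every ovoid of $\PG(3,q)$ is an elliptic quadric, i.e. projectively equivalent to the point set of a non-degenerate quadric of elliptic type (the unique, up to collineation, quadric of $\PG(3,q)$ containing $q^2+1$ points and no line). Combining this with the previous step, the set $\mathcal{S}$ is an elliptic quadric, which is the assertion. It is precisely here that the hypothesis ``$q$ odd'' is used: when $q$ is an odd power of $2$ the Tits ovoids provide ovoids of $\PG(3,q)$ that are not quadrics, so the statement genuinely fails in that range.

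The only delicate point --- and the step I expect to be the main obstacle if one wants a self-contained proof rather than a citation --- is the implication ``a minimal blocking set of $\PG(3,q)$ of maximum size $q^2+1$ is a cap''. This is established in \cite{bruen1982hyperplane} through a counting argument: minimality forces each point of $\mathcal{S}$ to lie on some tangent plane, and estimating, for a point lying on a secant of $\mathcal{S}$, the number of tangent planes through it against the total number of planes of $\PG(3,q)$ shows that the existence of any secant would force $|\mathcal{S}|<q^2+1$. As this is exactly the content summarized in the excerpt, in the written proof I would simply cite \cite[Theorem 1]{bruen1982hyperplane} to conclude that the extremal configuration is an ovoid, and then apply the Barlotti--Panella classification; the remaining argument is purely a matter of assembling these references.
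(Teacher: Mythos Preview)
Your proposal is correct and matches the paper's treatment exactly: the paper does not give an independent proof but simply records that a minimal blocking set of $\PG(\F_q^4)$ of size $q^2+1$ is an ovoid by \cite{bruen1982hyperplane}, and that for $q$ odd every ovoid is an elliptic quadric by \cite{barlotti1955estensione,panella1955caratterizzazione}. Your write-up reproduces this chain of citations, with some additional commentary on the cap/ovoid step and the necessity of the oddness hypothesis.
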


Note also that the elliptic quadric of $\PG(\F_{q}^{4})$ are classified, see e.g. \cite[p. 123]{hirschfeld1998projective}.

\begin{proposition} 
    An elliptic quadric in $\PG(\F_{q}^{4})$, up to $\mathrm{PGL}(4,q)$-equivalence, has equation 
    \[
    f(X_1,X_2)+X_2X_3=0,
    \]
    where $f(X_{1}, X_{2}) \in \F_q[X_{1}, X_{2}]$ is an irreducible polynomial.
\end{proposition}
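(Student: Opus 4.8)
The plan is to reduce the statement to the Witt decomposition of quadratic forms over a finite field, using the standard dictionary between quadrics of $\PG(\F_q^4)$ and quadratic forms in four variables over $\F_q$. Recall that, $q$ being odd, a quadric $\mathcal{Q}$ of $\PG(\F_q^4)$ is the set of projective points on which a non-zero quadratic form $Q(X_1,X_2,X_3,X_4)$ vanishes; two forms define the same quadric exactly when they are proportional, and an element of $\mathrm{PGL}(4,q)$ carries $\mathcal{Q}$ to the quadric of the form obtained by the corresponding invertible linear substitution. Hence it suffices to show that the quadratic form attached to an elliptic quadric is, after an invertible linear change of coordinates and a rescaling, of the shape $f(X_1,X_2)+X_3X_4$ with $f$ an irreducible binary quadratic, and conversely that every such form defines an elliptic quadric.

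First I would recall the geometric characterisation of the elliptic quadric of $\PG(\F_q^4)$: it is the non-singular quadric containing no line, equivalently the non-degenerate quadratic form in four variables of Witt index $1$ (the hyperbolic quadric has Witt index $2$, and for $q$ odd these are the only two non-singular projective types in this dimension); see \cite{hirschfeld1998projective}. Then I would invoke the Witt decomposition theorem: over $\F_q$ every non-degenerate quadratic form splits, up to isometry, as an orthogonal sum of $i$ hyperbolic planes and an anisotropic form $Q_{\mathrm{an}}$, where $i$ is the Witt index and $Q_{\mathrm{an}}$ is determined up to isometry. Applied to our $Q$ (dimension $4$, Witt index $1$), this yields $Q$ isometric to $H\perp Q_{\mathrm{an}}$, where $H$ is the hyperbolic plane, realised after relabelling the coordinates as the monomial $X_3X_4$ of the normal form, and $Q_{\mathrm{an}}$ is a binary anisotropic form in $X_1,X_2$.

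Next I would identify binary anisotropic forms with irreducible polynomials. A binary quadratic form $f(X_1,X_2)=aX_1^2+bX_1X_2+cX_2^2$ over $\F_q$ is anisotropic, i.e. its only zero is $(0,0)$, precisely when it has no zero in $\PG(\F_q^2)$, i.e. does not factor into two $\F_q$-linear forms, i.e. is irreducible over $\F_q$; such forms exist, for instance the norm form of $\F_{q^2}/\F_q$. Although for $q$ odd there are two isometry classes of binary anisotropic forms, any two differ by a non-square scalar and hence define the same quadric, so every irreducible $f$ yields the same elliptic quadric up to $\mathrm{PGL}(4,q)$. Assembling the pieces, after a change of coordinates in $\GL(4,q)$ the form $Q$ becomes $f(X_1,X_2)+X_3X_4$, which descends to the claimed projective normal form; for the converse one checks that $X_3X_4$ contributes a hyperbolic plane and the irreducible $f$ an anisotropic plane, so $f(X_1,X_2)+X_3X_4$ is non-degenerate of Witt index exactly $1$, and its quadric is therefore elliptic.

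The only non-routine point, and the main obstacle, is to keep the whole argument at the level of quadrics, that is, of forms taken up to a non-zero scalar and up to $\mathrm{PGL}(4,q)$, rather than merely of quadratic forms up to isometry, so that the mild non-uniqueness of the anisotropic part over $\F_q$ is absorbed into the scalar and does not produce inequivalent normal forms; everything else is a matter of quoting the Witt decomposition and the equivalence ``non-singular and line-free $\iff$ elliptic $\iff$ Witt index $1$'' in exactly the form needed, citing \cite{hirschfeld1998projective} rather than reproving them.
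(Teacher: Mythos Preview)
The paper does not supply its own proof of this proposition: it is stated as a known classical fact with a reference to Hirschfeld's book, and no argument is given. Your sketch via the Witt decomposition is exactly the standard route taken in that reference and is correct in substance: identify the elliptic quadric with a non-degenerate quadratic form of Witt index~$1$, split off one hyperbolic plane, and observe that the anisotropic binary residue is precisely an irreducible homogeneous quadratic in two variables. Your handling of the passage from isometry classes of forms to $\mathrm{PGL}$-classes of quadrics (absorbing the non-square scalar that distinguishes the two anisotropic binary classes over $\F_q$) is the right subtlety to flag, and it can be made fully explicit by rescaling one of the hyperbolic coordinates.

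One remark: the displayed equation in the statement reads $f(X_1,X_2)+X_2X_3=0$, in which $X_4$ does not appear; taken literally this form is degenerate (its radical contains the fourth basis vector) and so cannot define an elliptic quadric. You have, reasonably, proved the intended statement with hyperbolic term $X_3X_4$, which is the canonical form actually found in Hirschfeld; the $X_2X_3$ in the paper is evidently a typographical slip.
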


For instance, let $f(X_{1}, X_{2}) = X_{1}^2-\alpha X_{2}^2$, with $\alpha \notin Q$. As a consequence, we have the following:

\begin{theorem}
$B$ be a minimal, $3$-dimensional, locally $q^{th}$ power set. Then, $\lvert B \rvert \leq q^2+1$. Moreover, $\lvert B \rvert=q^2+1$ if and only if $B$ is geometrically $q$-equivalent to the set
\[
\left\{ p_1^{v_{11}}p_2^{v_{21}}p_3^{v_{31}}p_4^{v_{41}}, \ldots,  p_1^{v_{1,q^2+1}}p_2^{v_{2,q^2+1}}p_3^{v_{3,q^2+1}}p_4^{v_{4,q^2+1}}\right\},
\]
where
$\{\langle (v_{1j},v_{2j},v_{3j},v_{4j}) \rangle_{\F_q}\big\}_{j=1}^{q^{2}+1}\subset\PG(\F_{q}^{4})$ is the set of $q^{2}+1$ points on an elliptic quadric.
\end{theorem}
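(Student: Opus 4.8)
The plan is to run everything through the dictionary between locally $q^{th}$ power sets and blocking sets from Section~\ref{section:main}, reduce to the case of exactly four primes so that the results collected in this section apply verbatim, and then read the equality case off the classification of maximal blocking sets of $\PG(\F_q^4)$ as elliptic quadrics. Concretely, let $B$ be a minimal, $3$-dimensional, locally $q^{th}$ power set, with associated point set $\mathcal{S} \subseteq \PG(\F_q^k)$, where $k$ is the number of distinct primes dividing the elements of $\pi_q(B)$. Since $B$ is $3$-dimensional we have $k \geq 4$; applying Proposition~\ref{prop:reduceprimelength} with $r = 4$ and any four distinct primes $p_1,p_2,p_3,p_4$ produces a geometrically $q$-equivalent set $B'$ built from exactly these four primes. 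By Theorem~\ref{th:propertygeometricequiv} and Corollary~\ref{minimalblockingandminimalpowerset}, $B'$ is again a minimal locally $q^{th}$ power set with $|B'| = |B|$, and its associated point set $\mathcal{S}' \subseteq \PG(\F_q^4)$ is a minimal blocking set spanning $\PG(\F_q^4)$, i.e. a $3$-dimensional minimal blocking set, with $|\mathcal{S}'| = |B|$. Thus we may assume from the start that $k = 4$.

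The bound is then immediate: by the upper bound for $3$-dimensional minimal blocking sets of $\PG(\F_q^4)$ recalled above (the $k = 4$ case), $|B| = |\mathcal{S}| \leq q^2+1$. For the equality case, suppose $|B| = q^2+1$. Then $\mathcal{S}$ is a $3$-dimensional minimal blocking set of $\PG(\F_q^4)$ of maximal size, hence an elliptic quadric by the classification recalled above. Since all elliptic quadrics of $\PG(\F_q^4)$ are $\mathrm{PGL}(4,q)$-equivalent, for instance to the quadric of equation $X_1^2-\alpha X_2^2 + X_2X_3 = 0$ with $\alpha$ a non-square (which has the form $f(X_1,X_2)+X_2X_3=0$ with $f$ irreducible), there is $\Psi \in \mathrm{PGL}(4,q)$ carrying $\mathcal{S}$ onto a fixed set $\{\langle (v_{1j},v_{2j},v_{3j},v_{4j})\rangle_{\F_q} \}_{j=1}^{q^2+1}$ of points of an elliptic quadric. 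Because $B$ and the integer set displayed in the statement are both described by exactly the four primes $p_1,\ldots,p_4$, the existence of such a $\Psi$ is precisely the assertion that $B$ is geometrically $q$-equivalent to $\{ p_1^{v_{1j}}p_2^{v_{2j}}p_3^{v_{3j}}p_4^{v_{4j}} : 1 \leq j \leq q^2+1 \}$, which gives one implication.

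For the converse, if $B$ is geometrically $q$-equivalent to that set, then its associated point set is $\mathrm{PGL}(4,q)$-equivalent to an elliptic quadric of $\PG(\F_q^4)$; an elliptic quadric is not contained in any plane and meets every plane (in a point or a conic), so it is a minimal blocking set of $\PG(\F_q^4)$ of size $q^2+1$ spanning the whole space. By Theorem~\ref{th:propertygeometricequiv} and Corollary~\ref{minimalblockingandminimalpowerset}, $B$ is then a minimal locally $q^{th}$ power set with $|B| = q^2+1$, and it is $3$-dimensional. The only genuinely delicate point is the bookkeeping of the first step: one must check that the $3$-dimensionality hypothesis really allows the reduction to four primes while preserving minimality and cardinality, and that once both sets live on the same four primes, geometric $q$-equivalence is literally $\mathrm{PGL}(4,q)$-equivalence of the associated point sets; the remaining steps are direct invocations of the blocking-set results recalled in this section.
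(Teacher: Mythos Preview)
Your proof is correct and follows exactly the route the paper intends: the theorem is stated there simply as a consequence of the preceding propositions (the upper bound $|\mathcal{S}|\leq q^2+1$ for $k=4$, the identification of the equality case with an elliptic quadric, and the $\mathrm{PGL}(4,q)$-uniqueness of elliptic quadrics), and you have written out precisely that deduction together with the dictionary from Section~\ref{section:main}. Your added reduction to $k=4$ via Proposition~\ref{prop:reduceprimelength}, and your explicit treatment of the converse direction, are details the paper leaves to the reader; they are handled correctly.
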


\section*{Acknowledgement}\footnotesize
The authors are very grateful to the reviewer for their invaluable comments and suggestions, which have significantly improved the exposition of this article. The second author research was partially supported by the Italian National Group for Algebraic and Geometric Structures and their Applications (GNSAGA - INdAM) and by the INdAM - GNSAGA Project \emph{Tensors over finite fields and their applications}, number E53C23001670001 and by Bando Galileo 2024 – G24-216.

\bibliographystyle{abbrv}
\bibliography{blocking}

\begin{thebibliography}{10}

\bibitem{barlotti1955estensione}
A.~Barlotti.
\newblock Un'estensione del teorema di {S}egre-{K}ustaanheimo.
\newblock {\em Bollettino dell'Unione Matematica Italiana}, 10(4):498--506, 1955.

\bibitem{blokhuis1994size}
A.~Blokhuis.
\newblock On the size of a blocking set in {PG}$(2, p)$.
\newblock {\em Combinatorica}, 14:111--114, 1994.

\bibitem{blokhuis2011blocking}
A.~Blokhuis, P.~Sziklai, and T.~Szonyi.
\newblock Blocking sets in projective spaces.
\newblock {\em Current research topics in Galois geometry}, pages 61--84, 2011.

\bibitem{bose1966characterization}
R.~C. Bose and R.~Burton.
\newblock A characterization of flat spaces in a finite geometry and the uniqueness of the hamming and the macdonald codes.
\newblock {\em Journal of Combinatorial Theory}, 1(1):96--104, 1966.

\bibitem{bruen1982hyperplane}
A.~A. Bruen and J.~A. Thas.
\newblock Hyperplane coverings and blocking sets.
\newblock {\em Mathematische Zeitschrift}, 181(3):407--409, 1982.

\bibitem{coolsaet2022classification}
K.~Coolsaet, A.~Botteldoorn, and V.~Fack.
\newblock Classification of minimal blocking sets in small desarguesian projective planes.
\newblock {\em Journal of Combinatorial Designs}, 30(8):561--580, 2022.

\bibitem{filaseta1989sets}
M.~Filaseta and D.~Richman.
\newblock Sets which contain a quadratic residue modulo p for almost all p.
\newblock {\em Math. J. Okayama Univ}, 39:1--8, 1989.

\bibitem{fried1969arithmetical}
M.~Fried.
\newblock Arithmetical properties of value sets of polynomials.
\newblock {\em Acta Arithmetica}, 15(2):91--115, 1969.

\bibitem{heim1996blockierende}
U.~Heim.
\newblock Blockierende mengen in endlichen projektiven r{\"a}umen.
\newblock {\em Dissertation, Justus-Liebig-Universit{\"a}t Giessen, Giessen}, 1996.

\bibitem{hirschfeld1998projective}
J.~Hirschfeld.
\newblock {\em Projective geometries over finite fields. Oxford mathematical monographs}.
\newblock Oxford University Press New York, 1998.

\bibitem{mishra2023prime}
B.~Mishra.
\newblock Prime power residue and linear coverings of vector space over $\fq$.
\newblock {\em Finite Fields and Their Applications}, 89:102199, 2023.

\bibitem{panella1955caratterizzazione}
G.~Panella.
\newblock Caratterizzazione delle quadriche di uno spazio (tridimensionale) lineare sopra un corpo finito.
\newblock {\em Bollettino dell'Unione Matematica Italiana}, 10(4):507--513, 1955.

\bibitem{SS}
A.~Schinzel and M.~Skałba.
\newblock On power residues.
\newblock {\em Acta Arithmetica}, 108(1):77--94, 2003.

\bibitem{skalba2005sets}
M.~Skałba.
\newblock On sets which contain a $ q^{th}$ power residue for almost all prime modules.
\newblock {\em Colloquium Mathematicum}, 102:67--71, 2005.

\bibitem{szHonyi2005large}
T.~Sz{\H{o}}nyi, A.~Cossidente, A.~G{\'a}cs, C.~Mengy{\'a}n, A.~Siciliano, and Z.~Weiner.
\newblock On large minimal blocking sets in {PG}(2, q).
\newblock {\em Journal of Combinatorial Designs}, 13(1):25--41, 2005.

\bibitem{tallini1991blocking}
G.~Tallini.
\newblock Blocking sets with respect to planes in {PG}(3, q) and maximal spreads of a non-singular quadric in {PG}(4, q).
\newblock {\em Mitt. Math. Sem. Giessen}, 201:141--147, 1991.

\end{thebibliography}

\medskip

Bhawesh Mishra \\
Department of Mathematical Sciences, \\
384 Dunn Hall, University of Memphis,\\
Memphis, TN 38107, USA \\
{{\em bmishra1@memphis.edu}}

\medskip

Paolo Santonastaso\\
Dipartimento di Matematica e Fisica,\\
Universit\`a degli Studi della Campania ``Luigi Vanvitelli'',\\
I--\,81100 Caserta, Italy\\
{{\em paolo.santonastaso@unicampania.it}}

\end{document}